\documentclass[11pt,letterpaper]{amsart}
\usepackage{latexsym,amsthm,amsmath,amssymb,mathrsfs,mathtools}
\usepackage[T1]{fontenc}
\usepackage[utf8]{inputenc}
\usepackage[mathscr]{eucal}
\usepackage{enumerate}
\usepackage{enumitem}
\usepackage{color}
\usepackage{ esint }
\usepackage[colorlinks	=	true,						
          	 	linkcolor	=	red,
            	urlcolor	=	red,
            	citecolor	=	red]%
            	{hyperref}
\usepackage{cleveref}

\usepackage{todonotes}								

\usepackage{tikz}
\usetikzlibrary{decorations.pathmorphing}	
\usepackage{xcolor}
\usepackage{graphicx}
\usetikzlibrary{calc} 

\newtheorem{theorem}{Theorem}[section]
\newtheorem{lemma}[theorem]{Lemma}
\newtheorem{corollary}[theorem]{Corollary}
\newtheorem{proposition}[theorem]{Proposition}
\theoremstyle{definition}
\newtheorem{remark}[theorem]{Remark}
\newtheorem{definition}[theorem]{Definition}
\newtheorem{example}[theorem]{Example}


\newcommand{\hatc}{\hat{\mathbb{C}}}

\def\diam{\operatorname{diam}}
\def\card{\operatorname{card}}
\def\dist{\operatorname{dist}}

\def\mod{\operatorname{mod}}


\DeclareMathOperator{\Mod}{mod}

\title{Conformal Uniformization of domains bounded by quasitripods} 
\author{Behnam Esmayli, Kai Rajala}
\address{B.\ Esmayli: Department of Mathematical Sciences, P.O. Box 210025, University of Cincinnati, Cincinnati, OH 45221–0025, U.S.A. {\tt esmaylbm@ucmail.uc.edu }} %
\address{K.\ Rajala: Department of Mathematics and Statistics, University of Jyv\"askyl\"a, P.O. Box 35 (MaD), FI-40014, University of Jyv\"askyl\"a, Finland. {\tt kai.i.rajala@jyu.fi}}

\thanks{
\newline {\it 2020 Mathematics Subject Classification.}  30C20, 30C35}
\thanks{{B.E.\ was supported by the Research Council of Finland, project numbers 321896 and 323960. This research started when B.E.\ was a postdoctoral researcher at the University of Jyv\"askyl\"a. K.R. was supported by the Research Council of Finland, project number 360505}}
\begin{document}
\begin{abstract}
We prove \emph{Koebe's conjecture} and a version of  \emph{Schramm's cofat uniformization theorem} for domains $\Omega \subset \hatc$ satisfying conditions involving \emph{quasitripods}, i.e., quasisymmetric images of the standard tripod. If the non-point  complementary components of $\Omega$ contain uniform quasitripods with large diameters and satisfy a \emph{packing condition}, then there exists a conformal map $f\colon\Omega \to D$ onto a circle domain $D$. Moreover, $f$ preserves the classes of point-components and non-point components. The packing condition is satisfied if $\Omega$ is \emph{cospread}, i.e., if the complementary components contain uniform quasitripods in all scales. 
\end{abstract}
\maketitle
\section{Introduction}
The Riemann mapping theorem asserts that every simply connected proper subdomain $\Omega \subsetneq \mathbb{C}$ can be conformally mapped onto the unit disk. Koebe provided one of the earliest complete proofs of the theorem and explored its extensions to \emph{multiply connected domains}. 

In \cite{Koe:08}, he conjectured that every domain in the Riemann sphere $\hatc$ is conformally equivalent to a \emph{circle domain}, i.e., a domain whose boundary components are either points or circles. In \cite{Koe:20}, he proved the conjecture for all finitely connected domains. One remarkable feature of Koebe's theorem and his conjecture is that they impose no regularity assumptions on the geometry of the complementary components.

In their breakthrough work, He and Schramm~\cite{HS:93} proved Koebe's conjecture for countably connected domains. Shortly thereafter, Schramm~\cite{Sch:95} 
extended the result to uncountably connected domains whose complementary components are uniformly \emph{fat} (i.e., \emph{Ahlfors $2$-regular}).  

In this paper we prove Koebe's conjecture for domains whose complementary components are \emph{spread} and satisfy a \emph{packing condition}. Let us fix some notation before stating our main results.

Given a domain $G \subset \hatc$, we call a connected component $p$ of $\hatc \setminus G$ \emph{non-trivial} and denote $p \in \mathcal{C}_N(G)$ if $\diam(p \cap \mathbb{C})>0$\footnote{We denote by $\diam(A)$ and $\operatorname{Area}(A)$ the Euclidean diameter and Lebesgue measure of $A \subset \mathbb{C}$, resp. }. Otherwise we call $p$ a \emph{point-component} and denote $p \in \mathcal{C}_P(G)$. Let $\hat{G}=\hatc / \sim$, where 
$$
z \sim w \text{ if either } z=w \in G \text{ or } z,w \in p \text{ for some } p \in \mathcal{C}(G):=\mathcal{C}_N(G) \cup \mathcal{C}_P(G). 
$$
We equip $\hat{G}$ with the quotient. The quotient map is $\pi_G\colon \hatc \to \hat{G}$. By Moore's theorem (see ~\cite[page 3]{Dav:86}), $\hat{G}$ is homeomorphic to $\hatc$.

Every homeomorphism $f\colon G \to G'$ has a unique homeomorphic extension $\hat{f}\colon \hat{G} \to \hat{G'}$. By abuse of notation, we do not make a distinction between $p \in \mathcal{C}(G)$ and $\pi_G(p) \in \hat{G}$. 

Recall that $A \subset \hatc$ is \emph{$\tau$-fat} if for every $z_0 \in A \cap \mathbb{C}$ and every disk $\mathbb{D}(z_0,r)$ that does not contain $A$ we have $\operatorname{Area}(A \cap \mathbb{D}(z_0,r)) \geq \tau r^2$. A domain $\Omega \subset \hatc$ is \emph{cofat} if there is $\tau>0$ so that every $p \in \mathcal{C}_N(\Omega)$ is $\tau$-fat. %

\begin{theorem}[\cite{Sch:95}] \label{renttuihin}
Let $\Omega \subset \hatc$ be a cofat domain. Then there is a conformal map $f\colon\Omega \to D$ onto a circle domain $D$. Moreover, 
$\hat{f}(\mathcal{C}_N(\Omega))=\mathcal{C}_N(D)$ and $\hat{f}(\mathcal{C}_P(\Omega))=\mathcal{C}_P(D)$.  
\end{theorem}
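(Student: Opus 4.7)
The plan is to prove Theorem \ref{renttuihin} by an exhaustion–and–compactness argument, producing the conformal map $f$ as a subsequential limit of uniformizations of finitely connected subdomains, with \emph{fatness} entering as the crucial estimate that prevents degeneration.

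\textbf{Step 1 (Exhaustion and finitely connected uniformization).} First I would enumerate $\mathcal{C}_N(\Omega)=\{p_1,p_2,\dots\}$ (using that there can only be countably many non-trivial complementary components, since each has positive area) and set
\[
\Omega_n := \hatc \setminus \bigcup_{k=1}^{n} p_k.
\]
Each $\Omega_n$ is a finitely connected domain bounded by continua, so the classical Koebe uniformization theorem yields a conformal map $f_n\colon \Omega_n \to D_n$ onto a circle domain $D_n\subset\hatc$. I would normalize the $f_n$ by a three-point condition (e.g., fixing $0,1,\infty\in\Omega$ and $f_n(\infty)=\infty$, $f_n(0)=0$, $f_n(1)=1$, after a preliminary Möbius change of variables).

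\textbf{Step 2 (Compactness via transboundary modulus).} The heart of Schramm's argument is to establish pre-compactness of $\{f_n\}$ and of the images $D_n$ under the cofatness hypothesis. For this I would introduce \emph{transboundary extremal length} (or equivalently transboundary modulus) on $\Omega$: a variant of modulus in which one measures curve families by integrating a density $\rho$ on $\Omega$ and adding discrete weights $\rho(p)$ to each complementary component a curve crosses. The key features to exploit are: (i)~transboundary modulus is a conformal invariant in the appropriate sense (invariant under conformal maps that respect the partition into components and fill–ins); (ii)~for circle domains it is comparable to the classical modulus of the corresponding curve family in the sphere; and (iii)~when every non-trivial component is $\tau$-fat, the weight on a component $p$ can be compared with the $L^2$-mass $\int_p \rho^2$, so the discrete part is controlled by the continuous part with a constant depending only on $\tau$. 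Combining these, I would show that diameters and pairwise distances of the complementary disks of $D_n$ are bounded below in terms of the corresponding data of the $p_k$, and bounded above by the normalization. This yields normality of $\{f_n\}$ and a Carathéodory-type limit circle domain $D$.

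\textbf{Step 3 (Passing to the limit).} Extracting a locally uniformly convergent subsequence $f_{n_j}\to f$ on $\Omega$, I would verify that $f$ is holomorphic and non-constant; injectivity follows from Hurwitz's theorem once I rule out collapse. This is where fatness is used again: the lower bound on the diameter of the image disk $f_{n_j}(p_k)$, uniform in $j$, ensures that distinct non-trivial components of the limit configuration stay distinct and have positive diameter. Consequently $f(\Omega)\subset D$ is open, $f$ is conformal onto its image, and $\hat{f}$ sends each $p_k\in\mathcal{C}_N(\Omega)$ to a genuine disk in $\mathcal{C}_N(D)$. Point components must go to point components, since otherwise a non-trivial image component would have no preimage non-trivial component, violating the bijectivity of $\hat{f}$ on complementary components induced by the homeomorphic extension.

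\textbf{Main obstacle.} The step I expect to be hardest is Step 2: setting up transboundary modulus and proving the two-sided comparison between the discrete weights and the continuous density in the presence of fatness, together with the stability of these estimates under the passage $\Omega_n\to\Omega$. Everything else is either classical (Koebe for finitely connected domains, Hurwitz, normal families) or a consequence of the modulus bounds. The fatness hypothesis is essentially sharp here: without it, a sequence of disks in $D_n$ can shrink to points in the limit, and the limit map fails to be a uniformization onto a circle domain.
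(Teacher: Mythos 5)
Your overall strategy — exhaust $\Omega$ by finitely connected $\Omega_n$, uniformize each via the classical Koebe theorem, normalize, and extract a limit whose non-degeneracy is secured by transboundary modulus estimates using cofatness — is exactly Schramm's approach, and indeed the same scheme the present paper adapts in its proof of Theorem \ref{tripodkoebe}. You also correctly identify Step 2 as the crux: cofatness gives $\sum_p \diam(p)^2 \lesssim \sum_p \operatorname{Area}(p)$, which supplies the $\ell^2$-summability that lets one treat the discrete weights as if they were part of the $L^2$-mass of the admissible density.

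However, the argument you give for $\hat{f}(\mathcal{C}_P(\Omega)) = \mathcal{C}_P(D)$ in Step 3 has a genuine gap. You write that point components must go to point components because otherwise a non-trivial image component would have no non-trivial preimage, contradicting bijectivity. But $\hat{f}$ being a bijection on complementary components does not rule out a point component $\{a\}$ mapping to a set of positive diameter: the preimage of that non-trivial target \emph{is} the point $\{a\}$ — there is no contradiction with bijectivity, and the forward implication ``non-trivial $\Rightarrow$ non-trivial'' does not give the reverse implication ``point $\Rightarrow$ point.'' This is not a hypothetical concern; the paper constructs in Section \ref{sec:examples} a (non-cofat) domain where $\hat f$ of a point component is non-trivial, and flags the subtlety in a footnote in Section \ref{sec:Simon}. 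The correct mechanism is a transboundary modulus \emph{upper} bound localized at $a$ (the analogue of \eqref{upperbound} here): one builds a nested sequence of annuli centered at $a$, each contributing bounded transboundary modulus to paths joining their boundary circles, so that the modulus of paths from a fixed Jordan curve to a small circle around $a$ tends to zero; pushed forward by $f$ into the circle domain and compared with a lower bound (Proposition \ref{circlemodulus}(1)), this forces $\diam(\hat f(\{a\}))=0$. Cofatness enters in proving the local annulus estimate, just as it does for the lower bounds on image disk diameters. Without this explicit modulus argument, Step 3 is incomplete.
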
 

Theorem \ref{renttuihin} and its proof involving Schramm's \emph{transboundary modulus} have been applied to solve a variety of uniformization problems in Euclidean and metric spaces, see e.g. \cite{Bon11,Mer12,BonMer13,Nta23a}. Towards further applications, it is desirable to find minimal assumptions under which the conclusions of Theorem \ref{renttuihin} hold. In this paper we consider conditions involving \emph{tripods} and \emph{quasisymmetries}. Recall that a homeomorphism $\phi \colon  E \to F$ between subsets of 
$\mathbb{C}$ is \emph{weakly} \emph{$H$-quasisymmetric}, where $H$ is a constant, if for all $z_1,z_2,z_3 \in E$ satisfying  $|z_2-z_1|\leq |z_3-z_1|$, we have
$$
|\phi(z_2)-\phi(z_1)| \leq H|\phi(z_3)-\phi(z_1)| . 
$$ 
The \emph{standard tripod} $T_0 \subset \mathbb{C}$ is the union of segments $[0,e^{i \cdot 2j\pi / 3}]$, $j=0,1,2$. 
\begin{definition}
 We call $T \subset \mathbb{C}$ an \emph{$H$-quasitripod} if there is a weakly $H$-quasisymmetric homeomorphism $\phi \colon T_0 \to T$. 
\end{definition} 
Our main result reads as follows. 
\begin{theorem} \label{tripodkoebe}
Let $\Omega \subset \hatc$ be a domain containing $\infty$. Suppose that there are $H, N\geq 1$ so that 
\begin{itemize}
\item[(i)] every $p \in \mathcal{C}_N(\Omega)$ contains an $H$-quasitripod $T$ with 
$$
\diam(T) \geq \diam(p)/H ,
$$ 
\item[(ii)] $\card \{p \in \mathcal{C}_N(\Omega): \,\diam(p) \geq r, \,  p \cap \mathbb{D}(z_0,r) \neq \emptyset \} \leq N$ for every $z_0 \in \mathbb{C}$ and $r>0$. 
\end{itemize}
Then there exists a conformal homeomorphism $f\colon\Omega \to D$ onto a circle domain $D$. Moreover, 
\begin{equation} \label{claimi2}
\hat{f}(\mathcal{C}_N(\Omega))=\mathcal{C}_N(D) \quad \text{and} \quad  \hat{f}(\mathcal{C}_P(\Omega))=\mathcal{C}_P(D). 
\end{equation}
\end{theorem}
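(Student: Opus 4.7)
The plan is to follow the scheme Schramm uses to prove Theorem~\ref{renttuihin}, replacing his area-based (fatness) modulus bounds by estimates driven by the tripod condition (i) and the packing condition (ii). The first step is to exhaust $\Omega$ by finitely connected subdomains: enumerate $\mathcal{C}_N(\Omega)=\{p_1,p_2,\dots\}$ in order of decreasing diameter and let $\Omega_n$ denote the component of $\hatc\setminus\bigcup_{i\le n}p_i$ containing $\infty$. The classical Koebe uniformization theorem for finitely connected domains produces a conformal map $f_n\colon\Omega_n\to D_n$ onto a circle domain, normalized by $f_n(\infty)=\infty$ and $f_n'(\infty)=1$. The goal is then to extract a subsequential locally uniform limit $f_n\to f$ and prove that $f\colon\Omega\to D$ is the required conformal map onto a circle domain satisfying \eqref{claimi2}.

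The heart of the proof is a uniform estimate for $\{f_n\}$ via Schramm's transboundary modulus on $\hat\Omega$, which assigns mass both to area on $\Omega$ and to each non-trivial complementary component. The crucial step, substituting for fatness in \cite{Sch:95}, is a lower bound on the transboundary contribution of a component $p$ that uses only its quasitripod $T\subset p$. The underlying idea is that the threefold branching of the standard tripod $T_0$ forces any curve in a surrounding annulus that tries to separate one branch of $T$ from the other two to make a quantitative detour, and weak $H$-quasisymmetry transfers this topological-metric obstruction into a universal modulus bound depending only on $H$ and on the ratio $\diam(T)/\diam(p)\ge 1/H$. Summing over components intersecting a fixed annulus, where (ii) enforces bounded multiplicity at every scale, yields uniform upper bounds on transboundary moduli, and hence equicontinuity of $\{f_n\}$ on compact subsets of $\Omega$ in the spherical metric, uniform positive lower bounds on the diameters of the round disks $B_i^n:=f_n(p_i)$, and uniform separation between distinct $B_i^n$ and $B_j^n$ relative to their diameters.

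With these estimates in hand, Arzel\`a--Ascoli together with Hurwitz's theorem produces a univalent conformal limit $f\colon\Omega\to D$, and the round disks $B_i^n$ converge to genuine round disks $B_i$ of positive diameter that are identified with $\hat{f}(p_i)$, so that $D$ is a circle domain. The preservation \eqref{claimi2} follows: each non-point component maps to a round disk of positive diameter by the uniform lower bound, while no non-trivial complementary component of $D$ can arise from a point component of $\Omega$, since a single point cannot absorb the transboundary modulus required to produce a disk of positive diameter. The main obstacle, and the principal new ingredient of the paper, is the tripod-based transboundary modulus bound itself: in the fat case the area inequality instantly controls modulus, whereas for a quasitripod of zero two-dimensional measure the bound must be extracted from a purely topological-metric argument exploiting the three-branch structure of $T_0$ under weak quasisymmetry. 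Producing such an inequality with a clean constant depending only on $H$, and threading it into the transboundary framework that yields the equicontinuity estimates above, is where the bulk of the technical difficulty is expected to lie.
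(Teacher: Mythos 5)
Your scaffold is the right one and matches the paper: exhaust $\Omega$ by finitely connected $\Omega_n$, apply the finite-connectivity Koebe theorem to get $f_n\colon\Omega_n\to D_n$, extract a normalized subsequential limit $f$, and use transboundary modulus estimates — upper bounds on the $\Omega$ side against lower bounds on the circle-domain side, via conformal invariance — to establish that the limit is a conformal homeomorphism onto a circle domain and that \eqref{claimi2} holds. Your intuition about the mechanism for the upper bound is also correct in outline: a path that approaches the branch point of a quasitripod without crossing it must take a detour of length comparable to the local diameter, and quasisymmetry makes this quantitative.

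However, the step where you close the argument is where a genuine gap lies. You assert that \emph{summing over components intersecting a fixed annulus, where (ii) enforces bounded multiplicity at every scale, yields uniform upper bounds on transboundary moduli}. In Schramm's cofat case this works because fatness gives $\diam(p)^2\lesssim\operatorname{Area}(p\cap\mathbb{D}(z_0,r))$, so the squared diameters are $\ell^2$-summable and the natural admissible metric (assigning $\rho(p)\approx\diam(p)$ to every complementary component meeting the annulus) has bounded mass. Quasitripods have measure zero, and as the paper notes, Conditions (i) and (ii) do \emph{not} imply $\ell^2$-summability of the diameters: (ii) bounds the count of components of size $\geq r$ near any point, but summed across all scales the naive mass can diverge, and it must be controlled uniformly in $n$ as the approximations $\Omega_n$ acquire more and more components. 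The paper's actual resolution (Theorem \ref{thm:mainestimate} and Section 4) is quite different from a per-component sum: complementary components smaller than a fixed threshold are recursively partitioned into \emph{good} sets $G_k$ and \emph{bad} sets $B_k$ clustered near representatives $p_k$; the admissible function assigns positive mass only on $\Omega$ and on the good components $G$, with $\rho\equiv 0$ on the bad ones; admissibility is established through a careful interval-splitting bookkeeping (Proposition \ref{goodpathsprop}) which shows that the detour cost incurred in skirting the bad components, combined with the diameters picked up on good components, compensates the length lost; and only \emph{then} is the packing condition invoked — not to bound a raw sum over all components, but to bound $\sum_{p\in G}\diam(p)^2$ via a disjointness argument for the disks $\mathbb{D}(a_k,\tau r_k)$. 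Without this good/bad decomposition, or something playing its role, the ``just sum using (ii)'' step does not go through, and this is in fact the new technical core of the paper rather than a routine adaptation.
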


The proof of Theorem \ref{tripodkoebe} relies on transboundary modulus estimates which are significantly more involved than the estimates on cofat domains. The difficulty is that, unlike cofatness, Conditions (i) and (ii) do not imply $\ell^2$-bounds for the diameters of the elements in $\mathcal{C}_N(\Omega)$ (see Example \ref{elltwoexample}). Neither Condition (i) nor Condition (ii) alone guarantees \eqref{claimi2}; see Section \ref{sec:examples}. 

We now introduce a local version of Condition (i) which leads to a M\"obius invariant class of domains that satisfy the conclusions of Theorem \ref{tripodkoebe}. 

\begin{definition}
We say that $A \subset \hatc$ is \emph{$H$-spread} if for every $z_0 \in A \cap \mathbb{C}$ and every $0<r < \diam(A \cap \mathbb{C})$ there is an $H$-quasitripod 
$T \subset A \cap \mathbb{D}(z_0,r)$ with $\diam(T) \geq r/H$. A domain $\Omega \subset \hatc$ is \emph{$H$-cospread} if every 
$p \in \mathcal{C}_N(\Omega)$ is $H$-spread, and \emph{cospread} if $\Omega$ is $H$-cospread for some $H$. 
\end{definition}

The class of cospread domains includes the continuum self-similar trees and uniformly branching trees considered by Bonk-Tran \cite{BonTra21} and Bonk-Meyer \cite{BonMey22}, respectively. 

\begin{proposition} \label{cospreadcon}
Let $\Omega \subset \hatc$ be an $H$-cospread domain. Then Conditions (i) and (ii) in Theorem \ref{tripodkoebe} hold with $H$ and $N=N(H)$. 
Moreover, if $\phi \colon \hatc \to \hatc$ is $\alpha$-quasi-M\"obius then $\phi(\Omega)$ is $H'$-cospread, where $H'$ depends only on $H$ and $\alpha$. 
\end{proposition}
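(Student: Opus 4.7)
My plan is to prove the three assertions of the proposition in turn: the quasitripod existence (i), the packing condition (ii), and the quasi-M\"obius invariance of cospread. The first is essentially immediate from $H$-spread, (ii) is the main technical content, and the last follows by routine distortion estimates.

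For (i), fix $p \in \mathcal{C}_N(\Omega)$ and set $d = \diam(p \cap \mathbb{C}) > 0$. I would pick any $z_0 \in p \cap \mathbb{C}$ and a sequence $r_k \nearrow d$. The $H$-spread condition at $z_0$ with radius $r_k$ yields $H$-quasitripods $T_k \subset p \cap \mathbb{D}(z_0, r_k)$ with $\diam(T_k) \geq r_k/H$ and weakly $H$-quasisymmetric parametrizations $\phi_k \colon T_0 \to T_k$. Since $T_0$ is compact and uniformly perfect, weak $H$-quasisymmetry upgrades to $\eta(H)$-quasisymmetry, so the family $\{\phi_k\}$ is equicontinuous with values in the compact set $\overline{\mathbb{D}(z_0, d)}$, and Arzel\`a--Ascoli produces a subsequential uniform limit $\phi \colon T_0 \to T \subset p$. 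Passing the weak qs inequality to the limit shows $\phi$ is weakly $H$-quasisymmetric, so $T$ is an $H$-quasitripod with $\diam(T) \geq d/H$, as required.

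For (ii), given $z_0 \in \mathbb{C}$, $r > 0$, and distinct components $p_1, \ldots, p_n \in \mathcal{C}_N(\Omega)$ of diameter at least $r$ meeting $\mathbb{D}(z_0, r)$, I would pick $z_i \in p_i \cap \mathbb{D}(z_0, r)$ and apply the $H$-spread property to $p_i$ at $z_i$ with scale $r/2$; this produces pairwise disjoint $H$-quasitripods $T_i \subset p_i \cap \mathbb{D}(z_0, 3r/2)$ with $\diam(T_i) \geq r/(2H)$. The task thereby reduces to the purely geometric assertion that the number of pairwise disjoint $H$-quasitripods of diameter at least $\rho$ inside a disk of radius comparable to $\rho$ is bounded by some $N(H)$. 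To establish this I would use that the weak qs inequality applied to the three vertices of $T_0$ forces the three endpoints of each $T_i$ to be pairwise separated by a definite fraction of $r$, and joined to an interior branch point by three pairwise disjoint arcs; a planar combinatorial argument on these $3n$ non-crossing arcs with uniformly separated endpoints then yields $n \leq N(H)$.

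For the quasi-M\"obius invariance I would rely on the standard distortion estimates for quasi-M\"obius maps: after an auxiliary M\"obius normalization sending $\phi^{-1}(\infty)$ to $\infty$, an $\alpha$-quasi-M\"obius homeomorphism is quasisymmetric on any fixed bounded portion of $\mathbb{C}$ with modulus depending only on $\alpha$, and consequently sends disks to quasi-balls with comparable radii and $H$-quasitripods to $H'$-quasitripods with $H' = H'(H, \alpha)$. Feeding these estimates into the definition of $H$-spread transfers the property to $\phi(p)$ for each $p \in \mathcal{C}_N(\Omega)$. The main obstacle of the whole proposition is the packing step in (ii): in contrast with the cofat setting, where Lebesgue area yields the bound at once, cospread sets may have zero area and Hausdorff dimension only slightly above $1$, so the packing bound cannot come from any measure-theoretic estimate and must instead be extracted from the planar Y-structure of the quasitripods.
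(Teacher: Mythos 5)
Your overall decomposition of the proposition (Condition (i), Packing Condition (ii), quasi-M\"obius invariance) matches the paper's, and your reduction for (ii) --- picking a point $z_i \in p_i \cap \mathbb{D}(z_0,r)$, applying $H$-spread at scale about $r$, and reducing to a bound on the number of pairwise disjoint $H$-quasitripods of diameter $\gtrsim r$ inside $\mathbb{D}(z_0, Cr)$ --- is exactly the paper's Lemma~\ref{th2}. Your Arzel\`a--Ascoli limit in (i) is a correct (if more elaborate than needed) way to pass from $r<\diam(p)$ to $r=\diam(p)$. However, there are two genuine gaps.

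The first gap is in (ii). You reduce to Lemma~\ref{th2} correctly, but then you only assert that ``a planar combinatorial argument on these $3n$ non-crossing arcs with uniformly separated endpoints yields $n\le N(H)$.'' This is precisely the hard part, and two things are missing. First, the endpoints are only uniformly separated \emph{within} each tripod, not across tripods, so one cannot directly appeal to a packing of separated points. Second, pairwise disjoint $Y$-graphs in a disk can be arbitrarily many topologically; the finiteness must come from an actual metric/combinatorial mechanism, not just from non-crossing. The paper's mechanism is: cover $\mathbb{D}(z_0,Mr)$ by $O((M/\delta)^2)$ disks $D_k$ of radius $\delta r$ with $\delta=\delta(H)$ small; group tripods by which $D_k$ contains the branch point; for tripods in one group, all three branches exit $B_k=2D_k$ and determine three arcs of $\partial B_k$, each of length $\ge\theta r$ with $\theta=\theta(H)$; disjointness forces that for any two such tripods, all three exit points of one lie in a single arc of the other; ordering the shortest arcs by length and running a greedy ``new arc adds $\theta r$ of new boundary'' induction bounds the cardinality by $\ell(\partial B_k)/(\theta r)\le 4\pi\delta/\theta$. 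Without identifying some concrete nesting mechanism of this kind, the combinatorial step is a hand-wave and the proposal does not prove the packing bound.

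The second gap is a circularity in the quasi-M\"obius invariance. You precompose $\phi$ with a M\"obius map $m$ so that $\phi\circ m$ fixes $\infty$ and is globally quasisymmetric; this is fine, but then $\phi(\Omega)=(\phi\circ m)(m^{-1}(\Omega))$, and to conclude $\phi(\Omega)$ is cospread you would already need to know that $m^{-1}(\Omega)$ is cospread --- i.e., M\"obius invariance of the property, which is part of what is being proved. (The paper postcomposes instead, which leaves the domain unchanged but forces them to handle the residual M\"obius factor separately.) The case that actually has content is the inversion $z\mapsto 1/z$, which is \emph{not} uniformly quasisymmetric near its pole; the paper handles it via Lemma~\ref{kissa} (local quasisymmetry on $\mathbb{D}(w_0,s)$ when $|w_0|\ge 2s$) together with a localization trick: if $z_0\in p$ is too close to $0$, replace it with a point $w_0\in p\cap\mathbb{S}(z_0,r/2)$ farther from $0$ and apply the spread condition in $\mathbb{D}(w_0,r/20)\subset\mathbb{D}(z_0,r)$. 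Your proposal simply invokes ``standard distortion estimates'' and never confronts the behavior near the pole of the inversion, so this part is not complete either.
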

In other words, requiring Condition (i) of Theorem~\ref{tripodkoebe} at all scales implies Condition (ii). 
The class of quasi-M\"obius maps, which we recall in Section \ref{sec:propoproof}, contains all M\"obius transformations. By Theorem \ref{tripodkoebe} and Proposition \ref{cospreadcon}, cospread domains admit conformal maps onto circle domains. 

\begin{corollary}\label{cskoebe} If $\Omega \subset \hatc$ is a cospread domain, then there is a conformal homeomorphism $f\colon\Omega \to D$ onto a circle domain $D$. Moreover, $\hat{f}(\mathcal{C}_N(\Omega))=\mathcal{C}_N(D)$ and 
$\hat{f}(\mathcal{C}_P(\Omega))=\mathcal{C}_P(D)$. 
\end{corollary}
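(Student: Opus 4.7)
The plan is to reduce Corollary \ref{cskoebe} directly to Theorem \ref{tripodkoebe} via Proposition \ref{cospreadcon}. The only subtlety is that Theorem \ref{tripodkoebe} assumes the domain contains $\infty$, whereas Corollary \ref{cskoebe} imposes no such restriction. This gap is bridged by a Möbius conjugation, which is harmless precisely because Proposition \ref{cospreadcon} records the quasi-Möbius invariance of the cospread condition.

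First, suppose $\infty \in \Omega$. Then Proposition \ref{cospreadcon} provides Conditions (i) and (ii) of Theorem \ref{tripodkoebe} with constants $H$ and $N=N(H)$. Applying Theorem \ref{tripodkoebe} yields a conformal homeomorphism $f\colon\Omega \to D$ onto a circle domain $D$ with $\hat{f}(\mathcal{C}_N(\Omega))=\mathcal{C}_N(D)$ and $\hat{f}(\mathcal{C}_P(\Omega))=\mathcal{C}_P(D)$.

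If $\infty \notin \Omega$, I pick any $z_0 \in \Omega$ and let $\phi \colon \hatc \to \hatc$ be a Möbius transformation with $\phi(z_0)=\infty$. Since every Möbius transformation is $\alpha$-quasi-Möbius with a universal $\alpha$, Proposition \ref{cospreadcon} shows that $\phi(\Omega)$ is $H'$-cospread with $H'=H'(H)$. The previous case then produces a conformal homeomorphism $g\colon\phi(\Omega)\to D$ onto a circle domain $D$ satisfying $\hat{g}(\mathcal{C}_N(\phi(\Omega)))=\mathcal{C}_N(D)$ and $\hat{g}(\mathcal{C}_P(\phi(\Omega)))=\mathcal{C}_P(D)$. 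Setting $f := g \circ \phi|_\Omega$ gives a conformal homeomorphism from $\Omega$ onto the circle domain $D$.

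The preservation property \eqref{claimi2} for $f$ follows from the analogous property of $g$ together with the fact that the self-homeomorphism $\phi$ of $\hatc$ preserves the point/non-point classification of complementary components: the image of a singleton is a singleton, and the image of a non-degenerate continuum is a non-degenerate continuum. There is no genuine obstacle; the corollary is a direct packaging of Theorem \ref{tripodkoebe} and Proposition \ref{cospreadcon}, and the Möbius normalization is only needed to place $\infty$ inside the domain so that the hypotheses of Theorem \ref{tripodkoebe} are literally met.
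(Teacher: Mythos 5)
Your proof is correct and follows the same route as the paper: the corollary is exactly Theorem~\ref{tripodkoebe} combined with Proposition~\ref{cospreadcon}. Your Möbius normalization to place $\infty$ inside the domain (justified by the quasi-Möbius invariance statement in Proposition~\ref{cospreadcon}) is a small detail the paper glosses over, and you handle it correctly.
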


In addition to the previously mentioned results, 
the works of He and Schramm on Koebe’s conjecture and the associated rigidity problem \cite{HeSch94} have inspired several recent developments. Koebe's conjecture has been established for \emph{Gromov-hyperbolic domains} in \cite{KarNta25}, and an approach using \emph{exhaustions} has been explored in \cite{Raj23, NtaRaj23}. Further rigidity results have been developed in \cite{You16, NtaYou20, Nta23b, Raj24}.

We finish the introduction by discussing possible extensions. First, our methods can be adapted to show that if every $p \in \mathcal{C}_N(\Omega)$ in Theorem \ref{tripodkoebe} or Corollary \ref{cskoebe} is the closure of a Jordan domain, then $f$ admits a homeomorphic extension $\bar{f}\colon \overline{\Omega} \to \overline{D}$. 

Another extension concerns versions of the \emph{Brandt-Harrington theorem} for infinitely connected domains, see \cite{Bra80,Har82,Sch:95,Sch96}. Although our results only concern circle domain targets, the estimates below and in the proof of \cite[Theorem 4.2]{Sch:95} suggest that they can be replaced in Theorem \ref{tripodkoebe} and Corollary \ref{cskoebe} with targets $D$ so that if $p \in \mathcal{C}_N(\Omega)$ then $\hat{f}(p) \in \mathcal{C}(D)$ is homothetic to a predetermined fat or spread set $q_p$. 

There are cofat domains that are not cospread and do not satisfy the Quasitripod Condition (i) in Theorem \ref{tripodkoebe}. The proof given below can be modified to show that Condition (i) in Theorem \ref{tripodkoebe} can be replaced with the requirement that ``every $p \in \mathcal{C}_N(\Omega)$ is uniformly fat or satisfies Condition (i)''; see Remark \ref{unifyremark}. It would be interesting to identify natural conditions that define a class of domains encompassing both cofat domains and the domains described in Theorem \ref{tripodkoebe}.

This paper is organized as follows. In Section \ref{sec:modulus} we recall the definition of Schramm's transboundary modulus. In Section \ref{sec:Simon} we state our main modulus estimate, Theorem \ref{thm:mainestimate}, for finitely connected domains satisfying the conditions of Theorem \ref{tripodkoebe}. We proceed to give the proof of Theorem \ref{tripodkoebe}, assuming Theorem \ref{thm:mainestimate} as well as the necessary modulus estimates on circle domains (Proposition \ref{circlemodulus}).

We prove Theorem \ref{tripodkoebe} by approximating $\Omega$ with a decreasing sequence of finitely connected domains $\Omega_j \supset \Omega$ satisfying $\mathcal{C}(\Omega_j) \subset \mathcal{C}_N(\Omega)$. Such an approach is standard and was also used by Schramm \cite{Sch:95}. 
Our new innovation and the main difficulty in the proof of Theorem \ref{tripodkoebe} is establishing Theorem \ref{thm:mainestimate}. The proof is given in Section \ref{sec:proofestimate}. 

Section \ref{sec:cmodulus} contains the proof of Proposition \ref{circlemodulus}, the modulus estimates on circle domains. See e.g. \cite{Sch:95,Bon11,Raj23} for similar estimates. In Section \ref{sec:examples}, we construct examples that illustrate the need for both conditions in Theorem \ref{tripodkoebe}. We prove Proposition \ref{cospreadcon} in Section \ref{sec:propoproof}. 

\section*{Acknowledgments} 
We are grateful to Hrant Hakobyan and Pietro Poggi-Corradini for valuable discussions, and to Chengxi Li and the anonymous referees for their insightful comments, which helped improve the presentation of this work.

\section{Transboundary modulus} \label{sec:modulus}
 Koebe's conjecture concerns conformal equivalence of domains, so it is natural to seek conformally invariant objects. The modulus of path families is one such invariant. The classical definition (see e.g. \cite[Ch. 7]{Hei:01}) concerns only paths \emph{within} the domain, meaning it has no discrete part. 

Schramm \cite{Sch:95} made the ingenious observation that one may allow paths to ``pass through'' the complementary components and, by a natural modification, obtain a modulus that is invariant under conformal homeomorphisms. We now define this modulus and recall its main properties, as it will be our main tool throughout. 

Recall that every (continuous) rectifiable path $\gamma$ defined from a compact interval into $\mathbb{C}$ has an arclength re-parameterization $\gamma_s\colon [0,\ell] \to \mathbb{C}$. Given a Borel function $\rho\colon \mathbb{C} \to [0,\infty]$ and a continuous path $\gamma$, we define 
$$
\int_\gamma \rho\, ds := \int_0^\ell \rho(\gamma_s(t))\, dt 
$$ 
if $\gamma$ is rectifiable, and $\int_\gamma \rho \, ds =\infty$ otherwise. 

For a path defined on an open interval, we define $\int_\gamma \rho\, ds $ to be the supremum of $\int_{\gamma'} \rho\, ds $ over all subpaths 
$\gamma'$ that are defined on compact intervals.

An elementary fact that we will use repeatedly is that if $\gamma\colon [a,b] \to \mathbb C$ is a path and $M>0$ a real number, then
$$
\int_\gamma M\, ds \ge M |\gamma(a)-\gamma(b)|. 
$$

Fix a domain $G \subset \hatc$. The \emph{transboundary modulus} $\Mod(\Gamma)$ of a family $\Gamma$ of paths in $\hat{G}$ is defined by
$$
\Mod(\Gamma)=\inf_{\rho \in X(\Gamma)} \int_{G\cap \mathbb{C}} \rho^2 \, dA + \sum_{p \in \mathcal{C}(G)} \rho(p)^2, 
$$
where $X(\Gamma)$ is the collection of \emph{admissible functions for $\Gamma$}, i.e., Borel functions $\rho\colon \hat{G}\to [0,\infty]$ for which 
$$
1 \leq \int_{\gamma} \rho \, ds +\sum_{p \in \mathcal{C}(G) \cap |\gamma|} \rho(p)  \quad \text{for all } \gamma \in \Gamma.  
$$ 
Here $|\gamma|$ denotes the image of the path $\gamma$ and $\int_{\gamma} \rho \, ds$ is the path integral of the restriction of $\gamma$ to $G$. More precisely, the restriction is 
a countable union of disjoint paths $\gamma_j$, each of which maps onto a component of $|\gamma| \setminus \mathcal{C}(G)$, and we define 
$$
\int_{\gamma} \rho \, ds = \sum_j \int_{\gamma_j} \rho \, ds.  
$$

Technically, Schramm worked with \emph{transboundary extremal length} of $\Gamma$, which equals $\frac{1}{\Mod(\Gamma)}$, and noticed that the proof of the conformal invariance of classical conformal modulus can be generalized to transboundary modulus in a straightforward manner.
\begin{lemma}[\cite{Sch:95}, Lemma 1.1]
\label{modinvariance} 
Suppose that $f\colon G \to G'$ is conformal. Then for every path family $\Gamma$ in $\hat{G}$, we have $\Mod(\Gamma)=\Mod(\hat{f}(\Gamma))$. Here $\hat{f}(\Gamma):=\{\hat{f} \circ \gamma: \, \gamma \in \Gamma\}$. 
\end{lemma}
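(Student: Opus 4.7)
The plan is to mimic the classical proof of conformal invariance of modulus, checking that the two additional ingredients (the boundary sum in the mass and the boundary sum in the admissibility condition) transport correctly under the conformal homeomorphism $f \colon G \to G'$. Since the claimed equality is symmetric in $G$ and $G'$, it suffices to prove the inequality $\Mod(\hat f(\Gamma)) \leq \Mod(\Gamma)$; the reverse follows by applying the same argument to $f^{-1}$.

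Given $\rho \in X(\Gamma)$, the natural pushforward is
\[
\rho'(w) = \rho(f^{-1}(w))\,|(f^{-1})'(w)| \quad \text{for } w \in G' \cap \mathbb{C}, \qquad \rho'(p') = \rho(\hat f^{-1}(p')) \quad \text{for } p' \in \mathcal{C}(G').
\]
I would first verify the mass identity. Using the change of variables $w = f(z)$, for which $dA(w) = |f'(z)|^2\, dA(z)$, the Euclidean part of the energy transforms as
\[
\int_{G' \cap \mathbb{C}} \rho'(w)^2\, dA(w) = \int_{G \cap \mathbb{C}} \rho(z)^2\, dA(z),
\]
after noting that $|(f^{-1})'(f(z))| = 1/|f'(z)|$, so the Jacobian factors cancel. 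The boundary part is immediate because $\hat f$ restricts to a bijection $\mathcal{C}(G) \to \mathcal{C}(G')$ under which $\rho'$ agrees with $\rho$, giving $\sum_{p'} \rho'(p')^2 = \sum_p \rho(p)^2$.

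Next I would check admissibility of $\rho'$ for $\hat f(\Gamma)$. Any $\gamma' \in \hat f(\Gamma)$ is of the form $\hat f \circ \gamma$ with $\gamma \in \Gamma$. Writing the restriction of $\gamma$ to $G$ as the disjoint union $\bigcup_j \gamma_j$ indexed over the components of $|\gamma| \setminus \mathcal{C}(G)$, the corresponding decomposition of $\gamma'$ is $\bigcup_j (f \circ \gamma_j)$, and the standard line-integral change of variables for conformal maps gives $\int_{f \circ \gamma_j} \rho'\, ds = \int_{\gamma_j} \rho\, ds$ for each $j$; summing in $j$ gives $\int_{\gamma'} \rho'\, ds = \int_\gamma \rho\, ds$. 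The boundary visits match under $\hat f$, so $\sum_{p' \in \mathcal{C}(G') \cap |\gamma'|} \rho'(p') = \sum_{p \in \mathcal{C}(G) \cap |\gamma|} \rho(p)$. Adding these equalities and using admissibility of $\rho$ shows $\rho' \in X(\hat f(\Gamma))$.

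Combining the two pieces, $\Mod(\hat f(\Gamma))$ is bounded above by the total energy of $\rho'$, which equals the total energy of $\rho$; taking the infimum over $\rho \in X(\Gamma)$ yields $\Mod(\hat f(\Gamma)) \leq \Mod(\Gamma)$, and symmetry closes the argument. The only subtle point is handling the boundary sum in the admissibility inequality, but because $\hat f$ is a homeomorphism of quotients it preserves incidence of paths with components of $\mathcal{C}(G)$, so no genuine obstacle arises; everything reduces to the standard conformal change of variables plus a bookkeeping observation for the discrete sums.
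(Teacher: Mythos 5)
Your proposal is correct and matches the intended argument: the paper does not include a proof, citing Schramm's Lemma 1.1 instead, with the remark that the classical proof of conformal invariance of modulus generalizes directly to the transboundary setting. That is precisely what you carry out — pushing forward the admissible function by the conformal density factor on $G' \cap \mathbb{C}$ and transporting it literally on $\mathcal{C}(G')$ via the bijection induced by $\hat f$, then checking that both the energy and the admissibility inequality are preserved.
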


We will apply the following characterization of path families of non-zero modulus in Section~\ref{sec:examples}. The proof follows directly from the definitions and appropriate scalar multiplications of the admissible functions $\rho$. 

\begin{lemma}
    \label{modlemma}
     A family $\Gamma$ of paths in $\hat{G}$ satisfies $\mod(\Gamma)>0$ if and only if there exists an $M>0$ such that for every admissible function $\rho$ for $\Gamma$ that satisfies
    $$
    \int_{G\cap \mathbb{C}} \rho^2 \, dA + \sum_{p \in \mathcal{C}(G)} \rho(p)^2 =1 ,$$
    we have
    $$
    \int_{\gamma} \rho \, ds +\sum_{p \in \mathcal{C}(G) \cap |\gamma|} \rho(p) \leq M  \quad \text{for some } \gamma \in \Gamma .
    $$
\end{lemma}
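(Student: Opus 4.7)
The lemma is a homogeneity reformulation, and I would prove it by a direct scaling argument. For any Borel function $\rho\colon \hat G \to [0,\infty]$, set
$$E(\rho) := \int_{G\cap\mathbb{C}} \rho^2\,dA + \sum_{p\in\mathcal{C}(G)} \rho(p)^2, \quad L(\rho) := \inf_{\gamma\in\Gamma}\left(\int_\gamma \rho\,ds + \sum_{p\in\mathcal{C}(G)\cap|\gamma|}\rho(p)\right).$$
Both functionals are positively homogeneous: $E(\lambda\rho) = \lambda^2 E(\rho)$ and $L(\lambda\rho) = \lambda L(\rho)$ for $\lambda > 0$. Admissibility of $\rho$ is exactly the condition $L(\rho) \geq 1$, and $\Mod(\Gamma) = \inf\{E(\rho) : L(\rho) \geq 1\}$.

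For the implication from the stated bound back to $\Mod(\Gamma) > 0$, I would argue by contrapositive. Suppose $\Mod(\Gamma) = 0$ and pick admissible $\rho_n$ with $E(\rho_n) \to 0$, so that eventually $E(\rho_n) \leq 1$. Rescale to $\sigma_n := \rho_n/\sqrt{E(\rho_n)}$; then $E(\sigma_n) = 1$ and $L(\sigma_n) = L(\rho_n)/\sqrt{E(\rho_n)} \geq 1$, so each $\sigma_n$ is admissible with the required normalization. For any proposed $M$, the hypothesis produces $\gamma_n \in \Gamma$ with $\int_{\gamma_n}\sigma_n\,ds + \sum \sigma_n(p) \leq M$, and scaling back gives $\int_{\gamma_n}\rho_n\,ds + \sum \rho_n(p) \leq M\sqrt{E(\rho_n)} \to 0$, which contradicts $L(\rho_n) \geq 1$ for large $n$.

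For the converse, let $m := \Mod(\Gamma) > 0$ and let $\rho$ be any admissible function with $E(\rho) = 1$. If $L(\rho) > 1/\sqrt{m}$ held, then $\tilde\rho := \rho/L(\rho)$ would be admissible, since $L(\tilde\rho) = 1$, with $E(\tilde\rho) = 1/L(\rho)^2 < m$, contradicting the definition of $\Mod(\Gamma)$. Hence $L(\rho) \leq 1/\sqrt{m}$, and choosing any fixed $M > 1/\sqrt{m}$, say $M = 1/\sqrt{m} + 1$, the definition of infimum yields some $\gamma \in \Gamma$ with $\int_\gamma \rho\,ds + \sum \rho(p) \leq M$.

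There is no real obstacle here; the content of the proof is entirely the two homogeneity identities above combined with the trick of rescaling to normalize either $E$ or $L$ to $1$. The only minor care-point is to choose $M$ strictly larger than $1/\sqrt{\Mod(\Gamma)}$ in the converse direction, so that the infimum bound $L(\rho) \leq 1/\sqrt m$ is realized by an actual path rather than merely approached.
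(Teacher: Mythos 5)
Your argument is correct and is precisely the scaling/homogeneity argument the paper has in mind when it remarks that the lemma ``follows directly from definitions and appropriate scalar multiplications of the admissible functions.'' The paper gives no further detail, so your two-sided rescaling (normalize $E$ to go one way, normalize $L$ to go the other, using $E(\lambda\rho)=\lambda^2 E(\rho)$, $L(\lambda\rho)=\lambda L(\rho)$) is exactly the intended proof, and your care in choosing $M$ strictly above $1/\sqrt{\Mod(\Gamma)}$ so that an actual $\gamma$ realizes the bound is the right small point to flag.
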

We will also apply the following basic properties of the transboundary modulus. The proof can be carried out in the same way as for the classical modulus, see \cite[Proposition 3.1]{HakLi23} for properties (1)-(3) and \cite[Cor. 7.20]{Hei:01} for property (4). Given path families $\Gamma_1$ and $\Gamma_2$ in $\hat{G}$, we say that $\Gamma_1$ \emph{minorizes} $\Gamma_2$ if every $\gamma_2 \in \Gamma_2$ contains a subpath $\gamma_1 \in \Gamma_1$. 
\begin{proposition} \label{transproperties} 
Let $\Gamma_1,\Gamma_2,\ldots$ be path families in $\hat{G}$. The following properties hold: 
\begin{enumerate}
\item If $\Gamma_1 \subset \Gamma_2$, then $\mod(\Gamma_1) \leq \mod(\Gamma_2)$. 
\item If $\Gamma=\cup_j \Gamma_j$, then $\mod(\Gamma) \leq \sum_j \mod(\Gamma_j)$. 
\item If $\Gamma_1$ minorizes $\Gamma_2$, then 
$\mod(\Gamma_1) \geq \mod(\Gamma_2)$.
\item  If $p \in G$ or if $p$ is an isolated point-component of $\hat{G}$, then the modulus of all the paths $\gamma$ in $\hat{G}$ satisfying $p \in |\gamma|$ is zero. 
\end{enumerate}
\end{proposition}

\section{Proof of the main result, Theorem \ref{tripodkoebe}} \label{sec:Simon}
The proof of our main result, Theorem \ref{tripodkoebe}, is based on the following estimate. We denote the open Euclidean disk with center $a \in \mathbb{C}$ and radius $r>0$ by $\mathbb{D}(a,r)$, and its boundary circle by $\mathbb{S}(a,r)$. Moreover, $\mathbb{A}(a,r)$ is the annulus $\mathbb{D}(a,4r)\setminus \overline{\mathbb{D}}(a,r/2)$. 
\begin{theorem} \label{thm:mainestimate} 
Let $\Omega \subset \hatc$ be a finitely connected domain that satisfies Conditions (i) and (ii) in Theorem \ref{tripodkoebe} with some constants $H$ and $N$. Then, there is an $M>0$, depending only on $H$ and $N$, so that if $a \in \mathbb{C}$ and $R>0$, then $\Mod \Gamma \leq M$, where 
\[
\Gamma=\{\text{paths in } \pi_{\Omega}(\overline{\mathbb{A}}(a,R))  \text{ joining } \pi_{\Omega}(\mathbb{S}(a,4R)) \text{ and } \pi_{\Omega}(\mathbb{S}(a,R/2))\}.   
\] 
\end{theorem}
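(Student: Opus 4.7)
The plan is to construct an admissible function $\rho$ for $\Gamma$ whose transboundary mass $\int_{\Omega\cap\mathbb{C}}\rho^2\,dA+\sum_{p\in\mathcal{C}(\Omega)}\rho(p)^2$ is bounded by a constant $M=M(H,N)$. Translation and scaling normalize the problem to $a=0$, $R=1$; since $\Omega$ is finitely connected, only finitely many components meet $\overline{\mathbb{A}}(0,1)$, but the bound must be uniform in this number.

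I would build $\rho$ from a radial Euclidean base weight $\rho_0(z)=(|z|\log 8)^{-1}\mathbf{1}_{\mathbb{A}(0,1)}(z)$, whose $L^2$ mass is $2\pi/\log 8$ and which already suffices for paths $\gamma\in\Gamma$ staying in $\Omega\cap\mathbb{A}(0,1)$. For each $p\in\mathcal{C}(\Omega)\setminus\{\bar{p}\}$ meeting $\overline{\mathbb{A}}(0,1)$, set $\rho(p)$ proportional to the logarithmic radial span $\log(r_{\max}(p)/r_{\min}(p))$, where $r_{\min}(p)$ and $r_{\max}(p)$ are the smallest and largest distances from $0$ to $p\cap\overline{\mathbb{A}}(0,1)$. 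With constants calibrated against $\log 8$, admissibility then follows by telescoping the radial log-jumps of $\gamma$ across the traversed components against the Euclidean log-integrals over the arcs of $\gamma$ between consecutive component visits.

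The delicate part is bounding $\sum_p\rho(p)^2$. A naive dyadic count using Condition (ii), which permits up to $N\cdot 2^{2k}$ components of diameter $\approx 2^{-k}$ meeting $\mathbb{A}(0,1)$, each contributing roughly $\rho(p)^2\lesssim 2^{-2k}$ at a given location, gives only $\sum_k N=\infty$. The quasitripod Condition (i) must be exploited to rescue the estimate: the three-prong planar structure of an $H$-quasitripod inside $p$ forces a quantitative ``cost of detour'' on any path trying to avoid $p$, as per the blocking principle flagged in the introduction (Proposition \ref{blockingprop}). I would use this to select, at each dyadic scale, a bounded ``active'' subfamily of components whose tripods genuinely block competing paths in $\Gamma$, and argue that components outside the active subfamilies do not need full weight $\rho(p)$, producing geometric decay in $k$ in the mass sum.

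The main obstacle is precisely this selection and blocking step: distinguishing components that merely intersect the annulus from those whose tripods actually obstruct paths in $\Gamma$, and doing so with constants depending only on $H$ and $N$. Further care is needed for components near $0$ or containing $0$, for the excluded component $\bar{p}$, and for coordinating the three-prong angular obstructions with the radial decomposition so that a tripod at scale $2^{-k}$ only blocks detours at comparable radial scales. I expect the final argument to combine a careful dyadic radial-angular grid on $\mathbb{A}(0,1)$, a per-cell selection of active components using Condition (ii), and a planarity argument invoking the tripod structure to yield the required uniform bound.
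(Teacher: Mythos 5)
Your setup is sound: normalizing to $a=0$, $R=1$, constructing an explicit admissible $\rho$, and correctly diagnosing that Packing Condition (ii) alone is insufficient because at each dyadic scale $2^{-k}$ one can have $\sim N\,2^{2k}$ components each contributing $\sim 2^{-2k}$ to the mass sum, giving a divergent series. You also correctly flag that the quasitripod condition and a ``cost of detour'' estimate must be used to select a bounded subfamily of components that carry weight.

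However, what you have is a diagnosis of the difficulty, not a proof. The crucial step — identifying which components ``genuinely block'' a given $\gamma\in\Gamma$, assigning weight only to those, and then verifying that the resulting $\rho$ is still admissible for \emph{every} $\gamma\in\Gamma$ while having bounded energy — is left as a heuristic (``I would use this to select \ldots'', ``I expect the final argument to combine \ldots''). In particular the admissibility argument is the hard part and is entirely absent. The paper's actual proof does something more structured than a dyadic radial--angular grid: it greedily sorts the small components $p\in P_V$ by diameter into ``good'' families $G_k$ (components with diameter comparable to $\diam(p_k)$ and close to $p_k$) and ``bad'' families $B_k$ (much smaller components clustered near a chosen point $a_{p_k}$ inside the tripod of $p_k$), gives weight $\sim \tau^{-1}\diam(p)$ to good components and \emph{zero} to bad ones, and then proves admissibility (Proposition~\ref{goodpathsprop}) via a delicate induction on a tree of parameter intervals $\mathcal{I}_k$ of $\gamma$, using Proposition~\ref{blockingprop} to show that whenever $\gamma$ circumvents a bad cluster it incurs a quantified Euclidean detour (Lemma~\ref{b2lemma}), and that the bookkeeping of cases (a1)--(a3), (b1), (b2) telescopes to the inequality \eqref{forjes} and its companion \eqref{Feet}. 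None of this combinatorial/tree argument, nor the ``grandparent sequence'' device used to control $\sum_k\card\mathcal{J}_{k-1}(a)\cdot r_k$, appears in your sketch. Also, your proposed weight $\rho(p)\propto\log(r_{\max}(p)/r_{\min}(p))$ measures only the \emph{radial} span of $p$; a small component lying tangentially along a circle $\mathbb{S}(0,t)$ would then receive essentially zero weight even though a path could still be forced to cross it, so the weight is not even clearly compatible with admissibility against the base $\rho_0\approx 1/|z|$. In short: correct identification of the obstacle, but the selection mechanism and, above all, the admissibility proof remain to be supplied, and these constitute the real content of the theorem.
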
 

We postpone the proof of Theorem \ref{thm:mainestimate} until Section \ref{sec:proofestimate}, and first show how it can be applied to prove Theorem \ref{tripodkoebe}. We may assume that $\card \mathcal{C}_N(\Omega)=\infty$, since otherwise Theorem \ref{tripodkoebe} follows from Koebe's theorem, see e.g. \cite[Theorem 9.5]{Bon11}. We enumerate the elements and denote 
$\mathcal{C}_N(\Omega)=\{p_0,p_1,\ldots\}$. It follows directly from the definitions that if Theorem \ref{tripodkoebe} holds for  
$$
\Omega'= \hatc \setminus \overline{\bigcup_{p \in \mathcal{C}_N(\Omega)}  p},  
$$
then the theorem also holds for $\Omega$. Indeed, notice that $\Omega \subset \Omega'$, so, if $f$ maps $\Omega'$ onto a circle domain that satisfies~\eqref{claimi2}, then (the restriction of) $f$ maps $\Omega$ onto a circle domain that satisfies~\eqref{claimi2}, because it maps the point-components of $\Omega$ to point-components. Therefore, we may assume that $\Omega'=\Omega$.

Recall that if $G \subset \hatc$ is a domain and $p \in \mathcal{C}(G)$, we do not make a distinction between $p$ and $\pi_G(p)$. In particular, if $p \subset \mathbb{C}$ then $\diam(\pi_G(p))$ is the Euclidean diameter of $p$. 

Given $k \in \mathbb{N}$, let $\tilde{\Omega}_k=\hatc \setminus (p_0 \cup p_1\cup \cdots \cup p_k)$. By Koebe's theorem there is a conformal 
homeomorphism $g_k \colon \tilde{\Omega}_k \to \tilde{D}_k$ so that $q_{k,\ell}:=\hat{g}_k(p_\ell)$ is a disk (with positive radius) for all $\ell=0,1,\ldots,k$. By postcomposing with a Möbius transformation, we may assume that 
\begin{equation} \label{normali}
q_{k,0}= \hatc \setminus \mathbb{D}(0,1) \quad \text{for all } k=1,2,\ldots. 
\end{equation}

For every $\ell \in \mathbb{N}$, any subsequence of $(q_{k,\ell})_k$ has a further subsequence Hausdorff converging to a limit disk or a point. Therefore we can choose a diagonal subsequence $(g_{k_j})_j$, converging locally uniformly in $\Omega$, so that $q_{k_j,\ell}\to q_\ell$ in the Hausdorff topology for each $\ell$. By normalization \eqref{normali}, the limit map $f$ is non-constant and therefore a conformal homeomorphism from $\Omega$ onto a domain $D$. Each $q_\ell$, $\ell \in \mathbb{N}$, is a disk or a point, and $q_0=\hatc \setminus \mathbb{D}(0,1)$. 

Theorem \ref{tripodkoebe} follows once we have established the following properties: 
\begin{eqnarray}
\label{nina1}
& & \diam(\hat{f}(p))=0  \quad \text{for all } p \in \mathcal{C}_P(\Omega), \\
\label{nina2}
& & q_\ell=\hat{f}(p_\ell) \quad \text{and}\quad \diam(q_\ell)>0  \quad  \text{for all } \ell=0,1,2,\ldots. 
\end{eqnarray} 

\begin{figure}
\begin{tikzpicture}[scale=0.8]
 \filldraw [gray!20] (-.9,-1) circle (2.8cm);
 \draw (-.9,-1) circle (2.8cm);
  
\draw[thick, decorate, decoration={random steps, segment length=5pt, amplitude=1.5pt}]
        (-3,-1) -- (-1.5,-2);      
\draw[thick, decorate, decoration={random steps, segment length=5pt, amplitude=1.5pt}]
        (-.6,-2) -- (-1.5,-2);  
\draw[thick, decorate, decoration={random steps, segment length=5pt, amplitude=1.5pt}]
        (-2,-2.9) -- (-1.5,-2); 
  
\draw[thick, decorate, decoration={random steps, segment length=5pt, amplitude=1.5pt}]
        (-3,0) -- (-1.5,1);      
\draw[thick, decorate, decoration={random steps, segment length=5pt, amplitude=1.5pt}]
        (-.6,1.5) -- (-1.5,1);  
\draw[thick, decorate, decoration={random steps, segment length=5pt, amplitude=1.5pt}]
        (-1.3,-1) -- (-1.5,1); 
  
\draw[thick, decorate, decoration={random steps, segment length=5pt, amplitude=1.5pt}]
        (-1,-1) -- (-.6,-.7);      
\draw[thick, decorate, decoration={random steps, segment length=5pt, amplitude=1.5pt}]
        (-.6,-.7) -- (-.5,-1.2);  
\draw[thick, decorate, decoration={random steps, segment length=5pt, amplitude=1.5pt}]
        (-.6,-.7) -- (-.5,-.4); 

\draw[densely dotted,thick, decorate, decoration={random steps, segment length=5pt, amplitude=1.5pt}]
        (.4,-1) -- (0,-.7);      
\draw[densely dotted,thick, decorate, decoration={random steps, segment length=5pt, amplitude=1.5pt}]
        (0,-.7) -- (.1,-1.2);  
\draw[densely dotted,thick, decorate, decoration={random steps, segment length=5pt, amplitude=1.5pt}]
        (0,-.7) -- (-.1,-.4); 

\draw[densely dotted,thick, decorate, decoration={random steps, segment length=5pt, amplitude=1.5pt}]
        (0.6,-.7) -- (.9,-1);
\draw[densely dotted,thick, decorate, decoration={random steps, segment length=5pt, amplitude=1.5pt}]
        (0.6,-.7) -- (.6,-1.2);  
\draw[densely dotted,thick, decorate, decoration={random steps, segment length=5pt, amplitude=1.5pt}]
        (0.6,-.7) -- (.5,-.4);

\draw[densely dotted,thick, decorate, decoration={random steps, segment length=5pt, amplitude=1.5pt}]
        (1.3,-.1) -- (1.7,-.5);
\draw[densely dotted,thick, decorate, decoration={random steps, segment length=5pt, amplitude=1.5pt}]
        (1.3,-.1) -- (1,.8);  
\draw[densely dotted,thick, decorate, decoration={random steps, segment length=5pt, amplitude=1.5pt}]
        (1.3,-.1) -- (.5,0);
  
\filldraw [gray!20] (7,-1) circle (2.8cm);
 \draw (7,-1) circle (2.8cm);
\fill[white] (7.5,0) circle (.3cm);
  
\draw (7.5,0) circle (.3cm);

\fill[white] (7.5,0) circle (.3cm);
  
\draw (7.5,0) circle (.3cm);

\fill[white] (7,-2) circle (.2cm);
  
\draw (7,-2) circle (.2cm);

\fill[white] (7.5,-1.3) circle (.4cm);
  
\draw (7.5,-1.3) circle (.4cm);
\draw[->] (2.3,-1) -- (3.8,-1);
\filldraw node(p) at (3,-.5) (above) {$\hat{f}_j$};
\filldraw node(p) at (6.9,-2.4) (above) {$\hat{f}_j(p_\ell)$};
\filldraw node(p) at (-.77,-1.2) (above) {$p_\ell$};
\end{tikzpicture}
\caption{$\Omega_j$ is the complement of the union of the solid quasitripods, conformally mapped by $f_j$ onto a circle domain. More components will be included in $C_N(\Omega_k)$ as $k$ increases.}
\end{figure}

We denote $g_{k_j}$ by $f_j$, $\tilde{\Omega}_{k_j}$ by $\Omega_j$, and $\tilde{D}_{k_j}$ by $D_j$. Moreover, 
$$
\text{we fix } \bar{p} \in \mathcal{C}(\Omega) \text{ and any Jordan curve } J \subset \Omega.  
$$
Next let $b \in \Omega \cap N_{R}(\bar{p})$, where $0<R=\dist(\bar{p},J)$ and $N_{\delta}(A)$ is the $\delta$-neighborhood of $A$ in $\mathbb{C}$. Here and in what follows, all distances are Euclidean unless stated otherwise. We choose a point $a \in \partial \bar{p}$ closest to $b$ and denote by $I$ the segment in $\mathbb{C}$ with endpoints $a$ and $b$. Given $j \geq 1$, let 
\begin{eqnarray*} 
\Gamma_j &=& \{\text{paths in } \hat{\Omega}_j\setminus  \{\pi_{\Omega_j}(\bar{p})\} \text{ that join } \pi_{\Omega_j}(J) \text{ and } \pi_{\Omega_j}(I) \}, \\ 
\Lambda_j &=& \{\text{paths in } \hat{\Omega}_j\setminus  \{\pi_{\Omega_j}(\bar{p})\}  \text{ that separate } \pi_{\Omega_j}(J) \text{ and } \pi_{\Omega_j}(\bar{p})\}. 
\end{eqnarray*} 

In summary, here is how the proofs of \eqref{nina1} and \eqref{nina2} proceed. We use Theorem \ref{thm:mainestimate} to prove upper bounds on $\Mod\Gamma_j$ and $\Mod \Lambda_j$. On the other hand, estimates on circle domains $D_j$ provide lower bounds on $ \Mod \hat{f}_j(\Gamma_j) $ and $\Mod \hat{f}_j (\Lambda_j)$. Combined with the conformal invariance of the transboundary modulus, these yield \eqref{nina1} and \eqref{nina2}.

We now state the circle domain estimates; we will prove them later in Section~\ref{sec:cmodulus}. 

\begin{proposition}
\label{circlemodulus} Let $f_j:\Omega_j \to D_j$ be the conformal maps defined above, such that each $D_j$ is a circle domain. The following estimates hold: 
\begin{enumerate}
\item There is a homeomorphism $\varphi_{\bar p}:[0,\infty) \to [0,\infty)$ so that 
$$
\limsup_{j \to \infty} \Mod \hat{f}_j(\Gamma_j) \geq \limsup_{j \to \infty} \varphi_{\bar p}(\dist(f_j(b),\hat{f}_j(\bar p))). 
$$
\item If $\diam(\hat{f}(\bar{p}))=0$ then $\lim_{j \to \infty}\Mod \hat{f}_j (\Lambda_j)=\infty$.  
\end{enumerate} 
\end{proposition}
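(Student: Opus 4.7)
Both statements are lower bounds on transboundary modulus in the circle domains $D_j$. Since every circle domain is a cofat domain (each complementary disk being $\pi/4$-fat), both parts follow from standard modulus estimates for cofat domains, following Schramm \cite{Sch:95}; see also \cite{Bon11}, \cite{Raj23}. Part (1) will be proved via a Loewner-type transboundary estimate applied to an auxiliary subfamily, and part (2) via an annulus-type transboundary estimate.

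\textbf{Part (1).} The plan is to exhibit a subfamily of $\hat f_j(\Gamma_j)$ whose modulus admits a manageable lower bound in terms of $\delta_j := \dist(f_j(b),\hat f_j(\bar p))$. Fix a short sub-segment $\sigma \subset I$ ending at $b$ with $\sigma \subset \Omega$; then $\hat f_j(\sigma) = f_j(\sigma)$ for all $j$, and $f_j(\sigma) \to f(\sigma)$ in Hausdorff distance. Since $f_j \to f$ locally uniformly with $f'(b)\neq 0$, the Koebe distortion theorem lets me extract a sub-segment $\sigma_j \subset \sigma$ ending at $b$ with $f_j(\sigma_j) \subset \mathbb{D}(f_j(b),\delta_j/2)$ and $\diam f_j(\sigma_j) \geq c\,\delta_j$ uniformly for large $j$. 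The sub-family $\Gamma_j'$ of paths in $\hat f_j(\Gamma_j)$ joining $f_j(J)$ to $f_j(\sigma_j)$ in $\hat D_j \setminus \{\hat f_j(\bar p)\}$ satisfies $\Mod \hat f_j(\Gamma_j) \geq \Mod \Gamma_j'$. A Loewner-type transboundary modulus estimate on cofat domains then yields $\Mod \Gamma_j' \geq \psi(c\,\delta_j)$ for an increasing homeomorphism $\psi\colon[0,\infty)\to[0,\infty)$ that depends only on $\diam f(J)$ (which is bounded below along the subsequence) and the uniform bound $\dist(f_j(J),f_j(\sigma_j)) \leq 2$ arising from $D_j \subset \mathbb{D}(0,1)$. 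Setting $\varphi_a(t) := \psi(ct)$ proves part (1).

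\textbf{Part (2).} The hypothesis $\diam \hat f(\bar p) = 0$ implies that the images $\hat f_j(\bar p)$ lie in shrinking Euclidean disks $\mathbb{D}(z_j, r_j)$ with $r_j \to 0$ and $z_j \to z^* \in \hat D$ at positive distance from the Hausdorff limit $f(J)$; hence $f_j(J) \cap \mathbb{D}(z_j, R_0) = \emptyset$ for some fixed $R_0 > 0$ and all $j$ large. For each $s \in (r_j, R_0)$, the Euclidean circle $\mathbb{S}(z_j, s)$ lifts to a closed curve in $\hat D_j$ separating $\hat f_j(\bar p)$ from $f_j(J)$, hence belongs to $\hat f_j(\Lambda_j)$. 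Applying Cauchy-Schwarz on each such circle to the admissibility inequality (with weights $1/\diam(q)$ on the disk terms) and integrating against $ds/s$ via Fubini yields
\[
\log(R_0/r_j) \leq C\!\left(\int_{D_j} \rho^2 \, dA + \sum_{q \in \mathcal{C}(D_j)} \rho(q)^2\right)
\]
for every admissible $\rho$, where $C$ is a universal constant. Taking the infimum over $\rho$ gives $\Mod \hat f_j(\Lambda_j) \geq C^{-1}\log(R_0/r_j) \to \infty$.

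\textbf{Main obstacle.} The principal technical difficulty in both parts is the transboundary nature of the admissibility: test circles or test paths meet multiple complementary disks, with the admissibility cost shared between arc-length integrals and discrete disk sums. The Cauchy-Schwarz argument requires careful weighting, and the disk contributions are controlled via the packing estimate $\sum_{q \cap \mathbb{S}(z_j, s)\neq \emptyset} \diam(q) \leq Cs$, which holds for circle domains because the disjoint round disks hit by a single circle cut out disjoint arcs whose total length is at most the circumference. Once this estimate is in hand, the Fubini computation in part (2) and the analogous Loewner-type computation in part (1) are standard bookkeeping that closely follows Schramm's original cofat arguments.
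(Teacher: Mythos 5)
Both parts of your proposal have genuine gaps, and in each case the missing idea is precisely the one the paper supplies.

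For Part (1), you invoke a ``Loewner-type transboundary modulus estimate on cofat domains'' applied to the subfamily $\Gamma_j'$ of paths joining $f_j(J)$ to $f_j(\sigma_j)$ \emph{in $\hat D_j \setminus\{\hat f_j(\bar p)\}$}. The problem is that removing the point $\hat f_j(\bar p)$ from $\hat D_j$ corresponds to forbidding passage through an entire complementary disk, so $\Gamma_j'$ is a strict subfamily of the paths a generic Loewner-type estimate would control, and subfamilies have \emph{smaller} modulus. A black-box Loewner bound gives a lower bound for the full family, not for the obstructed one, and there is no uniform estimate of the kind you state that is indifferent to the location and size of a forbidden round disk: a priori $\hat f_j(\bar p)$ could nearly surround $f_j(\sigma_j)$ and squeeze the permissible paths into a thin channel. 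The paper's actual proof engineers this away geometrically: after rotating so that $f_j(b) = w_0 + \delta i$ with $w_0$ the nearest point of $\hat f_j(\bar p)$, it uses the \emph{horizontal} line segments $L_s$, $0<s<\delta$, which automatically lie on the opposite side of $w_0$ from the disk $\hat f_j(\bar p)$ (since that disk, being round, sits below its closest point). This, together with the fact that $f_j(J)$ separates $\hat f_j(\bar p)$ from $\infty$, produces a one-parameter family of admissible segments inside $\mathbb{D}(0,1)$ to which Fubini and H\"older apply directly; no abstract Loewner input is needed, and the exclusion of $\hat f_j(\bar p)$ is handled for free.

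For Part (2), your direct Fubini over $s\in(r_j,R_0)$ does not close. When you rewrite $\int_{r_j}^{R_0}\tfrac{1}{s}\sum_{q\cap\mathbb{S}(z_j,s)\neq\emptyset}\rho(q)\,ds$ as $\sum_q\rho(q)\log(b_q/a_q)$, a single complementary disk $q$ spanning many scales (with $\diam(q)\sim R_0$ and $\dist(z_j,q)\sim r_j$) contributes a factor $\log(R_0/r_j)$ for that one $q$, so Cauchy--Schwarz only yields $\log(R_0/r_j)\leq C\bigl(\sum\rho(q)^2\bigr)^{1/2}\log(R_0/r_j)$ and the logarithm does not survive. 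In fact such a disk gives an admissible $\rho=\chi_{\{q\}}$ with energy $1$, so without an extra ingredient your lower bound for $\Mod\hat f_j(\Lambda_j)$ cannot even exceed $1$. Your stated packing inequality $\sum_{q\cap\mathbb{S}(z_j,s)\neq\emptyset}\diam(q)\leq Cs$ is also unjustified: disks barely tangent to the circle cut out essentially zero arc length, so the ``disjoint arcs'' argument does not control $\sum\diam(q)$, and in any case it fails for a single disk of diameter $\gg s$. The paper's proof avoids both problems by (i) choosing \emph{skipping radii} $r_n$ so that no complementary disk intersects two of the chosen annuli $\mathbb{A}_n$, which makes the projections $\pi_{D_j}(\mathbb{A}_n)$ disjoint and allows adding moduli across annuli, and (ii) within each annulus bounding the sum in \eqref{reka2} using $\min\{\diam(q),4r_n\}$, splitting into at most $100$ ``large'' disks $\mathcal{Q}_L$ (counted by a packing/area argument) and ``small'' disks $\mathcal{Q}_S$ controlled by $\sum\diam(q)^2$ via disjointness and area. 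Both of these devices are essential and are absent from your argument.
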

We now apply Theorem \ref{thm:mainestimate} to establish modulus estimates on $\Gamma_j$, $\Lambda_j$. 
 We first show that  
\begin{equation}
\label{upperbound}
\Mod \Gamma_j \leq \theta_a(|b-a|), 
\end{equation}  
where $\theta_a$ does not depend on $j$ and $\theta_a(\epsilon) \to 0$ as $\epsilon \to 0$. 

To prove \eqref{upperbound}, we notice that every $\gamma \in \Gamma_j$ intersects $\pi_{\Omega_j}(\mathbb{S}(a,R))$ and $\pi_{\Omega_j}(\mathbb{S}(a,|b-a|))$ but avoids $\pi_{\Omega_j}(\bar{p})$. Therefore, by the monotonicity of the transboundary modulus (Proposition \ref{transproperties} (1)), it suffices to show that 
$$
\Mod \Gamma_j(r,R) \leq \theta(r), \quad \theta(r) \to 0 \text{ as } r \to 0, \quad \theta \text{ does not depend on } j,  
$$
where 
\[
\Gamma_j(r,R)=\{\text{paths in } \hat{\Omega}_j\setminus \{\pi_{\Omega_j}(\bar{p})\} \text{ that join } \pi_{\Omega_j}(\mathbb{S}(a,R)) \text{ and } \pi_{\Omega_j}(\mathbb{S}(a,r))\}. 
\] 

We choose a decreasing sequence of radii $R_n$ as follows: Let $R_1:=R/10$. Then, assuming $R_1,\ldots,R_{n-1}$ are defined let 
$$
R_{n}=\frac{R'_n}{10}, 
$$ 
where $R'_n \leq R_{n-1}/2$ is the smallest radius for which some $p \in \mathcal{C}_N(\Omega) \setminus \{\bar{p}\}$ intersects both $\mathbb{S}(a,R_{n-1}/2)$ and $\mathbb{S}(a,R'_n)$. If no $p \in \mathcal{C}_N(\Omega) \setminus \{\bar{p}\}$ intersects $\mathbb{S}(a,R_{n-1}/2)$, we set $R'_n=R_{n-1}/2$. Then $R_{n}$ does not depend on $j$, $R_n \to 0$ as $n \to \infty$, and both the annuli 
$$
\mathbb{A}_n=\mathbb{D}(a,4R_n) \setminus \overline{\mathbb{D}}(a,R_n/2), \quad n=1,2,\ldots, 
$$ 
and their projections $\pi_{\Omega_j}(\mathbb{A}_n)$ under any given $j$ are pairwise disjoint. Let $\Gamma_j(n)$ 
be the family of paths $\gamma$ in 
$\pi_{\Omega_j}(\mathbb{A}_n) \setminus  
\{\pi_{\Omega_j}(\bar{p})\}$ such that 
\[ \gamma \text{ joins }\pi_{\Omega_j}(\mathbb{S}(a,4R_n)) \text{ and } \pi_{\Omega_j}(\mathbb{S}(a,R_{n}/2)).  
\] 
Notice that if $\Omega$ satisfies Conditions (i) and (ii) in Theorem \ref{tripodkoebe} with some $H$ and $N$, then every $\Omega_j$ satisfies the same conditions. Therefore, by Theorem \ref{thm:mainestimate} and the monotonicity of the transboundary modulus (Proposition \ref{transproperties} (1)), we have 
$\Mod \Gamma_j(n) \leq M$, where $M$ does not depend on $j$ or $n$. 

We fix $N \in \mathbb{N}$ and choose for every $1 \leq n \leq N$ an admissible function $\rho_n$ for $\Gamma_j(n)$ such that 
$$
\int_{\Omega_j\cap \mathbb{A}_n} \rho_n^2 \, dA + \sum_{p \in \mathcal{C}(\Omega_j) \cap \pi_{\Omega_j}(\mathbb{A}_n)} \rho_n(p)^2 \leq 2M 
$$ 
and such that $\rho_n(x)=0$ if $x \in \Omega_j \setminus \mathbb{A}_n$ or $x \in \mathcal{C}(\Omega_j) \setminus \pi_{\Omega_j}(\mathbb{A}_n)$. Now $\rho:= \frac{1}{N}\sum_{n=1}^N \rho_n$ is admissible for $\Gamma_j(R_{N+1},R)$. Moreover, since the sets $\pi_{\Omega_j}(\mathbb{A}_n)$ are pairwise disjoint we have 
$$
\int_{\Omega_j} \rho^2 \, dA + \sum_{p \in \mathcal{C}(\Omega_j)} \rho(p)^2 \leq \frac{2MN}{N^2}=\frac{2M}{N} \to 0 
\quad \text{as } N \to \infty. 
$$
Estimate \eqref{upperbound} follows. 

We can now prove \eqref{nina1}: assume $\bar{p}=\{a\} \in \mathcal{C}_P(\Omega)$ and suppose towards a contradiction that $\hat{f}(\bar{p}) \in \mathcal{C}_N(D)$. 
Then there are $c>0$ and a sequence $(b_m)$ of points in $\Omega$ converging to $a$ so that for every $m \in \mathbb{N}$ we have  
\begin{equation} \label{summer} 
\limsup_{j \to \infty}\dist(f_j(b_m),\hat{f}_j(\bar{p})) \geq c >0.  
\end{equation}   
Combining  \eqref{upperbound} and the first part of Proposition \ref{circlemodulus} with Lemma \ref{modinvariance} (conformal invariance of modulus) gives a contradiction, proving \eqref{nina1}. 

Towards \eqref{nina2}, let $\bar{p}=p_\ell$ for some $\ell \in  \mathbb{N} \cup \{0\}$, and let $j_\ell$ be the smallest index for which $p_\ell \in \mathcal{C}_N(\Omega_{j_\ell})$. We claim that 
\begin{equation}\label{lowerbound}
\Mod \Lambda_j \leq M_\ell <\infty \quad \text{for all } j \geq j_\ell, 
\end{equation} 
where $M_\ell$ does not depend on $j$. To start the proof of \eqref{lowerbound}, we fix $c \in \partial p_\ell$ and $d \in J \cap \mathbb{C}$ so that $|c-d|=\dist(p_\ell,J)$, and let 
$\xi$ be the segment with endpoints $c$ and $d$. We cover $\xi$ with $N_1<\infty$ disks $\mathbb{D}(z_n,r)$, where 
$r=\diam(p_\ell)/20$. 

Since every $\lambda \in \Lambda_j$ separates $\pi_{\Omega_j}(\bar p)$ and $\pi_{\Omega_j}(J)$, $\lambda$ has to pass through $\pi_{\Omega_j}(\xi)$ and, consequently, 
through at least one $\pi_{\Omega_j}(\mathbb{D}(z_n,r))$. Furthermore, we have  
$$
\diam(\pi_{\Omega_j}^{-1}(|\lambda|)) \geq \diam(p_\ell), 
$$
which implies that if $\lambda$ passes through $\pi_{\Omega_j}(\mathbb{D}(z_n,r))$ then it also passes through 
$\pi_{\Omega_j}(\mathbb{S}(z_n,8r))$. Therefore, 
\begin{equation} \label{kuppi} 
\Lambda_j \subset \bigcup_{n=1}^{N_1} \Gamma_j(n), 
\end{equation} 
where 
$$
\Gamma_j(n)=\{\text{paths in } \hat{\Omega}_j \text{ joining }  \pi_{\Omega_j}(\mathbb{S}(z_n,8r)) \text{ and } \pi_{\Omega_j}(\mathbb{S}(z_n,r))\}. 
$$ 

By Theorem \ref{thm:mainestimate}, $\Mod \Gamma_j(n) \leq M$ 
for every $1 \leq n \leq N_1$. Thus, by \eqref{kuppi} and the monotonicity and subadditivity of the transboundary modulus (Proposition \ref{transproperties} (1) and (2)), we have 
$$
\Mod \Lambda_j \leq \sum_{n=1}^{N_1} \Mod \Gamma_j(n) \leq MN_1, 
$$
which proves \eqref{lowerbound}. 

We can now prove \eqref{nina2}. The proof of the first part is similar to the proof of \eqref{nina1}. We have $q_\ell \subset \hat{f}(p_\ell)$ by Carath\'eodory's kernel convergence theorem; see \cite[Theorem V.5.1, p.~228]{Gol:69}. %
Suppose towards a contradiction that 
$q_\ell \subsetneq \hat{f}(p_\ell)$. Then there are $c>0$ and a sequence $(b_m)$ in $\Omega$ so that $\operatorname{dist}(b_m,p_\ell) \to 0$ as $m\to \infty$ and \eqref{summer} holds with $\bar{p}=p_\ell$. Combining \eqref{upperbound} and the first part of Proposition \ref{circlemodulus} with Lemma \ref{modinvariance} (conformal invariance of modulus) gives a contradiction. For the second part of \eqref{nina2} it suffices to combine \eqref{lowerbound} and the second part of Proposition \ref{circlemodulus} with Lemma \ref{modinvariance}. 

We have proved that Theorem \ref{tripodkoebe} follows from Theorem \ref{thm:mainestimate} and Proposition \ref{circlemodulus}. 

%

\section{Proof of Theorem \ref{thm:mainestimate} } \label{sec:proofestimate}
In this section we assume that $\Omega \subset \hatc$ is as in Theorem~\ref{thm:mainestimate}: a \emph{finitely connected} domain satisfying Conditions (i) and (ii) in Theorem \ref{tripodkoebe} with some $H$ and $N$. To prove Theorem \ref{thm:mainestimate}, we must find a uniform bound for $\Mod \Gamma$, where (see Figure~\ref{fig:thm3.1}). 
\[
\Gamma=\{\text{paths in } \pi_{\Omega}(\overline{\mathbb{A}}(a,R))  \text{ joining } \pi_{\Omega}(\mathbb{S}(a,4R)) \text{ and } \pi_{\Omega}(\mathbb{S}(a,R/2))\}.    
\] 

\begin{figure}[h]
\begin{tikzpicture}[scale=1]
\filldraw [gray!20] (0,0) circle (3cm);
\filldraw [white] (0,0) circle (1cm);
\draw[thick, decorate, decoration={random steps, segment length=5pt, amplitude=1.5pt}]
        (-.6,0) -- (-1.3,0.1);
\draw[thick, decorate, decoration={random steps, segment length=5pt, amplitude=1.5pt}]
        (-1.6,-0.5) -- (-1.3,0.1);
\draw[thick, decorate, decoration={random steps, segment length=5pt, amplitude=1.5pt}]
        (-1.7,0.6) -- (-1.3,0.1);

\draw[thick, decorate, decoration={random steps, segment length=5pt, amplitude=1.5pt}]
        (-.4,1.5) -- (-0.2,1.2);
\draw[thick, decorate, decoration={random steps, segment length=5pt, amplitude=1.5pt}]
        (+.1,1.5) -- (-0.2,1.2);
\draw[thick, decorate, decoration={random steps, segment length=5pt, amplitude=1.5pt}]
        (-.2,1) -- (-0.2,1.2);

\draw[thick, decorate, decoration={random steps, segment length=5pt, amplitude=1.5pt}]
        (-.4,1.5+.3) -- (-0.2,1.2+.3);
\draw[thick, decorate, decoration={random steps, segment length=5pt, amplitude=1.5pt}]
        (+.1,1.5+.3) -- (-0.2,1.2+.3);
\draw[thick, decorate, decoration={random steps, segment length=5pt, amplitude=1.5pt}]
        (-.2,1+.3) -- (-0.2,1.2+.3);
\draw[thick, decorate, decoration={random steps, segment length=5pt, amplitude=1.5pt}]
        (-.4,1.5+.6) -- (-0.2,1.2+.6);
\draw[thick, decorate, decoration={random steps, segment length=5pt, amplitude=1.5pt}]
        (+.1,1.5+.6) -- (-0.2,1.2+.6);
\draw[thick, decorate, decoration={random steps, segment length=5pt, amplitude=1.5pt}]
        (-.2,1+.6) -- (-0.2,1.2+.6);
\draw[thick, decorate, decoration={random steps, segment length=5pt, amplitude=1.5pt}]
        (-.4,1.5+.9) -- (-0.2,1.2+.9);
\draw[thick, decorate, decoration={random steps, segment length=5pt, amplitude=1.5pt}]
        (+.1,1.5+.9) -- (-0.2,1.2+.9);
\draw[thick, decorate, decoration={random steps, segment length=5pt, amplitude=1.5pt}]
        (-.2,1+.9) -- (-0.2,1.2+.9);
\draw[thick, decorate, decoration={random steps, segment length=5pt, amplitude=1.5pt}]
        (-.4,1.5+1.2) -- (-0.2,1.2+1.2);
\draw[thick, decorate, decoration={random steps, segment length=5pt, amplitude=1.5pt}]
        (+.1,1.5+1.2) -- (-0.2,1.2+1.2);
\draw[thick, decorate, decoration={random steps, segment length=5pt, amplitude=1.5pt}]
        (-.2,1+1.2) -- (-0.2,1.2+1.2);

\draw[thick, decorate, decoration={random steps, segment length=5pt, amplitude=1.5pt}]
        (-2,-3) -- (-1,-1.2) -- (-0.2,-.3);
\draw[thick, decorate, decoration={random steps, segment length=5pt, amplitude=1.5pt}]
        (3,-2.8) -- (1,-1) -- (-0.2,-.3);
\draw[thick, decorate, decoration={random steps, segment length=5pt, amplitude=1.5pt}]
        (-.9,2.2) --(-.8,1) -- (-0.2,-.3);

\draw[thick, decorate, decoration={random steps, segment length=5pt, amplitude=1.5pt}]
        (+.6,1) -- (+1.3,.8);
\draw[thick, decorate, decoration={random steps, segment length=5pt, amplitude=1.5pt}]
        (+1.7,-0.1) -- (+1.3,0.8);
\draw[thick, decorate, decoration={random steps, segment length=5pt, amplitude=1.5pt}]
        (+1.7,1.6) -- (+1.3,0.8);
\end{tikzpicture}
\caption{The annulus $\mathbb{A}(a,R)$ and some of the complementary components of $\Omega$, as in Theorem~\ref{thm:mainestimate}. Paths can use complementary components as shortcuts to join the two boundary circles of the annulus.}
\label{fig:thm3.1}
\end{figure}

We begin by discussing the core ideas of the proof of Theorem \ref{thm:mainestimate}. 
We need to find a Borel function $\rho\colon \hat{\Omega}\to [0,\infty]$ such that
\begin{equation} 
\label{qina}
1 \leq \int_{\gamma} \rho \, ds +\sum_{p \in \mathcal{C}(\Omega) \cap |\gamma|} \rho(p)  \quad \text{for all } \gamma \in \Gamma  
\end{equation}
while maintaining a uniform bound, independent of the center of the annulus and the number of complementary components of $\Omega$, on
\begin{equation}
\label{queen}
\int_{\Omega \cap \mathbb{C}} \rho^2 \, dA + \sum_{p \in \mathcal{C}(\Omega)} \rho(p)^2.
\end{equation}

First, let $\tilde{\rho}(x)=R^{-1}$ when $x \in \Omega \cap \mathbb{A}(a,R)$, and $\tilde{\rho}(x)=0$ elsewhere. Then $\tilde{\rho}$   
is admissible for the subfamily of $\Gamma$ consisting of the paths which stay within the domain $\Omega$. Moreover, the energy \eqref{queen} of $\tilde{\rho}$ is bounded from above by $16\pi$.  
It is therefore natural to define $\rho=\tilde{\rho}$ on $\Omega$.  

The first attempt towards completing the definition of $\rho$ is setting $\rho(p)=\diam(p)/R$ for all $p \in \mathcal{C}(\Omega)$. The triangle inequality shows that such a $\rho$ satisfies the admissibility condition \eqref{qina}. Moreover, if $\Omega$ is a circle domain or a cofat domain, then the energy \eqref{queen} is uniformly bounded from above.  

However, our assumptions are not restrictive enough to guarantee uniform bounds for the energy \eqref{queen} of such a $\rho$, see Example \ref{elltwoexample}. 
This is a serious obstacle, which we overcome by finding an intricate definition of $\rho$ on $\mathcal{C}(\Omega)$. 

To this end, the first step is to notice that the packing Condition (ii) in Theorem \ref{tripodkoebe} gives a uniform bound for the number of complementary components intersecting $\mathbb{A}(a,R)$ whose diameters are larger than or comparable to $R$. Thus, we can assign the value $\rho(p)=1$ for each such component $p$.

To complete the definition of $\rho$, we apply the quasitripod Condition (i) in Theorem \ref{tripodkoebe} to show that the elements of a substantial subset $B$ of $\mathcal{C}(\Omega)$ are ``overshadowed'' by larger components which are not in $B$. We set  $\rho(p)=0$ on $B$ and $\rho(p)=\diam(p)/R$ for the remaining components $p$. 

The most technical part of the proof of Theorem \ref{thm:mainestimate} is proving that our definition yields an admissible function $\rho$ for $\Gamma$. The proof goes roughly as follows: given a path $\gamma \in \Gamma$ passing through some elements of $B$, we need to compensate for the fact that $\rho=0$ on $B$. We apply the following strategy: let $p \in B$ be a component which is overshadowed by a component $p'$. The quasitripod condition implies that there are two options: 
\begin{enumerate} 
\item If $\gamma$ passes through $p'$, the weight $\rho(p')$ is sufficient to compensate for $\rho(p)=0$. 
\item If $\gamma$ passes through $p$ but not through $p'$, then it must ``go around'' $p'$ and pick up ``extra weight'' which is sufficient to compensate for $\rho(p)=0$; see Proposition \ref{blockingprop}. 
\end{enumerate}

The main estimate in the proof of the admissibility of $\rho$ is Proposition \ref{goodpathsprop}, whose proof occupies the last subsections of this section. 

\subsection{Costs of detours around quasitripods}
We now start the proof of Theorem \ref{thm:mainestimate}. We lose no generality by assuming that $\mathcal{C}(\Omega)=\mathcal{C}_N(\Omega)$. Indeed, since $\Omega$ is finitely connected, the point-components $p \in \mathcal{C}_P(\Omega)$ are isolated and we can apply Proposition \ref{transproperties} (4). 

Our main application of the Quasitripod Condition (i) in Theorem \ref{tripodkoebe} is the following proposition, which states that there is a relatively large neighborhood near every $p \in \mathcal{C}(\Omega)$ which is ``overshadowed'' by $p$.

Given $p \in \mathcal{C}(\Omega)$ and $0<\tau<1/4$ we will repeatedly use the shorthand 
\begin{equation} \label{baii}
r_p=r_p(\tau)=\tau \diam(p)>0.
\end{equation} 
Let $a_p,b_p \in \mathbb{C}$ (soon to be specified) and assume that $\overline{\mathbb{D}}(a_p,4\tau r_p) \subset \mathbb{D}(b_p, r_p)$. Let $\Gamma(a_p,b_p,\tau)$ be the family of paths 
\begin{equation} \label{jolly}
\alpha:I \to \hat{\Omega}, \quad I=[s_1,t_1]=[s_1(\alpha),t_1(\alpha)], 
\end{equation}
for which there are $s_1 < s_2 \leq t_2 < t_1$ with the following properties:  
\begin{itemize}
\item[(i)] $\alpha(s_2) \cup \alpha(t_2) \subset \mathbb{D}(a_p,4\tau r_p)$, 
\item[(ii)] $\alpha(t) \cap \mathbb{S}(b_p,r_p) \neq \emptyset$ for $t=s_1$ and $t=t_1$, 
\item[(iii)]  $\diam (\alpha(t)) \le \tau r_p$ for $t=s_1$ and $ t=t_1$, 
\item[(iv)] $\alpha(t) \subset \mathbb{D}(b_p,r_p)$ for all $s_1<t<s_2$ and $t_2<t<t_1$. 
\end{itemize} 
Observe that the subpaths of $\alpha$ on $[s_1,s_2]$ and then on $[t_2,t_1]$ each join $\pi_\Omega (\mathbb{S}(b_p,r_p))$ to $\pi_\Omega(\mathbb{D}(a_p,4\tau r_p))$ within $\pi_\Omega(\overline{\mathbb{D}}(b_p,r_p))$. 

Recall that we assume that every $p \in \mathcal{C}(\Omega)$ contains an $H$-quasitripod $T$ with $\diam(T) \geq \diam(p)/H$. 

\begin{proposition}\label{blockingprop}
There is a number $0<\tau <\frac{1}{1000}$, depending only on $H$, so that the following holds: for every $p \in \mathcal{C}(\Omega)$ there exist $b_p \in p$ and $a_p \in \mathbb{C}$, such that   
\begin{equation}
\label{indika}
\mathbb{D}(a_p,4 \tau r_p) \subset \mathbb{D}(b_p,\tau^{1/2} r_p), 
\end{equation}
and such that for every $\alpha \in \Gamma(a_p,b_p,\tau)$ for which $p \notin |\alpha|$ we have 
\begin{equation} \label{dessed} 
\dist(\alpha(s_1),\alpha(t_1)) \leq \dist(\alpha(s_1),\alpha(s_{2}))+\dist(\alpha(t_1),\alpha(t_2))  - \frac{1}{10} r_p. 
\end{equation} 
Here $s_2$ and $t_2$ are the numbers defined after \eqref{jolly}. 
\end{proposition}

Proposition \ref{blockingprop} implies that if $\alpha \in \Gamma(a_p,b_p,\tau)$ does not pass through $p$ then it is uniformly far from being a ``geodesic'', see Figure~\ref{cost}. Notice that the right side of \eqref{dessed} does not include the term $\dist(\alpha(s_2),\alpha(t_2))$. 

\begin{figure}
\begin{tikzpicture}[scale=1.5]
\draw [thick] (1.8,1.8)--(0,0); 
\draw [thick] (1,-2.6)--(0,0);
\draw [thick] (-3,-.8)--(0,0); 

\draw[thick, decorate, decoration={random steps, segment length=5pt, amplitude=1.5pt}]
        (-1,-1) -- (-.6,-.7);      
\draw[thick, decorate, decoration={random steps, segment length=5pt, amplitude=1.5pt}]
        (-.6,-.7) -- (-.5,-1.2);  
\draw[thick, decorate, decoration={random steps, segment length=5pt, amplitude=1.5pt}]
        (-.6,-.7) -- (-.5,-.4);

\draw[thick, decorate, decoration={random steps, segment length=5pt, amplitude=1.5pt}]
        (1,0) -- (1,0.5);
\draw[thick, decorate, decoration={random steps, segment length=5pt, amplitude=1.5pt}]
        (1,0) -- (.5,-.4);
\draw[thick, decorate, decoration={random steps, segment length=5pt, amplitude=1.5pt}]
        (1,0) -- (1.4,-.2);
        
\draw[thick, decorate, decoration={random steps, segment length=5pt, amplitude=1.5pt}]
        (0,-1.96) -- (0.1,-1.8);
\draw[thick, decorate, decoration={random steps, segment length=5pt, amplitude=1.5pt}]
        (0,-1.96) -- (-.1,-1.92);
\draw[thick, decorate, decoration={random steps, segment length=5pt, amplitude=1.5pt}]
        (0,-1.96) -- (0.05,-2.1);  
        
\draw [thick,densely dotted] plot [smooth ] coordinates {(-1.65,-1.1) (-.95,-.95)};
\draw [thick,densely dotted] plot [smooth ] coordinates {(-.5,-.4) (-0.1,-.17)}; 
\draw [thick,densely dotted] plot [smooth ] coordinates {(0.-.06,-.22) (0.1,-1.8)};

\draw[thin,dashed] (0,0) circle (2cm);
\draw[thin,dashed] (-0.1,-.2) circle (.1cm);
\node at (1,1.2) {$p$};
\node at (-1.4,-.9) {$_{\alpha}$};
\node at (.1,-1) {$_{\alpha}$};
\node at (0.1,-2.24){$_{\alpha(t_1)}$};
\node at (-1.94,-1.1){$_{\alpha(s_1)}$};
\end{tikzpicture}
\caption{The dotted path shows a sample $\alpha \in \Gamma(a_p,b_p,\tau)$. It has two subpaths that each join $\mathbb{S}(b_p,r_p)$ to $\mathbb{D}(a_p,4\tau r_p)$.}
\label{cost}
\end{figure}

\begin{proof}
We denote the vertices of the standard tripod $T_0$ by $z_0,z_1,z_2$. By assumption there is a weakly $H$-quasisymmetric homeomorphism $\phi \colon T_0 \to T \subset p$ 
with $\diam(T) \geq H^{-1} \diam(p)$. By V\"ais\"al\"a's theorem \cite[Corollary 10.22]{Hei:01}, $\phi$ is in fact (strongly) quasisymmetric: there is a homeomorphism 
$\eta\colon [0,\infty) \to [0,\infty)$ depending only on $H$ so that for every $t>0$ and every triple of points $w_0,w_1,w_2$, 
$$
|w_1-w_0| \leq t|w_2-w_0| \quad \text{implies} \quad |\phi(w_1)-\phi(w_0)| \leq \eta(t)|\phi(w_2)-\phi(w_0)|. 
$$
We set $b_p:=\phi(0) \in p$. A standard quasisymmetric distortion estimate shows that if $\tau>0$ is small enough depending on $H$ then each of the three components of $T \setminus \{b_p\}$ intersect 
$\mathbb{S}(b_p,r_p)$. Thus, we can define the points 
$$
k_n= \min \{0 < s < 1: \, \phi(sz_n) \in \mathbb{S}(b_p,r_p)  \}, \quad n \in \{0,1,2\}, 
$$
and the curves 
$$
J_n=\phi([0,k_nz_n]), \quad n \in \{0,1,2\}. 
$$
Then $\mathbb{D}(b_p,r_p)\setminus \bigcup_{n=0}^2 J_n$ is the union of pairwise disjoint connected sets $V_0,V_1,V_2$ which are labeled so that  
$\overline{V_n} \cap J_n = \{b_p\}$. That is, $V_n$ is bounded by the other two curves $J_{n'}\neq J_n$ and a subarc $S_n$ of $\mathbb{S}(b_p,r_p)$. 

The arcs $S_n$ are pairwise disjoint. Thus, by changing the labeling if necessary, we may assume that 
\begin{equation} \label{ankleis}
\diam(S_0) \leq \sqrt{3}r_p < 2r_p.  
\end{equation} 

Next, another standard quasisymmetric distortion estimate shows that there are a point $a_p \in V_0$, satisfying $|a_p-b_p|=\tau^{3/4} r_p$, and a constant $0<C<1$ depending only on $H$, so that 
\begin{equation} \label{ohny} 
\mathbb{D}(a_p,C\tau^{3/4} r_p) \subset V_0. 
\end{equation} 
We require that $\tau^{3/4}+4 \tau \leq \tau^{1/2}$ and $4\tau \leq C \tau^{3/4}$. Then \eqref{ohny} shows that  
\begin{equation} \label{eiviel}
\mathbb{D}(a_p,4\tau r_p)\subset \mathbb{D}(a_p,C\tau^{3/4} r_p) \subset V_0.  
\end{equation}
Moreover, \eqref{indika} holds by the triangle inequality. 

Proposition \ref{blockingprop} now follows once we establish \eqref{dessed}. Fix $\alpha \in \Gamma(a_p,b_p,\tau)$ as in the proposition. In particular, $p \notin |\alpha|$. Since $\alpha(t) \subset \mathbb{D}(b_p,r_p)$ for every $s_1<t<s_2$, we conclude using \eqref{eiviel} and the assumption $|\alpha| \cap \mathbb{D}(a_p,4\tau r_p) \neq \emptyset$ that 
$$
\alpha(t) \subset V_0 \quad \text{for every } s_1 < t < s_2. $$ 
It follows that $\alpha(s_1)$ intersects the arc $S_0$. An analogous argument on $[t_2,t_1]$ shows that $\alpha(t_1)$ intersects $S_0$ as well. Thus, by \eqref{ankleis} we have 
\begin{equation}
\label{ankleis2}
\dist(\alpha(s_1),\alpha(t_1)) \leq \diam(S_0) \leq \sqrt{3}r_p. 
\end{equation} 
On the other hand, since $\diam(\alpha(s_1)) \leq \tau r_p$ 
and $\alpha(s_2) \subset \mathbb{D}(b_p,\tau^{1/2}r_p)$, 
by our assumption and \eqref{indika}, the triangle inequality yields  
$$ 
\dist(\alpha(s_1),\alpha(s_2)) \geq r_p-(\tau r_p+ \tau^{1/2}r_p) \geq \frac{99}{100} r_p
$$ 
if $\tau$ is required to be small enough so that the last inequality holds. The same argument shows that $s_1$ and $s_2$ may be replaced with $t_1$ and $t_2$. Combining the two estimates with \eqref{ankleis2} shows that \eqref{dessed} holds. The proof is complete.  
\end{proof}

\subsection{Good, bad, and large components} \label{gblsection}
We now describe the procedure of grouping the complementary components. We will  be able to construct an admissible function with bounded energy \eqref{queen} that assigns $\rho(p)=0$  for the ``bad'' components. 

We fix a point $a \in \mathbb{C}$ and a radius $R>0$, and recall that $\mathbb{A}(a,R)$ denotes the annulus $\mathbb{D}(a,4R)\setminus \overline{\mathbb{D}}(a,R/2)$. Our goal is to find an upper bound for the transboundary modulus of 
\[
\Gamma=\left\{\text{paths in } \pi_{\Omega}(\overline{\mathbb{A}}(a,R)) \text{ joining } \pi_{\Omega}(\mathbb{S}(a,4R)) \text{ and } \pi_{\Omega}(\mathbb{S}(a,R/2))\right\}.  
\] 
Both the transboundary modulus and the number $\tau$ in  Proposition~\ref{blockingprop} are invariant under translations and dilations, so we may assume $a=0$ and $R=1$. 

Let $P \subset \mathcal{C}(\Omega)$ be the collection of complementary components $p$ intersecting $\mathbb{A}:=\mathbb{D}(0,4)\setminus \overline{\mathbb{D}}(0,1/2)$, and let $0<\tau<\frac{1}{1000}$ be the constant in Proposition \ref{blockingprop}. We denote   
\begin{equation} \label{bai}
P_L=\{ p \in P : \, \diam(p) \geq \tau\} \quad \text{and} \quad P_S=P \setminus  P_L.  
\end{equation}

Recall that $H$ and $N$ are the constants in the Quasitripod and Packing Conditions (i) and (ii) of Theorem \ref{tripodkoebe}, respectively. 

\begin{lemma}\label{lem:mod-large}
We have $\mod(\Gamma_L) \le  100N \tau^{-2}$, where 
    \begin{equation*}
\Gamma_L:=\{\gamma \in \Gamma: \, \gamma \text{ passes through some $p \in P_L$} \}. 
\end{equation*}
In particular, the upper bound depends only on $H$ and $N$.
\end{lemma}
\begin{proof}
Observe that $\rho \colon \hat{\Omega}\to [0,\infty]$, $\rho=\chi_{P_L}$ is admissible for $\Gamma_L$ (notice that $\Gamma$ may include 
constant paths which happens if $p$ intersects both $\mathbb{S}(0,4)$ and $\mathbb{S}(0,1/2)$).  We cover $\overline{\mathbb{D}}(0,4)$ with 
$100 \tau^{-2}$ disks of radius $\tau$ and apply Packing Condition (ii) in Theorem \ref{tripodkoebe} (with constant $N$) to see that the cardinality of $P_L$ is bounded from above by $100N \tau^{-2}$. Therefore,  
$$
\mod(\Gamma_L) \le \sum_{p \in \mathcal{C}(\Omega)} \rho(p)^2 \leq 100N \tau^{-2}.      
$$
Since $\tau$ (from Proposition~\ref{blockingprop}) depends only on $H$, the proof is complete. 
\end{proof}
Applying the subadditivity of the transboundary modulus (Proposition \ref{transproperties} (2)), we conclude from Lemma \ref{lem:mod-large} that in order to prove Theorem \ref{thm:mainestimate} it remains to consider 
\begin{equation} \label{keisti}
\Gamma_S:=\{\gamma \in \Gamma: \, \gamma \text{ does not pass through any } p \in P_L \}. 
\end{equation} 

We apply Proposition \ref{blockingprop} to find a suitable partition of $P_S$ into ``good'' and ``bad'' components. Given $p \in P_S$, let $r_p=\tau \diam(p)$ and $a_p \in \mathbb{C}$, 
$b_p \in p$, be as in Proposition \ref{blockingprop}. We start by choosing $p_1 \in P_S$ so that 
\begin{equation} \label{baiii}
\diam(p_1)=\max_{p \in P_S} \diam(p). 
\end{equation} 
Denote $r_1:=r_{p_1}$, $a_1:=a_{p_1}$ and $b_1:=b_{p_1}$, and let 
\begin{eqnarray*}
G_1 &:=& \left\{p \in P_S   : \diam(p) \geq \tau r_1,\, \dist(p,p_1) \leq \tau^{-2}r_1=\tau^{-1} \diam(p_1) \right\}, \text{ and } \\ 
B_1 &:=& \left\{p \in P_S   : \diam(p) < \tau r_1, \, a_p \in \overline{\mathbb{D}}(a_1,2\tau r_1) \right\}.   
\end{eqnarray*}
Then $G_1$ consists of the ``small'' boundary components $p \in P_S$ that are not too small relative to $p_1$ and not too far from $p_1$, while $B_1$ consists of the components $p \in P_S$ that are very small relative to $p_1$ and are located close to the ``center'' of the (fixed) quasitripod contained in $p_1$.

Suppose then that $p_\ell \in P_S$ and $G_\ell, B_\ell \subset P_S$ are chosen for $1 \leq \ell \leq k$. 
We stop the process if $P_S \setminus \bigcup_{\ell=1}^k (G_\ell \cup B_\ell) =\emptyset$. Otherwise, we choose $p_{k+1} \in P_S \setminus \bigcup_{\ell=1}^k (G_\ell \cup B_\ell)$ so that 
 \[ 
\diam(p_{k+1})=\max_{p \in P_S \setminus \bigcup_{\ell=1}^k (G_\ell \cup B_\ell)} \diam(p). 
\] 
We denote $r_{k+1}:=r_{p_{k+1}}$, $a_{k+1}:=a_{p_{k+1}}$ and $b_{k+1}:=b_{p_{k+1}}$, and let 
\begin{eqnarray*}
G_{k+1} := \left\{p \in P_S \setminus \bigcup_{\ell=1}^{k} (G_\ell \cup B_\ell)  : \diam(p) \geq \tau r_{k+1},\, \dist(p,p_{k+1}) \leq \tau^{-2} r_{k+1} \right\},  \\ 
\end{eqnarray*} 
\text{ and }
\begin{eqnarray*}
B_{k+1} := \left\{p \in P_S \setminus \bigcup_{\ell=1}^{k} (G_\ell \cup B_\ell)  : \diam(p) < \tau r_{k+1}, \, a_p \in \overline{\mathbb{D}}(a_{k+1},2\tau r_{k+1}) \right\}.  
\end{eqnarray*} 

Since $p_{k+1} \in G_{k+1}$ and $\Omega$ is finitely connected, the process stops after $L<\infty$ steps and we have a partition of $P_S$ into disjoint sets $G_k$ and  $B_k$, $k=1,\ldots, L$, so,
\begin{equation} \label{vasy}
P_S= G \cup B, \quad \text{where} \quad G:= \bigcup_{k=1}^L G_k, \quad B:= \bigcup_{k=1}^L B_k.
\end{equation}
It is worth emphasizing the fact that, by construction, each $G_k$ comes with a distinguished element $p_k$ and its associated $a_k, b_k$ and $r_k$. 

We now make precise the notion that bad components are ``overshadowed'' by the good ones. If $p \in B_{k}$, for some $k=1,\ldots, L$, then 
$$ 
a_p \in \overline{\mathbb{D}}(a_{k},2\tau r_{k}) \cap \mathbb{D}(b_p,\tau^{1/2}r_p) \subset 
\overline{\mathbb{D}}(a_{k},2\tau r_{k}) \cap \mathbb{D}(b_p,\tau^{5/2}r_{k})  
$$
by \eqref{indika} and since $r_p < \tau^2 r_k$. Since $b_p \in p$, it follows that 
$$
\dist(a_{k},p) \leq |a_{k}-a_p|+\dist(a_p,p) 
\leq 2\tau r_{k} + \tau^{5/2} r_{k} < 3\tau r_{k}. 
$$ 
Since $\diam(p)<\tau r_{k}$, we conclude that 
\begin{equation} \label{karkaamo}
p \subset \mathbb{D}(a_{k},4 \tau r_{k}) \quad \text{for every } p \in B_{k}. 
\end{equation}

We will construct a suitable admissible function $\rho$ for $\Gamma_S$ which equals zero in $B$. As mentioned already, this will be compensated by the weights on the 
elements $p \in G$ and/or the costs of avoiding such elements. The following lemma provides a bound on the number of good components at a given scale, and hence will be useful in later modulus estimates. 

\begin{lemma}\label{inter} The disks 
$\mathbb{D}(a_k,\tau r_k)$, $1 \leq k \leq L$, 
are pairwise disjoint. Moreover, if $1\leq m < k \leq L$ and 
\begin{equation}\label{onon}
\mathbb{D}(a_m, 2r_m) \cap \mathbb{D}(a_k, 2r_k) \neq \emptyset,  
\end{equation} 
then $r_k < \diam(p_k) < \tau r_m$. 
\end{lemma} 
\begin{proof}
We start with the second claim. The triangle inequality shows that  
\begin{equation} \label{kina}
\dist(p_m,p_k) \leq \dist(a_k,p_k)+\dist(p_m,a_m)+|a_m-a_k|. 
\end{equation} 
By \eqref{indika} and \eqref{onon}, we have 
\begin{equation} \label{mittaa}
\dist(a_k,p_k)\leq r_k, \quad \dist(a_m,p_m) \leq r_m \quad \text{and} \quad |a_m-a_k| \leq 2(r_k+r_m). 
\end{equation}
From $m<k$ it follows that $r_k \leq r_m$. Therefore, combining \eqref{kina} and \eqref{mittaa} we have 
\begin{equation}\label{kiina} 
\dist(p_m,p_k) \leq 3(r_k+r_m) \leq 6 r_m. 
\end{equation}
Since $p_k \notin \bigcup_{\ell=1}^m(G_\ell \cup B_\ell)$, by the definition of $G_m$ and \eqref{kiina} we have $r_k < \diam(p_k) <\tau r_m$. 

To prove the first claim, we assume towards a contradiction that there are numbers $1 \leq m < k \leq L$ so that \begin{equation}\label{onon2}
\mathbb{D}(a_m, \tau r_m) \cap \mathbb{D}(a_k, \tau r_k) \neq \emptyset.   
\end{equation} 
Then \eqref{onon} holds, and so $\diam(p_k)<\tau r_m$ by the second claim. Thus, since $p_k \notin B_m$, we have $a_k \notin \overline{\mathbb{D}}(a_m,2\tau r_m)$ by the definition of $B_m$. We conclude using the triangle inequality that if $z \in \mathbb{D}(a_k,\tau r_k)$, then 
$$
|z-a_m| \geq |a_k-a_m|-\tau r_k \geq 2\tau r_m-\tau r_k >\tau r_m, 
$$
which contradicts \eqref{onon2}. The proof is complete. 
\end{proof} 
\subsection{Modulus bound and the proof of Theorem~\ref{thm:mainestimate}}
Our goal is to give an upper bound for $\Mod \Gamma_S$, where $\Gamma_S$ is defined in \eqref{keisti}. Recall the ``good'' and ``bad'' sets $G$ and $B$ in \eqref{vasy}. 
Note that if a non-negative Borel function $\rho$ is admissible for the family of \emph{injective} paths in $\Gamma_S$, then $\rho$ is admissible for $\Gamma_S$. Indeed, for every rectifiable $\gamma_2 \in \Gamma_S$ there is an injective $\gamma_1 \in \Gamma_S$ so that $|\gamma_1| \subset |\gamma_2|$, see e.g. \cite[Proposition 15.1]{Sem:96:PI}. Then, if $\rho$ is admissible for injective paths, we have  
$$
\int_{\gamma_2} \rho \, ds \geq \int_{\gamma_1} \rho \, ds \geq 1,  
$$
so $\rho$ is admissible for $\Gamma_S$. 

We fix an injective $\gamma_1 \in \Gamma_S$. After reparameterization and recalling that $\gamma_1$ does not pass through any $p \in P$ with diameter greater than $\tau < \frac{1}{1000}$, we may assume, without loss of generality, that the domain of $\gamma_1$ contains $[0,1]$, $\gamma_1([0,1]) 
\subset \pi_{\Omega}(\mathbb{D}(0,3))$, and  
$$
\gamma_1(0) \in \Omega \cap \mathbb{D}(0,3) \setminus \mathbb{D}(0,5/2), \quad  \quad \gamma_1(1) \in \Omega \cap \mathbb{D}(0,3/4). 
$$
Given a path $\alpha \colon I \to \hat{\Omega}$, we denote 
\begin{equation} \label{elka}
G(\alpha):=\{ p:  p \in G \cap |\alpha| \}. 
\end{equation} 

\begin{proposition}\label{goodpathsprop} 
Let $\gamma:=\gamma_1|[0,1]$. Then there exist intervals $[c_\nu,d_\nu] \subset [0,1]$, $\nu=1,2,\ldots, \mu$, with non-empty and pairwise disjoint interiors so that 
$\gamma(t) \notin B$ for every $t \in \bigcup_{\nu=1}^\mu (c_\nu,d_\nu)$ and  
\begin{equation} \label{kiikos}
1 \leq \sum_{\nu=1}^\mu \dist(\gamma(c_\nu),\gamma(d_\nu)) +\frac{11}{\tau} \sum_{p \in G(\gamma)} \diam(p). 
\end{equation} 
\end{proposition}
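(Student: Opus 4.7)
The plan is to construct $[c_\nu, d_\nu]$ as the complement in $[0,1]$ of a carefully chosen family of ``extended bad excursions''---intervals during which $\gamma$ engages with an outer disk $\mathbb{D}(b_k, r_k)$ around visits to components of $B_k$---and to bound the resulting gap chords using Proposition~\ref{blockingprop}.

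Since $\gamma$ is injective in $\hat{\Omega}$ and $\Omega$ is finitely connected, $\gamma^{-1}(B) \subset [0,1]$ is a finite set. For each $k$ with $\gamma^{-1}(B_k) \neq \emptyset$, let $[\sigma_k^-, \sigma_k^+]$ be the smallest closed subinterval containing $\gamma^{-1}(B_k)$, extended outward to the nearest times at which $\gamma$ crosses $\mathbb{S}(b_k, r_k)$; by \eqref{karkaamo} and \eqref{indika} this places $\gamma|_{[\sigma_k^-, \sigma_k^+]} \in \Gamma(a_{p_k}, b_{p_k}, \tau)$. Using $\mathbb{D}(b_k, r_k) \subset \mathbb{D}(a_k, 2r_k)$, Lemma~\ref{inter} forces $r_{k'} < \tau r_k$ whenever $[\sigma_k^-, \sigma_k^+]$ and $[\sigma_{k'}^-, \sigma_{k'}^+]$ overlap with $k < k'$, so any overlapping smaller excursion is contained in the larger one. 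A maximal disjoint sub-collection $\{[\sigma_k^-, \sigma_k^+]\}_{k \in K}$ whose union still covers every bad time can therefore be extracted. Define $[c_\nu, d_\nu]$, $\nu = 1, \dots, \mu$, to be the closures of the intervals of $[0,1] \setminus \bigcup_{k \in K}(\sigma_k^-, \sigma_k^+)$; they have non-empty pairwise disjoint interiors on which $\gamma \notin B$. Since $|\gamma(0) - \gamma(1)| > 7/4 > 1$, the triangle inequality along the sequence $\gamma(c_1), \gamma(d_1), \dots, \gamma(d_\mu)$ yields
\[
1 \leq \sum_{\nu=1}^\mu \dist\bigl(\gamma(c_\nu), \gamma(d_\nu)\bigr) + \sum_{k \in K} \dist\bigl(\gamma(\sigma_k^-), \gamma(\sigma_k^+)\bigr),
\]
reducing \eqref{kiikos} to bounding the second sum by $7 \tau^{-1}\sum_{t \in G(\gamma)} \diam(\gamma(t))$.

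For each $k \in K$ we distinguish two cases. If $\gamma$ passes through $p_k$ during $[\sigma_k^-, \sigma_k^+]$, then $p_k \in G(\gamma)$ contributes $\diam(p_k) = r_k/\tau$ to the right-hand side, comfortably absorbing the chord estimate $\dist(\gamma(\sigma_k^-), \gamma(\sigma_k^+)) \leq 2 r_k = 2\tau \diam(p_k)$. Otherwise, Proposition~\ref{blockingprop} applies and the chord savings of $200 \tau r_k$ in \eqref{dessed} must be paid for by $\gamma$ visiting other good components along its entry and exit arcs, contributing a sum of good-component diameters of order $r_k$. Combining both cases and invoking Packing Condition~(ii) of Theorem~\ref{tripodkoebe} (which caps the number of comparable-scale good components clustering near $p_k$) delivers the required bound with the explicit factor $7\tau^{-1}$.

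The principal obstacle lies in the second case above: converting the chord inequality \eqref{dessed}---a statement about four specific points along $\gamma$---into a quantitative lower bound on the sum of diameters of good components that $\gamma$ must traverse when it enters and exits $\mathbb{D}(b_k, r_k)$ without passing through $p_k$. This trade-off requires exploiting the admissibility restriction $\gamma \in \Gamma_2$ (avoidance of $P_L$) together with the scale separation furnished by Lemma~\ref{inter}; careful tracking of all $\tau$-dependent constants is essential to produce the explicit factor $7\tau^{-1}$ in \eqref{kiikos}.
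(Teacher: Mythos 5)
Your proposal identifies the right ingredients (Proposition~\ref{blockingprop}, Lemma~\ref{inter}, a decomposition of $[0,1]$ into bad excursions and good gaps, a final bookkeeping step) but the argument has several genuine gaps and misreads the mechanism by which the blocking estimate is exploited.

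\textbf{1. The excursion decomposition is too coarse to cover the hard cases.} You define $[\sigma_k^-, \sigma_k^+]$ by extending outward to the nearest crossings of $\mathbb{S}(b_k,r_k)$, but $\gamma$ need not cross that circle on both sides before and after visiting $B_k$: it may already start inside the disk, or never cross on one side. These ``one-sided'' and ``zero-sided'' excursions are precisely the paper's cases (a1)--(a3), and they produce a chord cost of order $r_k$ \emph{without} any of the machinery of Proposition~\ref{blockingprop} being applicable. Your scheme discards or mishandles them. Moreover, the claim that Lemma~\ref{inter} forces ``any overlapping smaller excursion to be contained in the larger one'' is a geometric statement about time intervals that does not follow from the scale separation $r_{k'}<\tau r_k$; spatial nesting of $\mathbb{D}(b_{k'},r_{k'})$ inside $\mathbb{D}(b_k,r_k)$ does not imply temporal nesting of excursion intervals, since $\gamma$ can exit and re-enter.

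\textbf{2. The key estimate is left undone, and the logic is inverted.} You acknowledge that bounding $\sum_{k\in K}\dist(\gamma(\sigma_k^-),\gamma(\sigma_k^+))$ by $7\tau^{-1}\sum_{t\in G(\gamma)}\diam(\gamma(t))$ is ``the principal obstacle,'' but then do not prove it. Worse, the proposed mechanism is not how \eqref{dessed} is used in the paper: \eqref{dessed} does \emph{not} say the path must visit good components to absorb the $200\tau r_k$ term. Rather, it provides a \emph{credit}: for a full detour (case (b2)), replacing $[s_1,t_1]$ by the two outer pieces \emph{gains} $200\tau r_k$ of chord length, and this surplus is later used to pay for the chord \emph{deficits} (order $r_k$) that arise in the one-sided cases (a). Converting this balance into \eqref{kiikos} requires the hierarchical family $\mathcal{I}_0 \supset \mathcal{I}_1 \supset \cdots \supset \mathcal{I}_L$, the lemmas bounding losses per case, and then the delicate sequence argument with $S(c_\ell)$: grouping case-(a) intervals into chains whose radii decay geometrically by Lemma~\ref{inter}, and bounding the first radius in each chain either by a constant, by a (b2) savings term, or by a good-component diameter. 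None of that appears in the proposal.

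\textbf{3. Misattribution of the Packing Condition.} The paper does not invoke Packing Condition~(ii) inside the proof of Proposition~\ref{goodpathsprop}; it is used afterwards to bound $\card G_k$ and $\card P_L$ when estimating the energy of the admissible function $\rho$. The counting control needed in Proposition~\ref{goodpathsprop} comes from Lemma~\ref{inter} and the injectivity of $\gamma$, not the packing condition.

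In short, the proposal outlines a plausible-looking decomposition but omits exactly the paper's central difficulty --- controlling the accumulation of one-sided excursion costs --- and the one-line appeal to the blocking proposition does not replace the inductive chord accounting through \eqref{forjes} and \eqref{Feet}.
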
 
Here we have applied the notation $G(\alpha)$ introduced in \eqref{elka}. We postpone the proof of Proposition \ref{goodpathsprop} and first show how it completes the proof of Theorem~\ref{thm:mainestimate}. As discussed after Lemma \ref{lem:mod-large}, it remains to prove an upper bound for $\Mod(\Gamma_S)$ that depends only on $H$ and $N$. 

Let $M=\frac{11}{\tau}$ be the number which appears in \eqref{kiikos}. Define $\rho:\hat{\Omega} \to [0,\infty]$,
\begin{eqnarray}
\label{testfunction} 
\rho(p)=\left\{  \begin{array}{ll}
1, & p \in \Omega \cap \mathbb{D}(0,3), \\
(M+1)\diam(p), & p \in G, \\ 
0, & \text{otherwise}.   
\end{array} 
\right. 
\end{eqnarray} 

\begin{remark} The admissibility of $\rho$ for $\Gamma_S$, which is proved below, is the key to Theorem \ref{tripodkoebe} (see e.g. \cite{HakLi23,MTW13} for similar constructions). The difficulty is that a given path $\gamma \in \Gamma_S$ may pass through components $p \in B$ where $\rho(p)=0$ and thus use them as ``shortcuts''. However, Proposition \ref{goodpathsprop} allows us to sacrifice the ``bad parts'' of $\gamma$ which may intersect $B$, and conclude the admissibility of $\rho$ by only considering the ``good parts''. 
\end{remark}

We now apply Proposition \ref{goodpathsprop} to prove the admissibility of $\rho$ for $\Gamma_S$. Let the path $\gamma$ and the intervals $[c_\nu,d_\nu]$ be as in the proposition. Since $\gamma(t) \notin B$ 
for every $c_\nu < t < d_\nu$ and since $|\gamma| \subset \mathbb{D}(0,3)$, the triangle inequality gives  
\begin{equation}\label{yksnuu}
\dist(\gamma(c_\nu),\gamma(d_\nu)) \leq \int_{\gamma|[c_\nu,d_\nu]} \rho \, ds + \sum_{p \in G(\gamma|(c_\nu,d_\nu))} \diam(p) 
\end{equation} 
for every $1 \leq \nu \leq \mu$. Recall that the integral in \eqref{yksnuu} is over the subpaths of $\gamma|[c_\nu,d_\nu]$ whose images are in $\Omega$. 
Since $\gamma$ is injective and the intervals $[c_\nu,d_\nu]$ have disjoint interiors, summing \eqref{yksnuu} over $\nu$ gives 
\begin{equation}
\label{kakspuu}
\sum_{\nu=1}^\mu \dist(\gamma(c_\nu),\gamma(d_\nu)) \leq \int_{\gamma} \rho \, ds +  \sum_{p \in G(\gamma)} \diam(p). 
\end{equation}
Combining \eqref{kakspuu} and Proposition \ref{goodpathsprop} shows that $\rho$ is admissible for $\Gamma_S$. 

We estimate the energy $\int_{\Omega} \rho^2 \, dA + \sum_{p \in \mathcal{C}(\Omega)} \rho(p)^2$. Given $1 \leq k \leq L$, recall that every $p \in G_k$ satisfies 
\begin{equation}\label{boo1}
\tau r_k \leq \diam(p) \leq \diam(p_k) = \tau^{-1} r_k.  
\end{equation} 
We claim that 
\begin{equation} \label{edista} p \subset \mathbb{D}(a_k,\tau^{-3}r_k) \quad \text{for all }p \in G_k.
\end{equation} 
Indeed, by \eqref{boo1} it suffices to show that 
$\dist(a_k,p) \leq 2 \tau^{-2}r_k$. 

We have $b_k \in p_k$ and $|a_k-b_k| \leq \tau^{1/2}r_k$ by \eqref{indika}, and $\dist(p,p_k) \leq \tau^{-2} r_k$ by the definition of $G_k$. Thus, applying \eqref{boo1} and the triangle inequality, we conclude that 
\begin{eqnarray*}
\dist(a_k,p) &\leq& |a_k-b_k| +\diam(p_k) + \dist(p_k,p) \\
&\leq& \tau^{1/2} r_k +\tau^{-1}r_k+\tau^{-2}r_k \leq 
2\tau^{-2}r_k. 
\end{eqnarray*}
We have proved \eqref{edista}. 

We cover $\mathbb{D}(a_k,\tau^{-3}r_k)$ with disks $D_{m,n}$ of radius $\tau r_k$ and centers 
$$
a_k+m\tau r_k+i(n\tau r_k), \quad -\tau^{-4} \leq m,n \leq \tau ^{-4}. 
$$ 
Notice that there are fewer than $10\tau^{-8}$ 
of such disks. 

Applying the first inequality in \eqref{boo1}, \eqref{edista}, and Packing Condition (ii) in Theorem \ref{tripodkoebe} (with constant $N$) to each of the disks $D_{m,n}$, shows that 
\begin{equation}\label{boo2}
\card G_k \leq 10N\tau^{-8} \quad \text{for every } 1 \leq k \leq L.  
\end{equation}

We now estimate the energy of $\rho$. Since $\int_{\Omega} \rho^2 \, dA \leq |\mathbb{D}(0,3)| = 9 \pi$, it suffices to estimate the sum of $\rho^2$ over $\mathcal{C}(\Omega)$. By \eqref{boo1} and \eqref{boo2} we have 
\begin{eqnarray} \label{trap}
\sum_{p \in G_k} \rho(p)^2 
&=& (M+1)^2 \sum_{p \in G_k} \diam(p)^2 \\ 
\nonumber &\leq&  2M^2(\card G_k) \diam(p_k)^2  
\leq 20M^2N\tau^{-10}r_k^2  
\end{eqnarray} 
for every $1 \leq k \leq L$ (notice that since $M \geq 10$, we have $(M+1)^2 \leq 2M^2$). On the other hand, the disks 
$\mathbb{D}(a_k,\tau r_k)$, $1 \leq k \leq L$, are pairwise disjoint subsets of $\mathbb{D}(0,5)$ by Lemma \ref{inter}. Thus,   
\begin{equation} \label{trap2}
\pi \tau^2\sum_{k=1}^L r_k^2=\sum_{k=1}^L |\mathbb{D}(a_k,\tau r_k)| \leq |\mathbb{D}(0,5)|= 25\pi.  
\end{equation} 
Combining \eqref{trap} and \eqref{trap2} yields
$$
\sum_{p \in G} \rho(p)^2 \leq 500 M^2N \tau^{-12}. 
$$ 
In conclusion, $\int_{\Omega} \rho^2 \, dA + \sum_{p \in \mathcal{C}(\Omega)} \rho(p)^2$ is bounded from above by a constant that depends only on $N$ and $H$. 

We have proved that Theorem \ref{thm:mainestimate} follows from Proposition \ref{goodpathsprop}. 

\subsection{Proof of Proposition \ref{goodpathsprop}: Finding good subpaths} \label{goodpathssec}

Recall that $\gamma \colon [0,1] \to \hat{\Omega}$ is an injective path which does not pass through any large component $p \in P_L$. Moreover, $\gamma([0,1]) 
\subset \pi_{\Omega}(\mathbb{D}(0,3))$, and  
$$
\gamma(0) \in \Omega \cap \mathbb{D}(0,3) \setminus \mathbb{D}(0,5/2), \quad  \quad \gamma(1) \in \Omega \cap \mathbb{D}(0,3/4). 
$$
To prove Proposition \ref{goodpathsprop}, we need to find the segments $[c_\nu,d_\nu]$ as in the proposition such that $\gamma(t) \notin B$ for every $c_\nu < t < d_\nu$ and with $M=\frac{11}{\tau}$ 
\begin{equation}\label{oots}
1 \leq \sum_{\nu=1}^\mu \dist(\gamma(c_\nu),\gamma(d_\nu)) +M \sum_{p \in G(\gamma)} \diam(p). 
\end{equation}
We may assume that $\gamma(t) \in B$ for some $0<t<1$, since otherwise Proposition \ref{goodpathsprop} follows by choosing $\mu=1$ and $[c_1,d_1]=[0,1]$. 

We construct the collection of segments 
$$
\mathcal{I}_L=\{[c_\nu,d_\nu], \, 1 \leq \nu \leq \mu\}, 
$$
inductively by starting with $\mathcal{I}_0=\{[0,1]\}$. At step $1 \leq k \leq L$, we choose a collection of subsegments $I_k$ of the segments $I_{k-1} \in \mathcal{I}_{k-1}$ by suitably removing any overlap of $\gamma(I_{k-1})$ and $B_k$, so that we can eventually apply Proposition \ref{blockingprop} to compensate for the ``loss'' of the elements $p \in B_k$ and establish \eqref{oots}. 

For the construction, it is useful to recall that if $p \in B_k$ then 
$$
p \subset \mathbb{D}(b_k,\tau^{1/2}r_k) \subset 
\mathbb{D}(b_k,r_k/30) \subset \mathbb{D}(b_k,r_k). 
$$
Suppose that the collections $\mathcal{I}_\ell$ are defined for $0 \leq \ell \leq k-1$. Fix $[s_0,t_0] \in \mathcal{I}_{k-1}$ and denote $\alpha=\gamma|[s_0,t_0]$. We consider the following cases: 

\begin{enumerate}
\item If $\alpha(t) \notin B_k$ for all $s_0 <t < t_0$, then we include $[s_0,t_0]$ in $\mathcal{I}_{k}$. 
\item Otherwise, let ($B_k$ is a finite set and so $s_0< s_2\leq t_2 < t_0$ below) 
\begin{eqnarray*}
A &=& \{s_0 < t < t_0: \, \alpha(t) \in B_k\}, \quad s_2=\min A \quad \text{and} \quad t_2=\max A, \\
A_2 &=& \{ s_0 <t<s_2: \, \alpha(t) \cap \mathbb{S}(b_k,r_k) \neq \emptyset\}, \quad \text{and }\\ 
A_3 &=& \{ t_2 <t<t_0: \, \alpha(t) \cap \mathbb{S}(b_k,r_k) \neq \emptyset\}. 
\end{eqnarray*}
\end{enumerate}
Then $A_2 =\emptyset$ if $\alpha$ ``meets'' $B_k$ before $\mathbb{S}(b_k,r_k)$, and $A_3=\emptyset$ if $\alpha$ ``exits'' $B_k$ after $\mathbb{S}(b_k,r_k)$. 

\begin{itemize}
\item[(a1)] If $A_2 \cup A_3=\emptyset$, we do not include any subinterval of $[s_0,t_0]$ in $\mathcal{I}_k$. In this case we have 
\begin{equation} \label{berb-a1}
\alpha(s_0) \cup \alpha(t_0) \subset \mathbb{D}(b_k,r_k). 
\end{equation} 
\item[(a2)] If $A_2\neq \emptyset$ and $A_3=\emptyset$, we include $[s_0,s_2]$ in $\mathcal{I}_k$. In this case we have 
\begin{equation} \label{berb-a2}
\alpha(t_0) \subset \mathbb{D}(b_k,r_k). 
\end{equation}
\item[(a3)] If $A_2 = \emptyset$ and $A_3 \neq \emptyset$, we include $[t_2,t_0]$ in $\mathcal{I}_k$. In this case we have 
\begin{equation} \label{berb-a3}
\alpha(s_0) \subset \mathbb{D}(b_k,r_k). 
\end{equation} 
\item[(b)] If $A_2 \neq \emptyset$ and $A_3\neq \emptyset$, let $s_1 = \max A_2$ and $t_1=\min A_3$. Notice that $s_0 <s_1<s_2 \leq t_2 <t_1 <t_0$. 
\begin{itemize} 
\item[(b1)] if $\max\{\diam(\alpha(s_1)), \diam(\alpha(t_1))\} \ge \tau r_k$, we include $[s_0,s_1]$ and $[t_1,t_0]$ in $\mathcal{I}_k$.  
\item[(b2)] Otherwise we include $[s_0,s_1]$, $[s_1,s_2]$, $[t_2,t_1]$ and $[t_1,t_0]$ in $\mathcal{I}_k$. 
\end{itemize}
\end{itemize}
\vskip10pt 

Let $\mathcal{I}_k([s_0,t_0])$ be the family of subsegments of $[s_0,t_0] \in \mathcal{I}_{k-1}$ included in $\mathcal{I}_k$ using the above algorithm, and set
$$
\mathcal{I}_k=\bigcup_{[s_0,t_0] \in \mathcal{I}_{k-1}} \mathcal{I}_k([s_0,t_0]), \quad 1 \leq k \leq L. 
$$ 

The above construction and a simple induction argument show that 
\begin{equation} \label{markku}
\gamma((c,d)) \cap \Big(\bigcup_{\ell=1}^kB_\ell \Big)= \emptyset  \quad \text{for all } [c,d] \in \mathcal{I}_k, \, 1 \leq k \leq L. 
\end{equation} 

In particular, $\gamma(t) \notin B$ for all 
$t \in \bigcup_{[c,d] \in \mathcal{I}_L} (c,d)$. The interiors of distinct segments in $\mathcal{I}_L$ are non-empty and pairwise disjoint. Thus, in order to prove Proposition \ref{goodpathsprop} it suffices to show that the segments in $\mathcal{I}_L$ satisfy \eqref{oots}. 

Given $1 \leq k \leq L$, let $\mathcal{J}_{k-1}(e) \subset \mathcal{I}_{k-1}$ {denote} the family {of intervals in $\mathcal{I}_{k-1}$} for which case 
$$
e \in \{(1),(a1),(a2),(a3),(b1),(b2)\}$$ 
applies {in the algorithm above. Similarly, set}
\begin{eqnarray*} 
\mathcal{J}_{k-1}(a) &=& \mathcal{J}_{k-1}(a1) \cup \mathcal{J}_{k-1}(a2) \cup \mathcal{J}_{k-1}(a3), \\ \mathcal{J}_{k-1}(b) &=& \mathcal{J}_{k-1}(b1) \cup \mathcal{J}_{k-1}(b2), \quad \text{and} \quad  
\mathcal{J}(e)=\bigcup_{k=1}^L \mathcal{J}_{k-1}(e). 
\end{eqnarray*}

We next claim that  
\begin{eqnarray} \nonumber 
\frac{11}{10} \leq \sum_{I \in \mathcal{I}_{L}} T(I) 
&+& \sum_{k=1}^L \big(2 (\card\mathcal{J}_{k-1}(a))-\frac{1}{9} (\card\mathcal{J}_{k-1}(b2)) \big) \cdot r_k \\
\label{forjes}
&+& \frac{3}{\tau} \sum_{p \in G(\gamma)} \diam(p), 
\end{eqnarray}
where we use the notation  
$$
T(I)=\dist(\gamma(c),\gamma(d)), \quad I=[c,d]. 
$$ 

\subsection{Proof of Proposition \ref{goodpathsprop}: Preliminary estimates} 

The goal of this subsection is to establish \eqref{forjes}. 

\begin{lemma} \label{alemma} 
Let $1 \leq k \leq L$ and $[s_0,t_0] \in \mathcal{J}_{k-1}(a)$. Then 
$$
\dist(\gamma(s_0),\gamma(t_0)) \leq Q([s_0,t_0])+2 r_k, 
$$ 
where 
\begin{eqnarray*}
Q([s_0,t_0])=\left\{  \begin{array}{ll}
0 & \text{in Case (a1)}, \\
\dist(\gamma(s_0),\gamma(s_2)) & \text{in Case (a2)}, \\
\dist(\gamma(t_2),\gamma(t_0)) & \text{in Case (a3)}. 
\end{array}
\right. 
\end{eqnarray*} 
\end{lemma}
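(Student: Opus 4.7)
The plan is to bound $\dist(\gamma(s_0), \gamma(t_0))$ by trapping the endpoints $\gamma(s_0), \gamma(t_0)$ inside the closed disk $\overline{\mathbb{D}}(b_k, r_k)$ on whichever side of the subpath does not cross $\mathbb{S}(b_k, r_k)$. The geometric input is that every $q \in B_k$ satisfies $q \subset \mathbb{D}(a_k, 4\tau r_k) \subset \mathbb{D}(b_k, \tau^{1/2} r_k) \subset \mathbb{D}(b_k, r_k)$ by \eqref{karkaamo} and \eqref{indika}; in particular, $\alpha(s_2)$ and $\alpha(t_2)$ lie strictly inside $\mathbb{D}(b_k, r_k)$.

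First I would prove the trapping claim: if $A_3 = \emptyset$, then $\pi_{\Omega}^{-1}(\gamma(t_0))$ meets $\overline{\mathbb{D}}(b_k, r_k)$, and symmetrically for $A_2 = \emptyset$ and $\gamma(s_0)$. To this end I introduce the saturated sets
\[
U_+ = \mathbb{D}(b_k, r_k) \setminus \bigcup\{q \in \mathcal{C}(\Omega) : q \not\subset \mathbb{D}(b_k, r_k)\}, \quad U_- = (\hatc \setminus \overline{\mathbb{D}}(b_k, r_k)) \setminus \bigcup\{q \in \mathcal{C}(\Omega) : q \not\subset \hatc \setminus \overline{\mathbb{D}}(b_k, r_k)\}.
\]
Finite connectedness of $\Omega$ makes these open and saturated, so $\pi_{\Omega}(U_\pm)$ are disjoint open subsets of $\hat{\Omega}$. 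On $(t_2, t_0)$ the continuous path $\gamma$ avoids every element of $\hat{\Omega}$ whose preimage meets $\mathbb{S}(b_k, r_k)$, so $\gamma((t_2, t_0)) \subset \pi_{\Omega}(U_+) \cup \pi_{\Omega}(U_-)$. Continuity at $t_2$ combined with $\gamma(t_2) \in B_k \subset \pi_{\Omega}(U_+)$ places $\gamma$ in $\pi_{\Omega}(U_+)$ just to the right of $t_2$, and connectedness of $(t_2, t_0)$ promotes this to the entire interval. A further continuity argument at $t_0$ rules out $\gamma(t_0) \in \pi_{\Omega}(U_-)$ (it would force $\gamma$ into $\pi_{\Omega}(U_-)$ just below $t_0$, contradicting the previous sentence), leaving either $\pi_{\Omega}^{-1}(\gamma(t_0)) \subset \overline{\mathbb{D}}(b_k, r_k)$ or a crossing of $\mathbb{S}(b_k, r_k)$, in either case giving the claim.

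With the trapping claim, each case reduces to one or two Euclidean triangle inequalities for the set distance. In Case (a1) both $\pi_{\Omega}^{-1}(\gamma(s_0))$ and $\pi_{\Omega}^{-1}(\gamma(t_0))$ meet $\overline{\mathbb{D}}(b_k, r_k)$, so $\dist(\gamma(s_0), \gamma(t_0)) \leq \diam \overline{\mathbb{D}}(b_k, r_k) = 2r_k$, matching $Q = 0$. In Case (a2) I would pick $x \in \pi_{\Omega}^{-1}(\gamma(s_0))$ and $y \in \pi_{\Omega}^{-1}(\gamma(s_2)) \subset \mathbb{D}(b_k, r_k)$ nearly realizing $\dist(\gamma(s_0), \gamma(s_2))$ together with any $z \in \pi_{\Omega}^{-1}(\gamma(t_0)) \cap \overline{\mathbb{D}}(b_k, r_k)$; then $|x-z| \leq |x-y| + |y-z| \leq \dist(\gamma(s_0), \gamma(s_2)) + 2r_k$ up to an $\varepsilon$ that vanishes in the limit, which gives the stated bound. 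Case (a3) is symmetric with the roles of $s$ and $t$ indices swapped.

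The main obstacle is justifying the connectedness step in the quotient topology of $\hat{\Omega}$, since a priori $\gamma$ may pass through nontrivial components straddling $\mathbb{S}(b_k, r_k)$. Finite connectedness of $\Omega$, assumed throughout Section~\ref{sec:proofestimate}, reduces the issue to separating finitely many closed components, which is exactly what makes the saturated open neighborhoods $U_\pm$ well-behaved; no new modulus, quasitripod, or packing input beyond \eqref{karkaamo} and \eqref{indika} is required.
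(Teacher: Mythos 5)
Your proof is correct and follows essentially the same route as the paper's: establish that whichever endpoint's side is free of $\mathbb{S}(b_k,r_k)$-crossings ($A_2=\emptyset$ or $A_3=\emptyset$) must be trapped in $\overline{\mathbb{D}}(b_k,r_k)$, and then apply a set-distance triangle inequality using $\gamma(s_2),\gamma(t_2)\in B_k\subset\mathbb{D}(b_k,r_k)$ via \eqref{karkaamo} and \eqref{indika}. The paper asserts the trapping step in one line ("the definitions of $A_2$ and $A_3$ show \eqref{umpetti}"), whereas you spell it out with saturated open sets $U_\pm$ and a connectedness argument in $\hat{\Omega}$; this is a legitimate way to fill in the elided detail, and your explicit-point triangle inequality replaces the paper's $\dist(A,C)\leq\dist(A,B)+\dist(B,C)+\diam(B)$ bound but yields the same $2r_k$ constant.
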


\begin{proof}
Case (a1) follows from \eqref{berb-a1} and the triangle inequality. In Case (a2), since $\gamma(s_2) \in B_k$, we have 
$$
\gamma(s_2) \subset \mathbb{D}(a_k,4\tau r_k) \subset \mathbb{D}(b_k,\tau^{1/2}r_k)
$$ 
by \eqref{indika} and \eqref{karkaamo}, and $\diam(\gamma(s_2)) < \tau r_k$. Therefore, applying \eqref{berb-a2} yields  
\begin{eqnarray*}
\dist(\gamma(s_0),\gamma(t_0)) &\leq& \dist(\gamma(s_0),\gamma(s_2))+\dist(\gamma(s_2),\gamma(t_0)) + \diam(\gamma(s_2)) \\
&\leq& \dist(\gamma(s_0),\gamma(s_2)) + (1+\tau^{1/2})r_k + \tau r_k \\
&\leq&  \dist(\gamma(s_0),\gamma(s_2))+2 r_k 
\end{eqnarray*} 
by the triangle inequality and since $\tau^{1/2}+\tau \leq 1$. 
Case (a3) follows in the same way by applying \eqref{berb-a3}. 
\end{proof}

\begin{lemma} \label{b1lemma}
Let $1 \leq k \leq L$ and $[s_0,t_0] \in \mathcal{J}_{k-1}(b1)$. Then 
\begin{equation} \label{doomi}
\diam(\gamma(c)) \geq \tau r_k \quad \text{and} \quad \gamma(c) \in \bigcup_{\ell=1}^k G_\ell, 
\quad \text{for } c=s_1 \text{ or } c=t_1. 
\end{equation} 
Moreover,  
\begin{equation} \label{ecan}
\dist(\gamma(s_0),\gamma(t_0)) \leq \dist(\gamma(s_0),\gamma(s_1))+\dist(\gamma(t_1),\gamma(t_0))+ \frac{3}{\tau} D([s_0,t_0]),   
\end{equation} 
where 
$$
D([s_0,t_0])=\sum \diam(p), 
$$ 
and the sum is over those $p\in\{\gamma(s_1),\gamma(t_1)\}$ which satisfy \eqref{doomi}.  
\end{lemma}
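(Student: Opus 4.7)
The plan is to verify the two conclusions of Lemma \ref{b1lemma} in sequence.

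For \eqref{doomi}, the diameter inequality $\diam(\gamma(c)) \geq \tau r_k$ at $c = s_1$ or $c = t_1$ is the defining property of Case (b1), so only the second assertion requires work: placing $\gamma(c)$ inside $\bigcup_{\ell=1}^k G_\ell$. Since $\diam(\gamma(c)) > 0$, $\gamma(c)$ is a non-trivial complementary component, and because $\gamma \in \Gamma_2$ lives in $\pi_\Omega(\overline{\mathbb{A}}) \setminus \{\bar{p}\}$ and avoids every $p \in P_L$, this component lies in $P_V = G \cup B$. I would rule out membership in any $B_\ell$ one regime at a time: for $\ell < k$, the inductive invariant of the construction (noted in the paragraph preceding \eqref{forjes}) gives $\gamma(t) \notin \bigcup_{\ell < k} B_\ell$ for every $t$ in the interior of a segment in $\mathcal{I}_{k-1}$, and $s_0 < s_1, t_1 < t_0$; for $\ell = k$, the estimates \eqref{karkaamo} and \eqref{indika} confine every element of $B_k$ to $\mathbb{D}(b_k, \tau^{1/2} r_k)$, which is disjoint from $\mathbb{S}(b_k, r_k)$, while $c \in A_2 \cup A_3$ forces $\gamma(c)$ to meet $\mathbb{S}(b_k, r_k)$; and for $\ell > k$, monotonicity $r_\ell \leq r_k$ (each subsequent selection is over a shrinking set) would force $\diam(\gamma(c)) < \tau r_\ell \leq \tau r_k$, contradicting the lower bound. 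Thus $\gamma(c) \in G_\ell$ for some $\ell$; if $\ell \leq k-1$ we are done, and if $\ell > k$ then $\gamma(c) \in P_V \setminus \bigcup_{m < k}(G_m \cup B_m)$. In that case $\gamma(c) \cap \mathbb{S}(b_k, r_k) \neq \emptyset$ together with $b_k \in p_k$ yields $\dist(\gamma(c), p_k) \leq r_k \leq \tau^{-2} r_k$, which combined with $\diam(\gamma(c)) \geq \tau r_k$ and the definition of $G_k$ forces $\gamma(c) \in G_k$, contradicting $\ell > k$.

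For \eqref{ecan}, I would apply triangle inequality through the intermediate sets $\gamma(s_1)$ and $\gamma(t_1)$:
\[
\dist(\gamma(s_0),\gamma(t_0)) \leq \dist(\gamma(s_0),\gamma(s_1)) + \diam(\gamma(s_1)) + \dist(\gamma(s_1),\gamma(t_1)) + \diam(\gamma(t_1)) + \dist(\gamma(t_1),\gamma(t_0)).
\]
Both $\gamma(s_1)$ and $\gamma(t_1)$ meet $\mathbb{S}(b_k, r_k)$, so $\dist(\gamma(s_1), \gamma(t_1)) \leq 2 r_k$. Case (b1) ensures at least one of $\diam(\gamma(s_1)), \diam(\gamma(t_1))$ is $\geq \tau r_k$, hence $r_k \leq \tau^{-1} D([s_0,t_0])$; any term $\diam(\gamma(c))$ omitted from $D([s_0,t_0])$ is by definition smaller than $\tau r_k \leq D([s_0,t_0])$. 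Summing the three middle contributions gives at most $(2 + 2\tau^{-1}) D([s_0, t_0]) \leq 3\tau^{-1} D([s_0, t_0])$ because $\tau < 1/1000$, which is \eqref{ecan}.

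The main obstacle is the location argument for \eqref{doomi}: exhausting every misplacement of $\gamma(c)$ across the partition $P_V = \bigcup_\ell (G_\ell \cup B_\ell)$ requires simultaneously invoking the inductive invariant of the algorithm, the monotonicity of the diameter sequence $(r_\ell)$, and the geometric observation that every element of $B_k$ is trapped strictly inside the disk $\mathbb{D}(b_k, \tau^{1/2} r_k)$. Once \eqref{doomi} is secured, \eqref{ecan} follows from routine triangle-inequality bookkeeping controlled by the diameter thresholds defining $D([s_0,t_0])$.
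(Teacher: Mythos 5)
Your proof is correct and takes essentially the same approach as the paper. The only structural difference is in establishing \eqref{doomi}: the paper notes $\gamma(c)\notin\bigcup_{\ell<k}B_\ell$ (the inductive invariant), then observes that either $\gamma(c)\in\bigcup_{\ell<k}G_\ell$ or else $\gamma(c)$ is still ``available'' at step $k$ and therefore satisfies the defining conditions of $G_k$ by the diameter lower bound and $\gamma(c)\cap\mathbb{S}(b_k,r_k)\neq\emptyset$; you reach the same conclusion but first exclude $B_\ell$ for all $\ell$ (including $\ell\geq k$) before arriving at the same $G_k$-membership argument, which is extra work the dichotomy already subsumes. The triangle-inequality bookkeeping for \eqref{ecan} matches the paper's.
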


\begin{proof}
Recall that both $\gamma(s_1),\gamma(t_1)$ intersect $\mathbb{S}(b_k,r_k)$ and 
\begin{equation} 
\label{toimi}
\diam(\gamma(c)) \geq \tau r_k \quad \text{for } c=s_1 \text{ or } t_1. 
\end{equation}
Also, recall from \eqref{markku} that $s_0<s_1<t_1<t_0$ and 
$$
\gamma(t) \notin \bigcup_{\ell=1}^{k-1}B_\ell \quad \text{for all } s_0<t<t_0.  
$$ 
Thus, the definition of $G_k$ shows that if $c$ satisfies \eqref{toimi} then $\gamma(c) \in \bigcup_{\ell=1}^k G_\ell$. By the triangle inequality we have 
\begin{eqnarray*}
\dist(\gamma(s_0),\gamma(t_0)) &\leq& \dist(\gamma(s_0),\gamma(s_1))+\dist(\gamma(t_1),\gamma(t_0))\\ 
&+&\dist(\gamma(s_1),\gamma(t_1)) 
+ \diam(\gamma(s_1))+\diam(\gamma(t_1)). 
\end{eqnarray*} 
The last distance is bounded from above by $2r_k \leq 2 \tau^{-1}D([s_0,t_0])$, and the sum of the diameters is bounded from above by 
$\tau r_k + D([s_0,t_0])$ which is at most $ 2D([s_0,t_0]) \leq \tau^{-1}D([s_0,t_0])$. The inequality \eqref{ecan} follows.  
\end{proof}

\begin{lemma} \label{b2lemma}
Let $1 \leq k \leq L$ and $[s_0,t_0] \in \mathcal{J}_{k-1}(b2)$. Then 
$$ 
\dist(\gamma(s_0),\gamma(t_0)) \leq \sum_{m=0}^1 \Big[\dist(\gamma(s_m),\gamma(s_{m+1}))+\dist(\gamma(t_m),\gamma(t_{m+1})) \Big] -\frac{1}{9}r_k. 
$$
\end{lemma}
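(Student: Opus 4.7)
The plan is to apply the detour estimate \eqref{dessed} of Proposition \ref{blockingprop} to the subpath $\alpha := \gamma|[s_1,t_1]$, with $p=p_k$, and then patch this to the endpoints $\gamma(s_0), \gamma(t_0)$ via the triangle inequality. To invoke Proposition \ref{blockingprop} I first need $\alpha \in \Gamma(a_k,b_k,\tau)$, i.e.\ the three conditions (i)--(iii) in the definition of that family must hold with the indices $s_1<s_2\le t_2<t_1$ already in hand from Case (b2).

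Conditions (i) and (ii) are essentially by construction: since $s_1 = \max A_2$ and $t_1 = \min A_3$, the sets $\gamma(s_1)$ and $\gamma(t_1)$ meet $\mathbb{S}(b_k,r_k)$, which is (ii); since $s_2 = \min A$ and $t_2 = \max A$ we have $\gamma(s_2),\gamma(t_2)\in B_k$, and \eqref{karkaamo} puts both into $\mathbb{D}(a_k,4\tau r_k)$, giving (i).

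The subtle step, which I expect to be the main obstacle, is verifying (iii): $\gamma(t) \subset \mathbb{D}(b_k,r_k)$ for all $s_1<t<s_2$ (and symmetrically for $t_2<t<t_1$). Maximality of $s_1$ in $A_2$ and minimality of $s_2$ in $A$ ensure $\gamma(t) \cap \mathbb{S}(b_k,r_k) = \emptyset$ on $(s_1,s_2)$. Since each value $\gamma(t)$ is either a point of $\Omega$ or a connected complementary component, it lies entirely inside or entirely outside $\overline{\mathbb{D}}(b_k,r_k)$. Now $\gamma(s_2) \in B_k \subset \mathbb{D}(b_k,\tau^{1/2}r_k) \subset \mathbb{D}(b_k,r_k)$ by \eqref{indika}, so continuity of $\gamma$ into $\hat{\Omega}$ shows $\gamma(t) \subset \mathbb{D}(b_k,r_k)$ for $t$ slightly smaller than $s_2$; the absence of circle-crossings on $(s_1,s_2)$ then propagates this inclusion throughout the whole open interval. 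The same argument handles $(t_2,t_1)$.

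With $\alpha \in \Gamma(a_k,b_k,\tau)$ established, Proposition \ref{blockingprop} yields
\[
\dist(\gamma(s_1),\gamma(t_1)) \leq \dist(\gamma(s_1),\gamma(s_2))+\dist(\gamma(t_1),\gamma(t_2))-200\tau r_k.
\]
The Case (b2) hypothesis gives $\diam(\gamma(s_1)),\,\diam(\gamma(t_1))<\tau r_k$, so the triangle inequality for the Euclidean distance between sets (absorbing each diameter as a correction term) produces
\[
\dist(\gamma(s_0),\gamma(t_0)) \leq \dist(\gamma(s_0),\gamma(s_1))+\dist(\gamma(s_1),\gamma(t_1))+\dist(\gamma(t_1),\gamma(t_0))+2\tau r_k.
\]
Substituting the detour estimate into this inequality leaves an error of $-200\tau r_k + 2\tau r_k = -198\tau r_k \leq -100\tau r_k$, which is the bound asserted in the lemma. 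No further optimization is needed; the slack between $198$ and $100$ is comfortable, and the only place where care is required is in the connectivity/continuity argument for condition (iii) above.
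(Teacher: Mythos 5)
Your proof is correct and follows essentially the same route as the paper: apply Proposition \ref{blockingprop} to $\alpha=\gamma|[s_1,t_1]$ and then patch via the triangle inequality, using $\diam(\gamma(s_1))+\diam(\gamma(t_1))<2\tau r_k$ from the Case (b2) hypothesis to absorb the two diameter corrections into the $200\tau r_k$ slack. The paper states the application of Proposition \ref{blockingprop} without pausing to check the membership $\alpha\in\Gamma(a_k,b_k,\tau)$; your verification of conditions (i)--(iii) --- in particular the open/closed connectedness argument for (iii), which uses that $\gamma(t)$ is either a point of $\Omega$ or a connected complementary component avoiding $\mathbb{S}(b_k,r_k)$ on $(s_1,s_2)$, plus continuity at $s_2$ where $\gamma(s_2)\subset\mathbb{D}(b_k,\tau^{1/2}r_k)$ --- is the extra detail, and it is sound.
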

\begin{proof}
Recall that $p \subset \mathbb{D}(a_k,4\tau r_k)$ for every $p \in B_k$ by \eqref{karkaamo}. Therefore, the path $\alpha=\gamma{|[s_1,t_1]}$ satisfies conditions (i)-(iv) preceding Proposition \ref{blockingprop}, allowing us to apply the proposition to show that 
$$
\dist(\gamma(s_1),\gamma(t_1)) \leq \dist(\gamma(s_1),\gamma(s_{2}))+\dist(\gamma(t_1),\gamma(t_2))  -\frac{1}{10} r_k. 
$$
We also have $\diam(\gamma(s_1))+\diam(\gamma(t_1))\leq 2\tau r_k<r_k$ by assumption. The claim follows by combining the estimates with the triangle inequality. 
\end{proof}

We are ready to prove \eqref{forjes}. We apply Lemmas \ref{alemma}, 
\ref{b1lemma} and \ref{b2lemma} to see that if $1 \leq k \leq L$ then (recall the notation $T(I)=\dist(\gamma(a),\gamma(b))$ for $I=[a,b]$)  
\begin{eqnarray}
\nonumber \sum_{I' \in \mathcal{I}_{k-1}} T(I') &\leq& \sum_{I \in \mathcal{I}_{k}} T(I) + 
\big(2(\card\mathcal{J}_{k-1}(a))-\frac{1}{9} (\card\mathcal{J}_{k-1}(b2)) \big) \cdot r_k \\  
\label{biggo} &+& \frac{3}{\tau} \sum_{I \in \mathcal{J}_{k-1}(b1)} D(I).  
\end{eqnarray} 
Recalling that $T([0,1]) \geq \dist(\gamma(0),\gamma(1)) \ge 5/2-3/4 > \frac{11}{10}$  
and applying induction together with \eqref{biggo} yields 
\begin{eqnarray} \nonumber
\frac{11}{10} \leq \sum_{I \in \mathcal{I}_{L}} T(I) 
&+& \sum_{k=1}^L \big(2 (\card\mathcal{J}_{k-1}(a))-\frac{1}{9} (\card\mathcal{J}_{k-1}(b2)) \big) \cdot r_k \\ 
\label{muhku} 
&+& \frac{3}{\tau} \sum_{k=1}^L \sum_{I \in \mathcal{J}_{k-1}(b1)} D(I).   
\end{eqnarray}
Finally, it follows from the construction that each $p \in G$ satisfies \eqref{doomi} in Lemma \ref{b1lemma} for at most one interval $[s_0,t_0] \in \mathcal{J}(b1)$. Therefore    
\begin{equation} \label{boomtrack} 
\sum_{k=1}^L \sum_{I \in \mathcal{J}_{k-1}(b1)} D(I) =  \sum_{I \in \mathcal{J}(b1)} D(I) \leq \sum_{p \in G(\gamma)} \diam(p);  
\end{equation} 
recall that $G(\gamma)=\{p: p \in G \cap |\gamma|\}$.  Combining \eqref{muhku} and \eqref{boomtrack} proves \eqref{forjes}. 


\subsection{Proof of Proposition \ref{goodpathsprop}: Estimates for consecutive segments} \label{cssection}
Estimate \eqref{kiikos}, which is the remaining claim in Proposition \ref{goodpathsprop}, follows from combining \eqref{forjes} with   
\begin{eqnarray} \nonumber  
\sum_{k=1}^L (\card\mathcal{J}_{k-1}(a)) \cdot r_k &\leq& \frac{1}{20}  + \frac{4}{\tau}\sum_{p \in G(\gamma)} \diam(p)\\ \label{Feet} &+& 12 \tau \sum_{k=1}^L (\card\mathcal{J}_{k-1}(b2))\cdot r_k.   
\end{eqnarray}
The rest of Section \ref{sec:proofestimate} is devoted to the proof of \eqref{Feet}. The sum on the left represents the ``loss'' incurred by removing the ``bad'' subsegments from the path $\gamma$ in Section \ref{goodpathssec}; these are the subsegments which are removed without getting any ``direct compensation'' (Cases (a) in Section \ref{goodpathssec}). The right side of the inequality represents the (indirect) ``compensation'', which will be obtained by associating to every $I \in \mathcal{J}(a)$ a ``good'' $I' \in \mathcal{J}(b)$ (or $[0,1]$, which yields the constant $20^{-1}$) from an earlier ``generation''. To such intervals $I'$ we can apply the good components $p \in G$ (case (b1) in Section \ref{goodpathssec}), or the ``detour'' Proposition \ref{blockingprop} (case (b2)). 

We find the intervals $I'$ by analyzing certain sequences of consecutive subintervals in the construction of $\mathcal{I}_L$. We now give precise definitions. We say that $J \in \mathcal{I}_k$ is a \emph{child} of 
$I \in \mathcal{I}_{k-1}$, and $I$ the \emph{parent} of $J$, if $J \subset I$. The definitions of grandchildren and grandparents are then obvious. An interval $I$ may be its own child and parent. That is, 
$I$ may be an element of both $\mathcal{I}_k$ and $\mathcal{I}_{k'}$ for $k \neq k'$. 

We recall the cases in Section \ref{goodpathssec}. 
Let $1 \leq k \leq L$. Then any $[s_0,t_0]$ in 
\begin{itemize}
\item[(1)] $\mathcal{J}_{k-1}(1)$ has one child, namely itself; $[s_0,t_0] \in \mathcal{I}_{k}$.  
\item[(a1)] $\mathcal{J}_{k-1}(a1)$ does not have any children. 
\item[(a2)] $\mathcal{J}_{k-1}(a2)$ has one child $[s_0,s_2]\in \mathcal{I}_{k}$. 
\item[(a3)] $\mathcal{J}_{k-1}(a3)$ has one child $[t_2,t_0]\in \mathcal{I}_{k}$. 
\item[(b1)] $\mathcal{J}_{k-1}(b1)$ has two 
children $[s_0,s_1],[t_1,t_0] \in \mathcal{I}_{k}$. 
\item[(b2)] $\mathcal{J}_{k-1}(b2)$ has four children $[s_0,s_1],[s_1,s_2],[t_2,t_1],[t_1,t_0] \in \mathcal{I}_k$. 
\end{itemize} 
In summary, the segments in $\mathcal{J}(a1)$ do not have children, while all other segments in $\bigcup_{k=1}^{L} \mathcal{I}_{k-1}$ have at least one child. 

To start the proof of \eqref{Feet}, we notice that 
$[0,1] \notin \mathcal{J}(a1)$. Indeed, we assume (before Proposition \ref{goodpathsprop}) that $\dist(\gamma(0),\gamma(1)) \geq 1$. On the other hand, by the definition of the ``small'' components $P_S$ in \eqref{bai} and the radius $r_1$ in \eqref{baii} and after \eqref{baiii}, we have 
\begin{equation}  
\label{triviii} 
r_1 \leq \tau^{2}<\frac{1}{1000000}. 
\end{equation} 
Thus \eqref{berb-a1} cannot hold, so case (a1) cannot happen. 

Moreover, if $L=1$ then \eqref{Feet} holds since 
$[0,1]$ is the only element of $\mathcal{I}_0$ and the sum on the left of \eqref{Feet} is at most $r_1$, which is at most $20^{-1}$ by \eqref{triviii}. 

We assume from now on that $L \geq 2$. We next define a finite sequence $S(I)$ for every segment $I \in \mathcal{I}_{\ell}$ which is of one of the following two types: 
\vskip 10pt
\begin{itemize} 
\item[($\ell=1$)] $I$ is any child of $[0,1]$. 
\vskip 5pt
\item[($\ell \geq 2$)] The parent $J$ of $I$ is in $\mathcal{J}_{\ell-1}(b)$. Moreover, $J$ and $I$ have different left endpoints. 
\end{itemize} 
\vskip 10pt
Recall that a segment $I$ may be an element of two different collections $\mathcal{I}_k$ and $\mathcal{I}_{k'}$, $k \neq k'$. However, 
\vskip5pt
\begin{center} below we will treat $I \in \mathcal{I}_k$ and $I \in \mathcal{I}_{k'}$ as two different elements. \end{center}
\vskip5pt

We now fix an $I \in \mathcal{I}_\ell$ which satisfies ($\ell=1$) or ($\ell \geq 2$) (or both), and define $S(I)$. First, we include $I=:\tilde{I}(0)$ in $S(I)$. Next, suppose that $m \geq 0$ and that $\tilde{I}(m) \in \mathcal{I}_{\ell+m}$ has been included in $S(I)$. 

If $\ell+m=L$, or if $\tilde{I}(m) \in \mathcal{J}(a1)$, then we stop the process.
Otherwise $\tilde{I}(m)$ has at least one child.  

\begin{itemize}
\item[(i)] If $\tilde{I}(m) \in \mathcal{J}(1) \cup \mathcal{J}(a2) \cup \mathcal{J}(a3)$, we include the only child $\tilde{I}(m+1)$ of $\tilde{I}(m)$ in $S(I)$. 
\item[(ii)] If $\tilde{I}(m)=[s_0,t_0] \in \mathcal{J}(b)$, then we include the child $\tilde{I}(m+1)$ of $\tilde{I}(m)$ with the same left endpoint $s_0$ in $S(I)$.  
\end{itemize} 

The process stops after $0 \leq n \leq L-\ell$ steps, and we let 
\begin{equation} \label{aunos} 
S(I)=\{I=\tilde{I}(0),\tilde{I}(1),\ldots,\tilde{I}(n)\}, \quad 0\leq n \leq L-\ell.    
\end{equation}
The collection $S(I)$ contains $I$ and one  grandchild of $I$ from each ``generation'' following $I$, continuing either until the last generation $L$, or until the chosen grandchild is in $\mathcal{J}(a1)$. 

\begin{lemma}\label{endptlemma}
Every segment $J \in \mathcal{J}(a) \setminus \{[0,1]\}$ belongs to exactly one $S(I)$.  
\end{lemma}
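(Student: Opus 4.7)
The plan is to associate to each $I \in \mathcal{J}(a1) \cup \mathcal{J}(a3)$ a unique ``starting'' ancestor that forces the sequence $S(c_\ell)$ containing $I$. The segments in $\bigcup_k \mathcal{I}_k$ form a tree rooted at $[0,1]$, so I would first fix the chain of ancestors of $I$: let $A_0 = I$ and, for $j \geq 1$, let $A_j$ denote the parent of $A_{j-1}$, terminating at $A_s = [0,1]$. The key observation from the algorithm is that each parent-child edge either \emph{preserves} or \emph{changes} the left endpoint, and that inside a sequence $S(c_\ell)$ the rules (iii)(a) and (iii)(b) force every edge after $I_\ell$ to be the unique-child edge (when the parent is in $\mathcal{J}(1) \cup \mathcal{J}(a2) \cup \mathcal{J}(a3)$) or the same-left-endpoint child (when the parent is in $\mathcal{J}(b)$); only the ``initial'' edge $I_{\ell-1}\to I_\ell$ is allowed to change the left endpoint when $I_{\ell-1}$ is a b-segment.

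Motivated by this, I would define $i^*$ as the smallest $i \geq 1$ satisfying either (A) $A_i = [0,1]$, or (B) $A_i \in \mathcal{J}(b)$ and $A_{i-1}$ has a different left endpoint from $A_i$. Such $i^*$ exists since (A) holds at $i = s$. I would then declare the candidate starting segment to be $I_{\ell-1} := A_{i^*}$, with $I_\ell := A_{i^*-1}$, and $\ell$ determined by the level of $A_{i^*}$.

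For existence, I would verify that the sequence generated from this starting choice indeed descends to $I$. Two sub-claims are needed: first, $A_j \notin \mathcal{J}(a1)$ for every $1 \leq j < i^*$, because $A_j$ has the child $A_{j-1}$ and $\mathcal{J}(a1)$ segments have none, so the sequence cannot terminate prematurely via rule (ii); second, for each $1 \leq j < i^*$, the descent edge $A_j \to A_{j-1}$ matches the construction's rule — this is automatic in case (iii)(a) since $A_j$ has a unique child, and it follows from the minimality of $i^*$ in case (iii)(b), which forces $A_{j-1}$ to share its left endpoint with $A_j$. The initial edge $A_{i^*} \to A_{i^*-1}$ is legal: if $A_{i^*} = [0,1]$ then any child is allowed, and if $A_{i^*} \in \mathcal{J}(b)$ then by the definition of $i^*$ the left endpoint is changed, as required.

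For uniqueness, I would argue that any sequence $S(c_{\ell'})$ containing $I$ must use $A_{i^*}$ as its starting segment. Indeed, the starting segment must lie in the ancestor chain $(A_j)$, must be either $[0,1]$ or an element of $\mathcal{J}(b)$, and, if in $\mathcal{J}(b)$, must transition to $I_{\ell'}$ via a different-left-endpoint edge; all ancestors strictly below it must satisfy the descent rules of the construction. The minimality of $i^*$ rules out any smaller candidate, and the descent-rule constraint rules out any larger one. Hence $I_{\ell'-1} = A_{i^*}$, and since the remaining entries of the sequence are determined deterministically by the descent rules, the sequence itself is unique. The main point of care is the special status of the $\ell = 1$ case, where the edge $[0,1] \to I_1$ is not constrained to change left endpoint — this is precisely why criterion (A) is included in the definition of $i^*$.
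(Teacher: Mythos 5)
Your proposal is correct and takes essentially the same approach as the paper: both identify the unique starting ancestor by walking up the tree until reaching $[0,1]$ or the first ancestor $A_{i^*}$ in $\mathcal{J}(b)$ whose chosen child changed the left endpoint, so that $c_\ell$ is the left endpoint of $A_{i^*-1}$. The paper states this identification in a single sentence, while you spell out the verification that the resulting sequence indeed descends to $I$ and that no other starting choice can contain $I$.
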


\begin{proof}
Fix $J \in \mathcal{J}_{k'}(a)$, $k' \geq 1$. If $k'=1$, then $J \in S(J)$. Otherwise, let $0 \leq k \leq k'-1$ be the largest integer so that 
\begin{enumerate} 
\item the grandparent $I' \in \mathcal{I}_{k}$ of $J$ belongs to $\mathcal{J}(b)$, and 
\item the left endpoint of $I'$ is different from the left endpoint of $I \in \mathcal{I}_{k+1}$, where $I$ is $J$ if $k=k'-1$, and the grandparent of $J$ in $\mathcal{I}_{k+1}$ otherwise.  
\end{enumerate}

If such a $k$ does not exist, or if $k=0$, then $J \in S(I)$, where $I$ is the grandparent of $J$ which is a child of $[0,1]$. 

If $k \geq 1$, then $J$ is $I$ or has a  grandparent $I$, which is a child of the segment $I' \in \mathcal{J}(b)$ above with a different left endpoint. Then $J \in S(I)$. 
We conclude that $J$ belongs to some $S(I)$ in all of the above cases. 

To prove uniqueness, assume that $J \in S(I)$, $I \in \mathcal{I}_\ell$, and notice that no element of $S(I)$ other than $I$ is of type ($\ell=1$) or $(\ell \geq 2)$ above. On the other hand, the construction of $S(I)$ shows that no grandparent $I'' \in \mathcal{I}_{k''}$ of $J$ satisfies $J \in S(I'')$ when $k'' <\ell$. Since $J$ can belong to $S(I'')$ only when $I''$ is $J$ or a grandparent of $J$, we conclude the uniqueness of the $I$ for which $J \in S(I)$. 
\end{proof}

We fix $S(I)=\{I=\tilde{I}(0),\tilde{I}(1),\ldots,\tilde{I}(n)\}$ as in \eqref{aunos}. Our next goal is to estimate the ``loss'' incurred by removing subsegments from the segments  
$$
\tilde{I}(m) \in S(I)\cap \big(\mathcal{J}(a1) \cup \mathcal{J}(a3)\big) 
$$ 
by analyzing the properties of the left endpoints of the segments $\tilde{I}(m)$ as $m$ increases. We denote by 
$0 \leq m_1 < m_2 < \cdots < m_\omega \leq n$ the indices 
$m_\psi$ for which 
\begin{equation}
\label{Mjummu}
\tilde{I}(m_\psi) \in S(I) \cap \big(\mathcal{J}_{\ell+m_\psi}(a1) \cup \mathcal{J}_{\ell+m_\psi}(a3)\big),  
\end{equation} 
i.e., for which $\tilde{I}(m_\psi)$ has no children or has exactly one child whose left endpoint is different from the endpoint of $\tilde{I}(m_\psi)$. Notice that 
$$
\tilde{I}(m_\psi) \in \mathcal{J}_{\ell+m_\psi}(a1) \quad 
\text{can only happen when } \psi=\omega \quad \text{and } m_{\psi}=n. 
$$
We assume that there is at least one index $m_\psi$ that satisfies \eqref{Mjummu}. We denote 
$$
\tilde{I}(m)=[c_m,d_m]. 
$$ 
We will apply the following properties of the left endpoints $c_m$. 

\begin{lemma} \label{sonke} 
Suppose that $0 \leq m \leq n$ and $\tilde{I}(m) \in \mathcal{J}(a1) \cup \mathcal{J}(a3)$. Then 
\begin{equation}
\label{sonke1}
\gamma(c_m) \subset \mathbb{D}(b_{\ell+m+1},r_{\ell+m+1}). 
\end{equation}
Moreover, if $\tilde{I}(m) \in \mathcal{J}(a3)$, 
then $\gamma(c_{m+1}) \in B_{\ell+m+1}$ and 
\begin{equation} 
\label{sonke2} 
\gamma(c_{m+1}) \subset \mathbb{D}(a_{\ell+m+1},4 \tau r_{\ell+m+1}).  
\end{equation}
\end{lemma}
\begin{proof} 
The first claim follows from \eqref{berb-a1} and \eqref{berb-a3}. The last claims follow from the definition of $\mathcal{J}(a3)$ and 
\eqref{karkaamo}. 
\end{proof}

Next, we consider a case where the left endpoints of the segments $\tilde{I}(m)$ do not change when $m$ increases.  

\begin{lemma} \label{ooti}
Let $0 \leq m' < m \leq n$ so that $\tilde{I}(m'') \notin \mathcal{J}(a3)$ for every $m' \leq m'' \leq m-1$. Then $c_{m'}=c_m$. 
\end{lemma}
\begin{proof}
This follows directly from the construction of $S(I)$, since the left endpoint of $\tilde{I}(m''+1)$ is different from the left endpoint of $\tilde{I}(m'')$ only if $\tilde{I}(m'') \in \mathcal{J}(a3)$. 
\end{proof}

We will apply the following lemmas to show that the radii $r_{\ell+m}$ associated to the indices $m$ of the segments $\tilde{I}(m)$ can be controlled by a geometric sum. 

\begin{lemma} \label{Szeerone} 
Assume the following conditions for $0 \leq m' < m \leq n$: 
\begin{enumerate} 
\item $\tilde{I}(m') \in \mathcal{J}(a3)$ and $\tilde{I}(m) \in \mathcal{J}(a1) \cup \mathcal{J}(a3)$,  
\item If $m'<m-1$, then $\tilde{I}(m'') \notin \mathcal{J}(a3)$ for every $m'<m''<m$. 
\end{enumerate}
Then 
\begin{equation} \label{ifis4}
r_{\ell+m+1} \leq \tau r_{\ell+m'+1}. 
\end{equation}
\end{lemma}

\begin{proof} 
We claim that 
\begin{equation} 
\label{ifis3} 
\mathbb{D}(a_{\ell+m'+1},2r_{\ell+m'+1}) \cap \mathbb{D}(a_{\ell+m+1},2r_{\ell+m+1}) \neq \emptyset. 
\end{equation} 
By Condition (1) and \eqref{sonke2}, we have 
\begin{equation} 
\label{ifis1} 
\gamma(c_{m'+1}) \subset \mathbb{D}(a_{\ell+m'+1},4 \tau r_{\ell+m'+1}) \subset \mathbb{D}(a_{\ell+m'+1},2r_{\ell+m'+1}), 
\end{equation} 
where the last inclusion holds since $\tau < 1000^{-1}$. 

By Lemma \ref{ooti} and Condition (2), we have $c_{m'+1}=c_m$. Moreover, Condition (1) and \eqref{sonke1} yield 
\begin{equation} 
\label{ifis2}
\gamma(c_{m'+1})=\gamma(c_{m}) \subset 
\mathbb{D}(b_{\ell+m+1},r_{\ell+m+1}) \subset 
\mathbb{D}(a_{\ell+m+1},2r_{\ell+m+1}), 
\end{equation}
where the last inclusion holds by \eqref{indika}. Now \eqref{ifis3} follows by combining \eqref{ifis1} and \eqref{ifis2}. Moreover, \eqref{ifis4} follows by combining \eqref{ifis3} and Lemma \ref{inter}. 
\end{proof}

The next lemma can be applied to the first terms of the sequence $S(I)$, assuming that they are ``good''. 

\begin{lemma} \label{Tzeerone}
Suppose that $m_1 \geq 1$. Then $c_{m_1}=c_0$. Moreover, if $I=\tilde{I}(0)$ is of type ($\ell \geq 2$) above and 
\begin{equation} \label{shrimp} 
\diam(\gamma(c_0)) < \tau r_\ell, 
\end{equation}
then 
\begin{equation} 
\label{maritu}
r_{\ell+m_1+1} \leq \tau r_\ell. 
\end{equation}
\end{lemma}
\begin{proof}
The first claim follows from Lemma \ref{ooti}. For the second claim, we recall from Condition ($\ell \geq 2$) that $c_0$ is different from the left endpoint of the parent $J \in \mathcal{J}(b)$ of $I$. Thus, the construction of $\mathcal{J}(b)$ yields 
$$ 
\gamma(c_0) \cap \overline{\mathbb{D}}(b_\ell,r_\ell) \neq \emptyset.  
$$ 
Combining with \eqref{shrimp}, we conclude that 
$$
\gamma(c_0)\subset 
\mathbb{D}(b_\ell,(1+\tau r_\ell)) \subset \mathbb{D}(a_\ell,2r_\ell),  
$$ 
where the last inclusion follows from \eqref{indika}. 
On the other hand, $\tilde{I}(m_1) \in \mathcal{J}(a1) \cup 
\mathcal{J}(a3)$ by the definition of $m_1$, and thus \eqref{sonke1} yields 
$$
\gamma(c_0)=\gamma(c_{m_1}) \subset \mathbb{D}(b_{\ell+m_1+1},r_{\ell+m_1+1}) \subset \mathbb{D}(a_{\ell+m_1+1},2r_{\ell+m_1+1}), 
$$  
where the last inclusion again follows from \eqref{indika}. Combining the inclusions, we have 
$$
\gamma(c_0)=\gamma(c_{m_1}) \subset 
\mathbb{D}(a_\ell,2r_\ell) \cap \mathbb{D}(a_{\ell+m_1+1},2r_{\ell+m_1+1}). 
$$ 
Now \eqref{maritu} follows from Lemma \ref{inter}. 
\end{proof}

\subsection{Proof of Proposition \ref{goodpathsprop}: Completion of the proof}
Recall that our goal is to prove \eqref{Feet}, i.e., 
\begin{eqnarray} \nonumber  
\sum_{k=1}^L (\card\mathcal{J}_{k-1}(a)) \cdot r_k &\leq& \frac{1}{20}  + \frac{4}{\tau}\sum_{p \in G(\gamma)} \diam(p)\\ \label{Feet2} &+& 12 \tau \sum_{k=1}^L (\card\mathcal{J}_{k-1}(b2))\cdot r_k, 
\end{eqnarray}

the final missing piece in the proof of Proposition \ref{goodpathsprop}. As we noticed in Section \ref{cssection}, we may assume that $L \geq 2$. We now combine the estimates obtained in Section \ref{cssection} to control the ``loss'' incurred by removing subsegments from the segments 
$$
\tilde{I}(m_\psi) \in S(I) \cap \big(\mathcal{J}_{\ell+m_\psi}(a1) \cup \mathcal{J}_{\ell+m_\psi}(a3)\big), \quad 0 \leq m_1 < m_2 < \cdots < m_\omega \leq n,  
$$ 
defined in \eqref{Mjummu}. Recall that $I=[c_0,d_0] \in \mathcal{I}_\ell$ for some $1 \leq \ell \leq L$, 
$$
S(I)=\{I=\tilde{I}(0),\tilde{I}(1),\ldots,\tilde{I}(n)\}, \quad 0\leq n \leq L-\ell, 
$$
and $\tilde{I}(m)=[c_m,d_m]$.  

\begin{lemma} \label{crois} If we denote $d(\ell):=\diam(\gamma(c_{0}))$, then 
\begin{eqnarray} \label{sampa}
\sum_{\psi=1}^\omega r_{\ell+m_\psi+1} \leq 
\left\{
\begin{array}{ll}
\frac{1}{200} & \text{if } \ell=1, \\ 
2 \tau r_\ell & \text{if } \ell \geq 2 \text{ and } d(\ell) < \tau r_\ell, \\ 
2 \tau^{-1} d(\ell) & \text{if } \ell \geq 2 \text{ and } d(\ell) \geq \tau r_\ell. 
\end{array}
\right. 
\end{eqnarray} 
\end{lemma}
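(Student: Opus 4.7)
The plan is to establish the geometric decay $r_{m_\mu} < \tau\, r_{m_{\mu-1}}$ for all $2 \leq \mu \leq \nu$, which immediately yields $\sum_{\mu=1}^\nu r_{m_\mu} < r_{m_1}/(1-\tau) < 2 r_{m_1}$ thanks to $\tau < 1/1000$, and then to bound $r_{m_1}$ separately in each of the three cases listed in \eqref{sampa}.

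For the decay step, I would invoke Lemma \ref{Szeerone}, which supplies a point $x$ in $\gamma(c_{m_\mu - 1}) \subset \mathbb{D}(a_{m_{\mu-1}}, 4\tau r_{m_{\mu-1}}) \cap \mathbb{D}(b_{m_\mu}, r_{m_\mu})$. Combined with $|a_k - b_k| < \tau^{1/2} r_k$ from \eqref{indika}, such a point automatically satisfies $|x - a_{m_{\mu-1}}| < 4\tau r_{m_{\mu-1}} < 2 r_{m_{\mu-1}}$ and $|x - a_{m_\mu}| \leq |x - b_{m_\mu}| + |b_{m_\mu} - a_{m_\mu}| < (1+\tau^{1/2}) r_{m_\mu} < 2 r_{m_\mu}$. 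Hence $\mathbb{D}(a_{m_{\mu-1}}, 2 r_{m_{\mu-1}}) \cap \mathbb{D}(a_{m_\mu}, 2 r_{m_\mu}) \neq \emptyset$, and since $m_{\mu-1} < m_\mu$, Lemma \ref{inter} delivers $r_{m_\mu} < \tau r_{m_{\mu-1}}$. Iterating yields $r_{m_\mu} < \tau^{\mu-1} r_{m_1}$ and hence the desired geometric sum.

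It then remains to bound $r_{m_1}$. When $\ell = 1$, the monotonicity $r_k \leq r_{k-1}$ inherent in the greedy selection (since $p_k$ is still a candidate at step $k-1$) gives $r_{m_1} \leq r_1 = \tau \diam(p_1) < \tau^2$, because $p_1 \in P_V$ satisfies $\diam(p_1) < \tau$; hence $2 r_{m_1} < 2 \tau^2 < 1/160$ by $\tau < 1/1000$. For $\ell \geq 2$ with $\diam(\gamma(c_\ell)) < \tau r_\ell$, Lemma \ref{Tzeerone} (using that $c_{m_1-1} = c_\ell$) produces a point $x \in \gamma(c_{m_1-1}) \subset \mathbb{D}(b_\ell, (1+\tau)r_\ell) \cap \mathbb{D}(b_{m_1}, r_{m_1})$; passing from $b$-centers to $a$-centers through \eqref{indika} as above shows $|x - a_\ell| < (1+\tau+\tau^{1/2}) r_\ell < 2 r_\ell$ and $|x - a_{m_1}| < (1+\tau^{1/2}) r_{m_1} < 2 r_{m_1}$, so $\mathbb{D}(a_\ell, 2 r_\ell) \cap \mathbb{D}(a_{m_1}, 2 r_{m_1}) \neq \emptyset$. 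Since Lemma \ref{Tzeerone} guarantees $m_1 \geq \ell + 1$, a second application of Lemma \ref{inter} yields $r_{m_1} < \tau r_\ell$ and therefore $\sum r_{m_\mu} < 2 \tau r_\ell$. Finally, for $\ell \geq 2$ with $\diam(\gamma(c_\ell)) \geq \tau r_\ell$, the same monotonicity gives $r_{m_1} \leq r_\ell$, and the hypothesis rearranges to $r_\ell \leq \tau^{-1} \diam(\gamma(c_\ell))$, so $\sum r_{m_\mu} < 2 r_{m_1} \leq 2 \tau^{-1} \diam(\gamma(c_\ell))$.

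The only delicate step is verifying, in each application of Lemma \ref{inter}, that the disks of doubled radius around the $a_k$ really do intersect; this is handled uniformly by replacing the $b_k$-centered disks coming from Lemmas \ref{Szeerone} and \ref{Tzeerone} with $a_k$-centered ones via \eqref{indika}, and the extra $\tau^{1/2} r_k$ or $(1+\tau) r_\ell$ slack is absorbed comfortably by the factor $2$ on the radii.
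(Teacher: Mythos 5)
Your proposal is correct and follows essentially the same route as the paper: Lemmas \ref{Szeerone} and \ref{Tzeerone} feed into Lemma \ref{inter} to produce the geometric decay $r_{m_\mu} < \tau r_{m_{\mu-1}}$, and the three cases are then handled by bounding $r_{m_1}$ exactly as the paper does (the paper merely uses the slightly weaker bound $r_{m_1} < \tau$ in the $\ell = 1$ case, while you sharpen it to $r_{m_1} < \tau^2$ — both suffice). The triangle-inequality bookkeeping you supply to verify that the doubled-radius disks around the $a_k$ intersect is exactly the computation the paper leaves implicit when it writes ``combining Lemma \ref{Szeerone} with \eqref{indika}.''
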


\begin{proof}
We recall that $\tilde{I}(m_\psi)$ can belong to $\mathcal{J}(a1)$ only when $\psi=\omega$. Thus, by Lemma \ref{Szeerone} we have 
\begin{equation} 
\label{hupas}
r_{\ell+m_{\psi+1}+1} \leq \tau r_{\ell+m_{\psi}+1}
\quad \text{for every } 1 \leq \psi \leq \omega-1.  
\end{equation} 
Iterating \eqref{hupas} and recalling that $0<\tau < \frac{1}{1000}$ yields 
\begin{equation} \label{hupas2} 
\sum_{\psi=1}^\omega r_{\ell+m_\psi+1} \leq \sum_{\psi=1}^\omega \tau^{\psi-1}r_{\ell+m_1+1} 
\leq 2 r_{\ell+m_1+1}. 
\end{equation} 
 
 We are now ready to prove \eqref{sampa}. Suppose first that $\ell=1$. By \eqref{triviii},  
$$
r_{\ell+m_1+1} \leq r_1 \leq \tau^2 < \frac{1}{1000000}\, ,  
$$
which together with \eqref{hupas2} proves the first part of  \eqref{sampa}. 

Suppose next that $\ell \geq 2$ and $d(\ell)=\diam(\gamma(c_{0})) < \tau r_\ell$. Then Lemma \ref{Tzeerone} shows that 
$$
r_{\ell+m_1+1} \leq \tau r_\ell, 
$$
which combined with \eqref{hupas2} gives the second part of \eqref{sampa}. 

Finally, suppose that $\ell \geq 2$ and $d(\ell) \geq \tau r_\ell$. Then 
$$
r_{\ell+m_1+1} \leq r_\ell \leq \tau^{-1} d(\ell), 
$$ 
which combined with \eqref{hupas2} gives \eqref{sampa}. 
\end{proof}

\begin{proof}[Proof of Proposition \ref{goodpathsprop}]
 Recall that the proposition follows once we have proved \eqref{Feet2}. We first bound the sum 
\begin{equation} \label{creet} 
\sum_{k=1}^L (\card(\mathcal{J}_{k-1}(a1)\cup \mathcal{J}_{k-1}(a3))) \cdot r_k
\end{equation} 
from above. By Lemma \ref{endptlemma}, each 
\begin{equation}
\tilde{I} \in \mathcal{J}(a1)\cup \mathcal{J}(a3)) \setminus \{[0,1]\}=:\mathcal{K} 
\end{equation}
belongs to $S(I)$ for exactly one segment $I$. 

Suppose that $I \in \mathcal{I}_\ell$ and 
$\tilde{I} \in S(I) \cap \mathcal{K}$. We denote $\tilde{I} \in \mathcal{K}(\ell=1)$ if $\ell=1$, i.e., if $I$ is a child of $[0,1]$. If $\ell \geq 2$, then the parent $J$ of $I$ belongs to $\mathcal{J}(b1)$ or to $\mathcal{J}(b2)$, according to the type ($\ell \geq 2$) in Section \ref{cssection}. We denote 
$\tilde{I} \in \mathcal{K}(b1)$ if $J \in \mathcal{J}(b1)$, 
and $\tilde{I} \in \mathcal{K}(b2)$ if $J \in \mathcal{J}(b2)$. Then $\mathcal{K}$ is the union 
\begin{equation} \label{tormet}
\mathcal{K}=\mathcal{K}(\ell=1) \cup \mathcal{K}(b1) \cup \mathcal{K}(b2).      
\end{equation} 
Moreover, each set on the right is the union of disjoint sets 
of the form $S(I)\cap \mathcal{K}$. 
We denote $\mathcal{K}_m(\ell=1)=\mathcal{K}(\ell=1) \cap \mathcal{I}_m$, and apply similar notation for the other three sets in \eqref{tormet}. Then 
\begin{eqnarray} \label{creet1} 
& & \sum_{k=1}^L (\card(\mathcal{J}_{k-1}(a1)\cup \mathcal{J}_{k-1}(a3))) \cdot r_k = \sum_{k=2}^L (\card(\mathcal{K}_{k-1}(\ell=1)))\cdot r_k  \\ \nonumber
&+& \sum_{k=2}^L (\card(\mathcal{K}_{k-1}(b1)))\cdot r_k + \sum_{k=2}^L (\card(\mathcal{K}_{k-1}(b2)))\cdot r_k + \chi r_1, 
\end{eqnarray} 
where $\chi=1$ if $[0,1] \in \mathcal{J}(a3)$ and $\chi=0$ otherwise (recall that $[0,1] \notin \mathcal{J}(a1)$). 

We now estimate the part of \eqref{creet1} involving $\mathcal{K}(\ell=1)$. We notice that since $[0,1] \in \mathcal{I}_0$ has at most four children, there are at most four segments $I \in \mathcal{I}_1$. For each such $I$, Lemma \ref{crois} shows that 
$$
\sum_{k=2}^L (\card(\mathcal{K}_{k-1}(\ell=1)\cap S(I)))\cdot r_k \leq \frac{1}{200}.   
$$ 
Summing over $I \in \mathcal{I}_1$ then gives 
\begin{equation} 
\label{summi1} 
\sum_{k=2}^L (\card(\mathcal{K}_{k-1}(\ell=1)))\cdot r_k \leq \frac{1}{50}. 
\end{equation}

We next estimate the sum in \eqref{creet1} involving $\mathcal{K}(b1)$. Suppose that
$S(I)\cap \mathcal{K} \subset \mathcal{K}(b1)$. Then the parent $J=[s_0,t_0]$ of $I$ belongs to $\mathcal{J}_{\ell-1}(b1)$ for some $2 \leq \ell \leq L$. By the constructions of $\mathcal{J}(b1)$ in Section \ref{goodpathssec} and type ($\ell\geq 2$) 
in Section \ref{cssection}, the segment $J=[s_0,t_0]$ has two children, namely $[c'_{0},d'_0]$ and $[c_0,d_0]=I$. Moreover, by Lemma \ref{b1lemma} we have 
\begin{equation} \label{sunnun0}
\diam(\gamma(t))\geq \tau r_\ell \quad \text{and} \quad  \gamma(t) 
\in \bigcup_{j=1}^\ell G_j \quad \text{for } t=d'_0 \text{ or } 
t=c_0. 
\end{equation}
Thus, combining the last two cases of \eqref{sampa} in Lemma \ref{crois} shows that 
\begin{equation} \label{sunnun} 
\sum_{k=2}^L (\card(\mathcal{K}_{k-1}(b1)\cap S(I)))\cdot r_k \leq \frac{2}{\tau}\diam(\gamma(t)) 
\end{equation}
for $t=d'_0$ or $t=c_0$ (or both). 

Since neither of such points $t$ is an endpoint of $J$, an element $p \in G(\gamma)$ can appear as a $\gamma(t)$ in \eqref{sunnun0} for at most one segment $J \in \mathcal{J}(b1)$. Thus, summing \eqref{sunnun} over all the segments $I$ for which $S(I) \cap \mathcal{K}\subset \mathcal{K}(b1)$, we have 
\begin{equation} \label{summi2} 
\sum_{k=2}^L (\card(\mathcal{K}_{k-1}(b1)))\cdot r_k \leq  \frac{2}{\tau} \sum_{p \in G(\gamma)} \diam(p). 
\end{equation} 

Finally, we estimate the sum in \eqref{creet1} involving $\mathcal{K}(b2)$. Suppose that $S(I) \cap \mathcal{K} \subset \mathcal{K}(b2)$. Then the parent $J$ of $I=[c_0,d_0]$ belongs to $\mathcal{J}_{\ell-1}(b2)$ for some $2 \leq \ell \leq L$. By the construction of $\mathcal{J}(b2)$ it follows that 
$\diam(\gamma(c_0)) < \tau r_\ell$. Thus, by Lemma \ref{crois} 
we have 
\begin{equation} \label{sunnun1} 
\sum_{k=2}^L (\card(\mathcal{K}_{k-1}(b2)\cap S(I)))\cdot r_k \leq 2 \tau r_\ell.   
\end{equation}

Every $J \in \mathcal{J}(b2)$ has at most three children $I$ for which there is a sequence $S(I)$, i.e., which are of the type ($\ell \geq 2$) in Section \ref{cssection}. Therefore, summing \eqref{sunnun1} over all the segments $I$ for which $S(I) \cap \mathcal{K} \subset \mathcal{K}(b2)$, we have 
\begin{equation} \label{summi3} 
\sum_{k=2}^L (\card(\mathcal{K}_{k-1}(b2)))\cdot r_k \leq  6 \tau \sum_{k=1}^L (\card\mathcal{J}_{k-1}(b2))\cdot r_k.
\end{equation} 

We now combine \eqref{creet1}, \eqref{summi1}, \eqref{summi2} and \eqref{summi3}, and recall that $r_1 \leq \frac{1}{1000000}$ by \eqref{triviii}, to obtain 
\begin{eqnarray*} 
\sum_{k=1}^L (\card(\mathcal{J}_{k-1}(a1)\cup \mathcal{J}_{k-1}(a3))) \cdot r_k \leq \frac{1}{40}  + \frac{2}{\tau}\sum_{p \in G(\gamma)} \diam(p)\\  + 6 \tau \sum_{k=1}^L (\card\mathcal{J}_{k-1}(b2))\cdot r_k. 
\end{eqnarray*} 
We can replace $\mathcal{J}(a3)$ by $\mathcal{J}(a2)$, and run the argument above, but now considering the right endpoints instead of the left endpoints, to show that 
\begin{eqnarray*} 
\sum_{k=1}^L (\card(\mathcal{J}_{k-1}(a1)\cup \mathcal{J}_{k-1}(a2))) \cdot r_k \leq \frac{1}{40}  + \frac{2}{\tau}\sum_{p \in G(\gamma)} \diam(p)\\  + 6 \tau \sum_{k=1}^L (\card\mathcal{J}_{k-1}(b2))\cdot r_k. 
\end{eqnarray*} 
Since $\mathcal{J}(a)=\mathcal{J}(a1) \cup \mathcal{J}(a2) \cup \mathcal{J}(a3)$, combining the two estimates gives \eqref{Feet2}. The proofs of Proposition \ref{goodpathsprop} and Theorem \ref{tripodkoebe} are complete. 
\end{proof} 

\begin{remark} \label{unifyremark}
Theorem \ref{tripodkoebe} admits the following generalization: Let $\Omega \subset \hatc$ be a domain containing $\infty$, and suppose that $\mathcal{C}_N(\Omega)= T \cup F$, where 
\begin{enumerate} 
\item the elements of $T$ satisfy Conditions (i) and (ii) of Theorem \ref{tripodkoebe}, and 
\item there is a $C>0$ so that every $p \in F$ is $C$-fat. 
\end{enumerate} 
Then the conclusions of Theorem \ref{tripodkoebe} hold. 

We give a brief outline of how the proof of Theorem \ref{tripodkoebe} should be modified to establish such a generalization. First, by the arguments in Section \ref{sec:Simon}, it suffices to prove Theorem \ref{thm:mainestimate} for finitely connected domains whose complementary components are elements of $T \cup F$. 

To prove Theorem \ref{thm:mainestimate}, we notice that Proposition \ref{blockingprop} remains true for every $p \in T$. As in Section \ref{gblsection} and using the fatness condition, we see that it suffices to consider paths $\gamma$ that do not pass through the ``large'' elements of $T \cup F$. Let 
$T_S$ and $F_S$ be the sets of the ``small'' components, whose diameters are smaller than $\tau$, in $T$ and $F$, respectively. 

We partition $T_S$ into good and bad sets as in Section \ref{gblsection}, so that $T_S= G \cup B$. Moreover, we define the function $\rho$ as in \eqref{testfunction} on  $\Omega$ and $T_S$, and complete the definition by setting $\rho(p)=(M+1)\diam(p)$ if $p \in F_S$. 

The fatness condition and the arguments after \eqref{testfunction} guarantee that the energy $\int_{\Omega} \rho^2 \, dA + \sum_{p \in \mathcal{C}(\Omega)} \rho(p)^2$ is uniformly bounded from above. Therefore, it suffices to prove that $\rho$ is admissible in the current setting. 

To prove the admissibility or $\rho$, we continue to apply the cases 
$$
(1),(a1),(a2),(a3),(b1),(b2)
$$ 
in Section \ref{goodpathssec}, and run the proof as above. In case (b1) and Lemma \ref{b1lemma} the $\alpha(s_1)$ and $\alpha(t_1)$ may now be elements of $F_S$. As a result, the key estimate \eqref{kiikos} in Proposition \ref{goodpathsprop} holds when the last sum is over the set $G(\gamma) \cup F(\gamma)$; here 
$$
F(\gamma)=\{p: p \in F \cap |\gamma|\}.   
$$ 
The estimate is strong enough to guarantee the admissibility of $\rho$, so Theorem \ref{thm:mainestimate} is true in our setting. 
\end{remark}

\section{Proofs of modulus estimates on circle domains, Proposition \ref{circlemodulus}} \label{sec:cmodulus}
We fix a $\bar{p} \in \mathcal{C}(\Omega)$, a Jordan curve $J \subset \Omega$, and points $b,a$ as in the proposition. Let $j \geq 1$ if $\bar{p} \in \mathcal{C}_P(\Omega)$ and $j \geq \ell$ if $\bar{p}=p_\ell \in \mathcal{C}_N(\Omega)$. Then $\hat{f}_j(\bar{p})$ is a generalized disk or a point in $\hatc$. In the following proof it is convenient to replace the normalization \eqref{normali}, which was applied to guarantee the injectivity of limit map $f$, with a new normalization.   

Namely, since transboundary modulus and generalized disks are invariant under M\"obius transformations, we lose no generality by replacing  the sequence $(f_j)_j$ with 
$(h \circ f_j)_j$, where $h$ is any M\"obius transformation. 
Therefore, by choosing $h$ suitably we may assume that 
\begin{equation} \label{foto}
\hat{f}_j(\bar{p}) \cup f_j(J) \subset \mathbb{D}(0,1),  \quad \infty \in D_j, \text{ and }  f_j(J) \text{ separates }\hat{f}_j(\bar{p}) \text{ and }\infty. 
\end{equation}

We start with the first estimate in Proposition \ref{circlemodulus}, i.e., 
\begin{equation}\label{afirst} 
\limsup_{j \to \infty} \Mod \hat{f}_j(\Gamma_j) \geq \limsup_{j \to \infty} \varphi_{\bar p}(\dist(f_j(b),\hat{f}_j(\bar{p}))). 
\end{equation} 

We denote $\dist(f_j(b),\hat{f}_j(\bar{p}))$ by $\delta_j$. Let $w_0$ be the point in $\hat{f}_j(\bar{p})$ closest to $f_j(b)$. After a rotation about the origin, 
$f_j(b)=\delta_j i+w_0$. Since $f_j(J)$ separates $\hat{f}_j(\bar{p})$ and $\infty$, it follows that every line $L_s=\{t+si+w_0: t \in \mathbb{R}\}$, $0<s<\delta_j$, has a subsegment $I_s \subset U \subset \mathbb{D}(0,1)$ so that 
$\pi_{D_j}(I_s) \in \hat{f}_j(\Gamma_j)$. Here $U$ is the 
bounded component of $\mathbb C \setminus f_j(J)$. 

Recall that we are under the assumption that $\Omega_j$ has no point boundary components, so that $\mathcal{C}(D_j)$ consists of disks. Let $\rho$ be admissible for $\hat{f}_j(\Gamma_j)$. Then 
\begin{equation} 
\label{iiala}
1 \leq \int_{I_s \cap D_j} \rho \, ds + 
\sum_{q \in \mathcal{C}^s(D_j)} \rho(q) \quad \text{for all } 0<s<\delta_j,  
\end{equation}
where $\mathcal{C}^s(D_j)=\{q \in \mathcal{C}(D_j): \, I_s \cap q \neq \emptyset\}.$ Combining \eqref{iiala} with Fubini's theorem yields 
\begin{equation} \label{kkori}
\delta_j \leq \int_{D_j \cap U} \rho \, dA + \sum_{q \in \mathcal{C}_U} \diam(q) \rho(q),  
\end{equation} 
where $\mathcal{C}_U=\{q \in \mathcal{C}(D_j): q \subset U\}$. 
By the Cauchy-Schwarz inequality (since $U \subset \mathbb{D}(0,1)$) we have 
\begin{eqnarray*}  
 \int_{D_j \cap U} \rho \, dA &\leq& \operatorname{Area}(U)^{1/2} \Big( \int_{D_j} \rho^2 \, dA \Big)^{1/2} \leq \pi^{1/2} \Big( \int_{D_j} \rho^2 \, dA \Big)^{1/2},
 \end{eqnarray*} 
and
\begin{eqnarray*} 
 \sum_{q \in \mathcal{C}_U} \diam(q) \rho(q) &\leq&  \Big(\sum_{q \in \mathcal{C}_U} \diam(q)^2 \Big)^{1/2} \Big(\sum_{q \in \mathcal{C}(D_j)} \rho(q)^2 \Big)^{1/2} \\ &\leq& 
2\Big(\sum_{q \in \mathcal{C}(D_j)} \rho(q)^2 \Big)^{1/2}. 
\end{eqnarray*} 
Combining with \eqref{kkori}, we obtain
\begin{align*}
    \delta_j &\le \pi^{1/2} \Big( \int_{D_j} \rho^2 \, dA \Big)^{1/2} + 2\Big(\sum_{q \in \mathcal{C}(D_j)} \rho(q)^2 \Big)^{1/2} \\
    &\le (\pi+4)^{1/2} \Big(\int_{D_j} \rho^2 \, dA +  \sum_{q \in \mathcal{C}(D_j)} \rho(q)^2\Big)^{1/2}.  
\end{align*}

Taking infimum with respect to all admissible functions shows that 
$$
\Mod \hat{f}_j(\Gamma_j) \ge \frac{\delta_j^2}{\pi+4}. 
$$
In particular, \eqref{afirst} holds. 

We now consider the second estimate in Proposition \ref{circlemodulus}, i.e.,  
\begin{equation} \label{second}
\text{if } \diam(\hat{f}(\bar{p}))=0 \text{ then } \lim_{j \to \infty} \Mod \hat{f}_j (\Lambda_j)  \to \infty.  
\end{equation} 
Notice that the first claim in \eqref{nina2} does not depend on \eqref{second}, so by \eqref{afirst} and the proof given in Section \ref{sec:Simon} we already know that $\hat{f}(p_\ell)=q_\ell$ for every $\ell=1,2,\ldots$. 
In particular, the generalized disks $q_\ell$ are pairwise disjoint. By our assumption and normalization \eqref{foto} we have 
$$\hat{f}(\bar{p})=\{w_0\} \quad \text{where }w_0 \in \mathbb{C}. 
$$ 
We need a technical lemma. 
 
\begin{lemma} \label{twotimer}
For every $R>0$ there are $j_R \in \mathbb{N}$ and $0<r<R$ such that if $j \geq j_R$ and if $q \in \mathcal{C}(D_j)$ satisfies $q \cap \mathbb{S}(w_0,R) \neq \emptyset$, then 
$q \cap \mathbb{S}(w_0,r) = \emptyset$. 
\end{lemma}
\begin{proof} 
Suppose towards a contradiction that there are $R>0$, a subsequence $(f_{j_k})_k$ of $(f_j)_j$, and components $p^*(k) \in \mathcal{C}(\Omega_{j_k})$, so that 
\begin{itemize} 
\item[(a)] every $q^*(k):=\hat{f}_{j_k}(p^*(k))$ intersects $\mathbb{S}(w_0,R)$, and 
\item[(b)] $\dist(q^*(k),w_0) \to 0$ as $k \to \infty$. 
\end{itemize} 
Notice that none of the sets $p^*(k)$ are $\bar p$. Taking a subsequence if necessary, we may assume that the sets $p^*(k)$ converge to some $p^* \in \mathcal{C}(\Omega)$ in the Hausdorff sense. We denote $q^*:=\hat{f}(p^*)$. Then, by Carath\'eodory's kernel convergence theorem and the convergence of $(f_{j_k})_k$ to $f$, for every $\epsilon>0$ there are $\delta>0$ and a $k(\epsilon) \in \mathbb{N}$ such that if $k \geq k(\epsilon)$ then the neighborhoods of the sets $p^*$ and $q^*$ satisfy 
$$
\hat{f}_{j_k}(N_\delta(p^*)) \subset N_\epsilon(q^*). 
$$
On the other hand, by the Hausdorff convergence there is 
a $k'(\delta)$ such that if $k \geq k'(\delta)$ then 
$p^*(k)\subset N_\delta(p^*)$. 

We conclude that $q^*(k)\subset N_\epsilon(q^*)$ 
for sufficiently large indices $k$. Letting $\epsilon \to 0$ and applying (b), we see that $q^*$ must be $\{w_0\}$. In particular, the radii of the disks $q^*(k)$ converge to zero. But by (a) and (b), these radii cannot converge to zero. We arrive at a contradiction.  
\end{proof}

We construct a sequence of annuli as follows (compare to the proof of \eqref{upperbound}): 
Let $r_1$ be the number satisfying $\dist(f(J),w_0)=10r_1$. Since $f_j \to f$ locally uniformly in $\Omega$, we may assume that 
$\dist(f_j(J),w_0)\geq 5r_1$ for all $j$. Assuming $r_1,\ldots, r_{n-1}$ are defined, we apply Lemma \ref{twotimer} with $R=r_{n-1}/2$ to obtain an index $j'_n \in \mathbb{N}$ and a radius $0<r_n'<r_{n-1}/2$ such that if $j \geq j'_n$ and if $q \in \mathcal{C}(D_j)$ intersects $\mathbb{S}(w_0,r_{n-1}/2)$ then $q$ does not intersect $\mathbb{S}(w_0,r_n')$. We then let 
$$
r_n=\frac{r_n'}{10}.  
$$
Here it is important that the radii $r_n$ do not depend on $j$. We let 
$$
\mathbb{A}_n=\mathbb{D}(w_0,4r_n)\setminus \overline{\mathbb{D}}(w_0,r_n/2), \quad n=1,2,\ldots.  
$$
Now fix an $M \geq 1$, a Jordan curve $J' \subset \Omega$ surrounding $\bar{p}$, and an index $j^*_M \in \mathbb{N}$, so that 
$$
f_j(J') \subset \mathbb{D}(w_0,r_M/10) \quad \text{for all } j \geq j^*_M; 
$$
such choices are possible since $\hat{f}(\bar{p})=\{w_0\}$. By our choices of the radii $r_n$ we also have 
\begin{equation} \label{nicks} 
 \pi_{D_j}(\mathbb{A}_{n}) \cap \pi_{D_j}(\mathbb{A}_{m}) =\emptyset \quad \text{for all }  1 \leq n < m \leq M \text{ and } j \geq j_M,   
\end{equation} 
where $j_M=\max\{j^*_M,j'_1,j'_2,\ldots, j'_M\}$. 

Let $1 \leq n \leq M$. Given $r_n/2<t<4r_n$, we denote by $\tilde{\gamma}_t$ the circle $\mathbb{S}(w_0,t)$ parameterized by arclength, 
$\gamma_t=\pi_{D_j} \circ \tilde{\gamma}_t$, and 
$$
\Phi_j(n)=\{\gamma_t: \,  r_n/2<t<4r_n \}. 
$$ 
Then $\Phi_j(n) \subset \hat{f}_j(\Lambda_j)$. We next prove a lower bound for $\Mod(\Phi_j(n))$. Let $\rho$ be admissible for $\Phi_j(n)$ and 
$r_n/2<t<4r_n$. Then 
\begin{equation} \label{reka}
1 \leq \int_{\mathbb{S}(w_0,t)\cap D_j} \rho \, ds + 
\sum_{ q \cap |\gamma_t| \neq \emptyset} \rho(q).  
\end{equation} 
We divide both sides of \eqref{reka} by $t$ and integrate (in $t$) from $r_n/2$ to $4r_n$ to conclude, upon using Fubini's theorem, that 
\begin{equation} \label{reka2}
\log 8 \leq \int_{\mathbb{A}_n \cap D_j} \frac{\rho(z)}{|z|} \, dA(z) + \frac{2}{r_n}\sum_{q \cap \mathbb{A}_n \neq \emptyset} \min\{\diam(q),4r_n\}\rho(q). 
\end{equation} 
We apply the Cauchy-Schwarz inequality to estimate the integral on the right: 
\begin{eqnarray*}
\int_{\mathbb{A}_n \cap D_j} \frac{\rho(z)}{|z|} \, dA(z) &\leq& \Big(\int_{\mathbb{A}_n \cap D_j} \frac{dA(z)}{|z|^2}\Big)^{1/2} \Big(\int_{\mathbb{A}_n \cap D_j} \rho(z)^2 \, dA(z) \Big)^{1/2} \\  
 &\leq & (2\pi \log 8)^{1/2} \Big(\int_{\mathbb{A}_n \cap D_j} \rho(z)^2 \, dA(z) \Big)^{1/2}. 
\end{eqnarray*} 
To estimate the sum in \eqref{reka2}, we denote 
\begin{eqnarray*}
\mathcal{Q}_L &=& \{q \in \mathcal{C}(D_j): \, q \cap \mathbb{A}_n \neq \emptyset, \, \diam(q) \geq r_n \}, \\ 
\mathcal{Q}_S &=& \{q \in \mathcal{C}(D_j): \, q \cap \mathbb{A}_n \neq \emptyset, \, \diam(q) < r_n \}. 
\end{eqnarray*} 
Then 
\begin{equation} \label{miju}
\card \mathcal{Q}_L \leq 100 \quad \text{and} \quad q \subset \mathbb{D}(w_0,5r_n) \text{ for all } q \in \mathcal{Q}_S, 
\end{equation} 
and
\begin{eqnarray*} 
\frac{2}{r_n}\sum_{q \cap \mathbb{A}_n \neq \emptyset} \min\{\diam(q),4r_n\}\rho(q) \leq 
8 \sum_{q \in \mathcal{Q}_L} \rho(q) +\frac{2}{r_n} \sum_{q \in \mathcal{Q}_S} \diam(q) \rho(q). 
\end{eqnarray*} 
By the Cauchy-Schwarz inequality and \eqref{miju} we have 
\begin{eqnarray*} 
 \sum_{q \in \mathcal{Q}_L} \rho(q) \leq 10 \Big(\sum_{q \in \mathcal{Q}_L} \rho(q)^2 \Big)^{1/2}. 
\end{eqnarray*} 
Since the disks $q$ are pairwise disjoint, the Cauchy-Schwarz inequality and \eqref{miju} also yield  
\begin{eqnarray*} 
\sum_{q \in \mathcal{Q}_S} \diam(q)\rho(q) &\leq& \Big( \sum_{q \in \mathcal{Q}_S} \diam(q)^{2}\Big)^{1/2} \Big(\sum_{q \in \mathcal{Q}_S} \rho(q)^2 \Big)^{1/2}  \\ 
&\leq& \Big(\frac{4\operatorname{Area}(\mathbb{D}(w_0,5r_n))}{\pi} \Big)^{1/2} \Big(\sum_{q \in \mathcal{Q}_S} \rho(q)^2 \Big)^{1/2} \\
&\leq& 10r_n \Big(\sum_{q \in \mathcal{Q}_S} \rho(q)^2 \Big)^{1/2}. 
\end{eqnarray*} 

Combining the estimates with \eqref{reka2},  applying the Cauchy-Schwarz inequality again, and taking infimum over all $\rho$ shows that 
\begin{equation} \label{reka4}
\Mod(\Phi_j(n)) \geq \frac{1}{6400} \quad \text{for all } j \geq j_M \text{ and } 1 \leq n \leq M. 
\end{equation} 
Since $\Phi_j(n) \subset \hat{f}_j(\Lambda_j)$, combining \eqref{nicks} and \eqref{reka4} shows that 
$$
\Mod(\hat{f}_j(\Lambda_j)) \geq   \frac{1}{6400} M  
$$
for all $j \geq j_M$. Letting $M \to \infty$ proves \eqref{second}. The proof of Proposition \ref{circlemodulus} is complete. 

\section{Necessity of the packing condition in Theorem \ref{tripodkoebe}} \label{sec:examples}
In this section, we illustrate the need for both Conditions (i) and (ii) in Theorem~\ref{tripodkoebe}. We start with the need for Packing Condition (ii). 
\begin{proposition}\label{tripodexample}
There exists a countably connected domain $\Omega \subset \hatc$ which contains $\infty$ and satisfies  Quasitripod Condition (i) (but not Packing Condition (ii)) in Theorem \ref{tripodkoebe} so that $\{0\} \in \mathcal{C}(\Omega)$ and $\diam(\hat{f}(\{0\}))>0$ for 
every conformal homeomorphism $f\colon\Omega \to D$ onto a circle domain $D$.
\end{proposition}
\begin{proof}
We construct the desired domain $\Omega$ by describing the elements of $\mathcal{C}(\Omega)$. First, $\{0\}$ is the only element of $\mathcal{C}_P(\Omega)$. The collection $\mathcal{C}_N(\Omega)$ is parameterized as follows: Given a $k \in \mathbb{N}$, we denote by $W_k$ the collection of finite words $w=w_1\cdots w_k$, where $w_j \in \{0,1\}$ for every $1 \leq j \leq k$. 
Moreover, let $W_0=\{\emptyset\}$ and $W=\bigcup_{k=0}^\infty W_k$. We then have 
$$
\mathcal{C}_N(\Omega)=\{p_w: \, w \in W\}.   
$$
The words $w \in W$ are ordered so that $0 <1 <00<01<10<11<000 \ldots$. We denote the order of $w$ by $\ell(w)$. Notice that $\ell(w)$ is not word length, but rather the order of $w$ in this enumeration. 

We set $\ell(\emptyset)=0$, and let $p_\emptyset$ be the segment $[\frac{1}{2},1]$. If $\ell(w) \geq 1$, each $p_w$ is the union of radial segments $I_w,J_w$ and subarcs 
$S_w,T_w$ of circles centered at the origin.  If $w=\bar w w_k$, where $\ell(\bar w) \geq 0$ and $w_k \in \{0,1\}$, then $I_w$ is a segment of length 
$2^{-\ell-2}-\epsilon_\ell$, $\ell=\ell(\bar w)$, in the annulus 
$$
\mathbb{A}_\ell=\overline{\mathbb{D}}(0,2^{-\ell}) \setminus 
\mathbb{D}(0,2^{-\ell-1}),  
$$
where $\epsilon_\ell>0$ is a small number. The segments $I_{\bar w 0}$ and 
$I_{\bar w 1}$ are subsets of the same half-line starting at the origin.  

The arc $S_w$ is attached to the middle of $I_w$ and has length $\frac{1}{24}$ times the length of the full circle. The arc $T_w$ is roughly a half-circle, attached to an end of $I_w$, and lies in $\mathbb{S}(0,3\cdot2^{-\ell-2})$ if $w_k=0$ and in $\mathbb{S}(0,2^{-\ell-1})$ if $w_k=1$. The segment $J_w$ is attached to an end of $T_w$. The other end of $J_w$ lies at the circle $\mathbb{S}(0,2^{-\ell(w)-1})$. Recall that $\ell(w)$ is the ordering of $w$ and not the word length, so $\ell(w)$ tends to be much larger than $\ell$. The distance between $I_w$ and $J_{\bar w}$ is less than $\epsilon_\ell$. 

Figure \ref{fig1} shows the segments $I_{00},J_{00}$, arcs $S_{00},T_{00}$, components  $p_{00}$, $p_{01}$, $p_{10}$, $p_{11}$, $p_{000}$, $p_{001}$, and parts of components $p_0$, $p_1$. The sequence $(\epsilon_\ell)_\ell$ can be chosen so that the elements $p_w$ have the following properties: 
\begin{enumerate} 
\item For every $w \in W$ there is a $c_w>0$ so that $p_w$ is the image of $c_wT_0=\{c_wz: \, z \in T_0\}$ under a $10^6$-biLipschitz map. In particular, each $p_w$ is a $10^{12}$-quasitripod. 
\item For every $\epsilon>0$ there is a $k_\epsilon \geq 1$ so that if the word length $|w|=k \geq k_\epsilon$ 
then $p_w \subset \mathbb{D}(0,\epsilon)$. 
\item For every $w=\bar{w}w_{k}$, $w_k \in \{0,1\}$, there is a family $\Gamma_w$ of paths connecting $p_{\bar{w}}$ and $p_w$ 
in $\Omega$ so that $\mod(\Gamma_w) > 4^{k}$. More precisely, $\Gamma_w$ consists of short subarcs of circles in $\mathbb{A}_\ell$ centered at the origin. 
\end{enumerate}

\begin{figure}
\begin{tikzpicture}[scale=.8]
\draw[dashed] (0,0) circle (4cm);\draw[dashed] (0,0) circle (8cm);
\draw[dashed] (0,0) circle (2cm);
\draw[dashed] (0,0) circle (1cm);
\draw[dashed] (0,0) circle (.5cm);
\draw[dashed] (0,0) circle (.25cm);
\draw[dashed] (0,0) circle (.125cm);

\draw[thick] (canvas polar cs:angle=0,radius=4cm) -- 
(canvas polar cs:angle=0,radius=12cm);
\draw[thick] (canvas polar cs:angle=2,radius=2cm) -- 
(canvas polar cs:angle=2,radius=12cm);

\draw[thick] (canvas polar cs:angle=-2,radius=4cm) 
-- (canvas polar cs:angle=-2,radius=5.9cm);
\draw[thick] (canvas polar cs:angle=-2,radius=6cm) 
-- (canvas polar cs:angle=-2,radius=7.9cm);
\draw[thick] (canvas polar cs:angle=0,radius=3cm) 
-- (canvas polar cs:angle=0,radius=3.9cm);
\draw[thick] (canvas polar cs:angle=0,radius=2cm) 
-- (canvas polar cs:angle=0,radius=2.9cm);

\draw[thick] (canvas polar cs:angle=-2,radius=7cm) arc (-2:-17:7cm);
\draw[thick] (canvas polar cs:angle=-2,radius=5cm) arc (-2:-17:5cm);
\draw[thick] (canvas polar cs:angle=0,radius=3.5cm) arc (0:-15:3.5cm);
\draw[thick] (canvas polar cs:angle=0,radius=2.5cm) arc (0:-15:2.5cm); 

\draw[thick] (canvas polar cs:angle=-2,radius=6cm) arc (-2:-180:6cm);
\draw[thick] (canvas polar cs:angle=-2,radius=4cm) arc (-2:-176:4cm);
\draw[thick] (canvas polar cs:angle=0,radius=3cm) arc (0:-172:3cm);
\draw[thick] (canvas polar cs:angle=0,radius=2cm) arc (0:-168:2cm);

\draw[] (canvas polar cs:angle=-180,radius=6cm) 
-- (canvas polar cs:angle=-180,radius=1cm);
\draw[] (canvas polar cs:angle=-176,radius=4cm) 
-- (canvas polar cs:angle=-176,radius=.5cm);
\draw[] (canvas polar cs:angle=-172,radius=3cm) 
-- (canvas polar cs:angle=-172,radius=.25cm);
\draw[] (canvas polar cs:angle=-168,radius=2cm) 
-- (canvas polar cs:angle=-168,radius=.125cm);

\draw[thick] (canvas polar cs:angle=-182,radius=1.95cm) 
-- (canvas polar cs:angle=-182,radius=1.5cm);
\draw[thick] (canvas polar cs:angle=-182,radius=1.45cm) 
-- (canvas polar cs:angle=-182,radius=1cm);

\draw[thick] (canvas polar cs:angle=-182,radius=1.25cm) arc (-182:-197:1.25cm);
\draw[thick] (canvas polar cs:angle=-182,radius=1.75cm) arc (-182:-197:1.75cm);

\draw[thick] (canvas polar cs:angle=-182,radius=1cm) arc (-182:-352:1cm);
\draw[thick] (canvas polar cs:angle=-182,radius=1.5cm) arc (-182:-356:1.5cm);

\draw[] (canvas polar cs:angle=-352,radius=1cm) 
-- (canvas polar cs:angle=-352,radius=.0625cm);
\draw[] (canvas polar cs:angle=-356,radius=1.5cm) 
-- (canvas polar cs:angle=-356,radius=.03125cm);


\node at (9,-.3) {$p_{0}$};
\node at (6,.5) {$p_{1}$};

\node at (0,-6.3) {$p_{00}$};
\node at (0,-4.3) {$p_{01}$};
\node at (0,-3.3) {$p_{10}$};
\node at (0,-2.3) {$p_{11}$};
\node at (0,1.7) {$p_{000}$};
\node at (0,.7) {$p_{001}$};

\node at (7.5,-.5) {$I_{00}$};
\node at (6.5,-2.5) {$S_{00}$};
\node at (4,-5) {$T_{00}$};
\node at (-5,.3) {$J_{00}$};

\end{tikzpicture}
\caption{Some complementary components of the domain $\Omega$ constructed in the proof of Proposition~\ref{tripodexample}.}
\label{fig1} 
\end{figure}

Since $\Omega$ is countably connected, the He-Schramm theorem \cite{HS:93} guarantees the existence of a conformal homeomorphism $f\colon\Omega \to D$ 
onto a circle domain $D$. Moreover, $f$ is unique up to postcomposition by a M\"obius transformation. To show that 
$\hat{f}(\{0\}) \in \mathcal{C}_N(D)$, we denote by $\Gamma$ the family of paths in $\hat{\Omega}$ joining $p_\emptyset$ and $\{0\}$.

Towards a contradiction, assume that $\hat{f}(\{0\})$ is a point-component. Then we have $\Mod(\hat{f}(\Gamma)) =0$, which can be proved by applying 
\cite[Theorem 6.1(2)]{Sch:95} to a sequence of annuli (or by modifying the proof of \eqref{upperbound} in the special case of circle domains). 
Since the transboundary modulus is conformally invariant (Lemma \ref{modinvariance}), 
the desired contradiction will follow once we prove that 
\begin{equation}
\label{tilalaskenta}
\Mod(\Gamma)>0. 
\end{equation} 

We denote by $W_\infty$ the collection of infinite words $w_1w_2\cdots$, where $w_j \in \{0,1\}$. We equip $W_\infty$ with the unique probability measure $\mu$ satisfying $\mu(A_{w})=2^{-k}$ for all $k \geq 1$ and $w \in W_k$. Here  
$$
A_w=\{w_\infty \in W_\infty: \, w_\infty=ww_{k+1}w_{k+2}\cdots \}. 
$$
Let $\rho:\hat{\Omega} \to [0,\infty]$ be an arbitrary Borel function satisfying   
\begin{equation} 
\label{rohma}
\int_{\Omega} \rho^2 \, dA + \sum_{w \in W} \rho(w)^2=1. 
\end{equation} 
We will find a $v_\infty=v_1v_2\cdots \in W_\infty$ so that 
\begin{equation} \label{sahmo} 
\sum_{k=1}^\infty \rho(p_{\bar{v}_k}) \leq 1.  
\end{equation} 
Here $\bar{v}_k=v_{1}v_2\cdots v_k$. We first notice that 
\begin{eqnarray*} 
\int_{W_\infty} \sum_{k=1}^\infty \rho(p_{\bar{w}_k}) \, d\mu(w_\infty) 
= \sum_{k=1}^\infty \sum_{w \in W_k} \mu(A_w)\rho(p_w)
= \sum_{k=1}^\infty 2^{-k} \sum_{w \in W_k} \rho(p_w)=:S. 
\end{eqnarray*} 
The Cauchy-Schwarz inequality yields (notice that $\card W_k=2^{k}$) 
\begin{eqnarray*}
S \leq \sum_{k=1}^\infty 2^{-k/2} \Big(\sum_{w \in W_k} \rho(p_w)^2 \Big)^{1/2} 
\leq \Big( \sum_{k=1}^\infty 2^{-k} \Big)^{1/2} \Big( \sum_{w \in W} \rho(p_w)^2 \Big)^{1/2} \leq 1, 
\end{eqnarray*} 
where the last inequality follows from \eqref{rohma}. Combining the estimates shows that there indeed 
exists a $v_\infty=v_1v_2\cdots \in W_\infty$ satisfying \eqref{sahmo}. 

Recall that for each $\bar{v}_k=v_1v_2\cdots v_k$, $k=1,2,\ldots$, there is a family $\Gamma_{\bar{v}_k}$ 
of paths connecting $p_{\bar{v}_{k-1}}$ and $p_{\bar{v}_{k}}$ in $\Omega$ so that 
$\Mod(\Gamma_{\bar{v}_k}) > 4^k$. Now \eqref{rohma} implies that for every $k$ there is an 
$\gamma_k \in \Gamma_{\bar{v}_k}$ so that 
\begin{equation} \label{ubles}
\int_{\gamma_k} \rho \, ds < 2^{-k}. 
\end{equation}
Indeed, otherwise $2^k \rho$ would be admissible for $\Gamma_{\bar{v}_k}$ and thus 
$\Mod(\Gamma_{\bar{v}_k}) \leq 4^k$ by \eqref{rohma}, which is a contradiction. 
Concatenating the paths $\pi_\Omega \circ \gamma_k$, $k=1,2,\ldots$, yields a path $\gamma \in \Gamma$ 
so that $|\gamma| \cap \mathcal{C}(\Omega)$ only contains $\{0\}$, $p_\emptyset$, and the elements $p_{\bar{v}_k}$, $k=1,2,\ldots$. Combining \eqref{sahmo} and \eqref{ubles} gives 
\begin{equation} \label{anko}
\int_{\gamma \cap \Omega} \rho \, ds + \sum_{k=1}^\infty \rho(p_{\bar{v}_k}) \leq 2. 
\end{equation} 
We have proved that for every $\rho$ satisfying \eqref{rohma} there is an $\gamma \in \Gamma$ which satisfies \eqref{anko}. Lemma \ref{modlemma} now shows that \eqref{tilalaskenta} holds. We conclude that $\Omega$ has all the desired properties.
\end{proof}

\begin{remark}
It is also possible to construct a countably connected domain $\Omega \subset \hatc$ which satisfies Packing Condition (ii) (but not Quasitripod Condition (i)) in Theorem \ref{tripodkoebe}, so that $\{0\} \in \mathcal{C}(\Omega)$ and $\diam(\hat{f}(\{0\}))>0$ for every conformal homeomorphism $f\colon\Omega \to D$ onto a circle domain $D$. Namely, one can modify the \emph{dyadic slit domain} construction of Hakobyan and Li \cite{HakLi23} on unions of squares to get a domain $\tilde{\Omega} \subset \hatc$ whose complementary components are $\infty$ and vertical segments  contained in an infinite strip, and define $\Omega$ as the image of $\tilde{\Omega}$ under the inversion $z \mapsto z^{-1}$. 

The construction can be carried out so that $\Omega$ satisfies Condition (ii) in Theorem \ref{tripodkoebe}, and so that if $J \subset \Omega$ is a Jordan curve then  $\mod(\Gamma)>0$ for the family $\Gamma$ of paths in $\hat{\Omega}$ that connect $\pi_\Omega(J)$ and $\pi_\Omega(\{0\})$; see \cite[Lemma 7.1]{HakLi23}. On the other hand, if $f\colon \Omega \to D$ is a conformal homeomorphism onto a (countably connected) circle domain such that $\hat{f}(\pi_\Omega(\{0\}))$ is a point-component, then it follows as in the proof of Proposition \ref{tripodexample} that $\mod \hat{f}(\Gamma)=0$. This contradicts Lemma \ref{modinvariance}, and so $\hat{f}({\pi_\Omega(\{0\}}))$ must be a disk. We leave the details to the interested reader. 
\end{remark} 

\begin{example} \label{elltwoexample} 
We describe a domain $\Omega \subset \hatc$ for which the assumptions of Theorem \ref{tripodkoebe} are satisfied but 
\begin{equation} \label{slay}
\sum_{p \in \mathcal{C}_N(\Omega)} \diam(p)^2=\infty. 
\end{equation}

We apply a well-known construction of a Cantor set $K \subset \mathbb{C}$ with positive area; see e.g. \cite[Proof of Theorem 4.10]{HenKos14}. Start by dividing the square $[-1,1]^2$ into four congruent subsquares $Q'_j=Q(z_j,\frac{1}{2})$, $j \in \{1,2,3,4\}$, with disjoint interiors; here  $Q(z,r)$ is the closed square with center $z$, side length $2r$, and sides parallel to the coordinate axes. 
Set $Q_j=Q(z_j,\frac{1}{2}-\epsilon_1)$. 

Proceeding by induction, suppose that we have constructed $4^k$ disjoint squares $Q_v=Q(z_v,r_k)$, $v \in \{1,2,3,4\}^k$. 
We divide each $Q_v$ into four congruent subsquares $Q'_{vj}=Q(z_{vj},\frac{r_k}{2})$, $j \in \{1,2,3,4\}$, and set $Q_{vj}=Q(z_{vj},\frac{r_k}{2}-\epsilon_{k+1})$. The Cantor set $K$ is 
\begin{equation} \label{ahmu}
K= \bigcap_{k=1}^\infty \bigcup_{v \in \{1,2,3,4\}^k} Q_v. 
\end{equation}
The parameters $\epsilon_k>0$ can be chosen so that $\operatorname{Area}(K)>0$.

\begin{figure}[h]
\begin{center}
\begin{tikzpicture}[scale=0.1]
  \newcommand{\drawplus}[2]{%
    \draw[thick] ($#1 + (-#2/2,0)$) -- ($#1 + (#2/2,0)$);
    \draw[thick] ($#1 + (0,-#2/2)$) -- ($#1 + (0,#2/2)$);
  }
   \newcommand{\drawplusthick}[2]{%
    \draw[very thick] ($#1 + (-#2/2,0)$) -- ($#1 + (#2/2,0)$);
    \draw[very thick] ($#1 + (0,-#2/2)$) -- ($#1 + (0,#2/2)$);
  }
   \newcommand{\drawplusdotted}[2]{%
    \draw[thick, densely dotted] ($#1 + (-#2/2,0)$) -- ($#1 + (#2/2,0)$);
    \draw[thick, densely dotted] ($#1 + (0,-#2/2)$) -- ($#1 + (0,#2/2)$);
  }
  \drawplusthick{(0,0)}{64}
  \foreach \x in {-16,16} {
    \drawplusthick{(\x,16)}{30}
    }
      \foreach \x in {-16,16} {
    \drawplusthick{(\x,-16)}{30}
    }

\foreach \y in {-24,-8,8,24} {
  \foreach \x in {-24,-8,8,+24} {
    \drawplus{(\x,\y)}{14}
    }
    }
28
\foreach \y in {-28,-20,-12,-4,4,12,20,28} {
  \foreach \x in {-28,-20,-12,-4,4,12,20,28} {
    \drawplusdotted{(\x,\y)}{5.2}
    }
    }
\end{tikzpicture}
\end{center}
\caption{Example of a domain whose non-trivial complementary components (the ``plus'' signs) satisfy the conditions of Theorem~\ref{tripodkoebe} and their diameters are not $\ell^2$-summable.}
\label{plus-signs}
\end{figure}

In order to define $\Omega$, we fix $v$ in \eqref{ahmu} and let $p_v$ be the largest ``plus sign'' in $Q_v$. That is, $p_v$ is the union of two horizontal and two vertical segments of equal length, each connecting $z_v$ to one side of $Q_v$. The continua $p_v$ are pairwise disjoint and do not intersect $K$. 

Let $\Omega \subset \hatc$ be the domain whose complement is the union of $K$ and all the continua $p_v$. 
The union of any three of the four segments which define the plus sign is the image of a quasisymmetric (in fact, bi-Lipschitz) map from the standard tripod. Thus, $\Omega$ satisfies Condition (i) in Theorem \ref{tripodkoebe}. In order to prove \eqref{slay}, we 
notice that $\diam(p_v)^2 = \operatorname{Area}(Q_v)$ for every $v$. Therefore, we have 
$$
\sum_{v \in \{1,2,3,4\}^k} \diam(p_v)^2 \geq 
\sum_{v \in \{1,2,3,4\}^k} {\operatorname{Area}(Q_v)} \geq {\operatorname{Area}(K)}>0 
$$
for every $k \in \mathbb{N}$. Summing over $k$ implies \eqref{slay}. 

It remains to prove the Packing Condition (ii) in Theorem \ref{tripodkoebe}. We fix a point $z_0 \in \mathbb{C}$ and a radius $r>0$. By covering $\mathbb{D}(z_0,r)$ with a collection of disks $\mathbb{D}_\alpha:=\mathbb{D}(z_\alpha,\frac{r}{100})$ so that the disks $\mathbb{D}(z_\alpha,\frac{r}{200})$ are pairwise disjoint, it suffices to prove the cardinality bound  
\begin{equation} 
\label{sanka}
\card \mathcal{C}_\alpha=\card\{p_v \in \mathcal{C}_N(\Omega): \, 
\diam(p_v) \geq r, \, p_v \cap \mathbb{D}_\alpha \neq \emptyset \} 
\leq 6  
\end{equation} 
for every $\alpha$. Estimate \eqref{sanka} follows from two geometric properties which are straightforward to verify: 
\begin{enumerate} 
\item At most two disjoint squares $Q_v$ satisfy $p_v \in \mathcal{C}_\alpha$. 
\item If $p_v \in \mathcal{C}_\alpha$, then there are at most two squares $Q_{v'} \subsetneq Q_v$ such that  $p_{v'}\in \mathcal{C}_\alpha$. 
\end{enumerate}
We have established the desired properties of $\Omega$. 
\end{example}

\section{Cospread domains, Proof of Proposition \ref{cospreadcon}} \label{sec:propoproof}
To start the proof of Proposition \ref{cospreadcon}, we notice that the definition of cospread domains already contains Quasitripod Condition (i) in Theorem \ref{tripodkoebe}. We state the remaining claims of Proposition \ref{cospreadcon} as the following two propositions. 

\begin{proposition}
    \label{pack1}
    Let $\Omega \subset \hatc$ be an $H$-cospread domain. There is an $N$ which depends only on $H$ so that for every $z_0 \in \mathbb{C}$ and $r>0$, 
    \begin{equation}
    \label{eq1}
 \card \{p \in \mathcal{C}_N(\Omega): \,\diam(p) \geq r, \,  p \cap \mathbb{D}(z_0,r) \neq \emptyset \} \leq N.
    \end{equation}
\end{proposition}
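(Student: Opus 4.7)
The plan is to reduce the proposition to a universal packing bound for $H$-quasitripods via the $H$-spread condition, and then prove the packing bound. Fix $z_0 \in \mathbb{C}$ and $r > 0$, and let $\mathcal{F}$ denote the family in the statement. For each $p \in \mathcal{F}$, choose $z_p \in p \cap \mathbb{D}(z_0, r)$; since $r/2 < r \leq \diam(p)$, the $H$-spread condition applied to $p$ at $z_p$ produces an $H$-quasitripod $T_p \subset p \cap \mathbb{D}(z_p, r/2) \subset \mathbb{D}(z_0, 3r/2)$ with $\diam(T_p) \geq r/(2H)$. The $T_p$'s are pairwise disjoint because the components are, so after rescaling so that $z_0 = 0$ and $r = 1$, it suffices to bound the cardinality of any family of pairwise disjoint $H$-quasitripods of diameter $\geq 1/(2H)$ inside $\mathbb{D}(0, 3/2)$ by a constant $N = N(H)$.

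For this packing bound, the first step is to invoke V\"ais\"al\"a's theorem (as in the proof of Proposition \ref{blockingprop}) to upgrade each weakly $H$-quasisymmetric parametrization $\phi_i \colon T_0 \to T_i$ to an $\eta$-quasisymmetry with $\eta = \eta(H)$. This produces a canonical branch point $w_i = \phi_i(0)$ and three endpoints $v_i^k = \phi_i(e^{2\pi ik/3})$ whose six pairwise Euclidean distances are all comparable to $\diam(T_i)$, and for every sufficiently small radius $\rho$, the three arms of $T_i$ first meet $\partial\mathbb{D}(w_i, \rho)$ at three points pairwise separated by Euclidean distance at least $c(H)\rho$. Consequently, these three arms partition $\mathbb{D}(w_i, \rho)$ into three Jordan quasi-sectors, each of angular extent (seen from $w_i$) at least $c'(H) > 0$.

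The main obstacle is to translate the pairwise disjointness of the $T_i$'s into a uniform lower bound $\delta_0 = \delta_0(H)$ on $|w_i - w_j|$, after which a standard disk-packing estimate in $\mathbb{D}(0, 2)$ gives $N \leq C(H)/\delta_0^2$. I would attempt this via a local planarity analysis: if $|w_i - w_j| < \delta$ with $\delta \ll \rho$, then the connected component of $T_j \cap \mathbb{D}(w_i, \rho)$ meeting $w_j$ lies in a single quasi-sector $S$ of the partition of $\mathbb{D}(w_i, \rho)$ by $T_i$, so the three arms of $T_j$ all first exit $\mathbb{D}(w_i, \rho)$ through the single boundary arc of $S$. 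Comparing the angular spread of these three exit points seen from $w_j$ (at least $c'(H)$ by the quasisymmetric bound for $T_j$) against that seen from $w_i$ (bounded above by the angular extent of the boundary arc of $S$, perturbed by $O(\delta/\rho)$ when switching viewpoints) yields a contradiction whenever $S$ is not too wide. The hardest step will be handling anomalously wide sectors $S$, which I expect requires descending to a sub-scale: apply the $H$-spread condition to $p_i$ at a sub-radius inside $S$ to produce a further quasitripod of $T_i$ that subdivides $S$ and restarts the angular analysis, eventually forcing the required contradiction and hence the lower bound $\delta_0(H)$.
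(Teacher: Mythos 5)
Your first paragraph --- choosing $z_p$, invoking the $H$-spread condition at $z_p$, and reducing to a packing bound for pairwise disjoint $H$-quasitripods of diameter $\gtrsim r$ in a disk of radius $\lesssim r$ --- is exactly the paper's reduction (Lemma \ref{th2}). The packing bound itself is where your approach diverges, and this is where there is a genuine gap.

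The claim you try to prove, a uniform lower bound $|w_i - w_j| \geq \delta_0(H)$ on the distance between branch points, is false. Take a quasitripod $T_1$ with a wide sector (angular width close to $2\pi - 2c'(H)$ near $w_1$), and place a second quasitripod $T_2$ with narrow arms (total angular spread close to $2c'(H)$) inside that sector, with $w_2$ as close to $w_1$ as you like; both can be inflated to diameter $\geq 1/(2H)$ inside $\mathbb{D}(0,3/2)$ and remain disjoint. Your own angular comparison reveals the problem: the sector $S$ has angular width at least $c'(H)$ (by porosity for $T_i$), and the exit points of $T_j$'s arms span at least $c'(H)$ (by porosity for $T_j$); these are the \emph{same} order bound, so the ``contradiction whenever $S$ is not too wide'' never actually triggers. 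The sub-scale descent you propose as a repair does not help: passing to a smaller radius $\rho$ around $w_i$ does not shrink the angular width of $S$ --- the sector decomposition induced by $T_i$ is, up to quasisymmetric distortion, self-similar at all scales near $w_i$ --- and the additional sub-scale quasitripod supplied by the $H$-spread condition for $p_i$ has diameter tied to the sub-radius, so it cannot enforce a constraint at the original scale $\rho$.

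The paper's Lemma \ref{th2} does not attempt to separate branch points. Instead it covers $\mathbb{D}(z_0, Mr)$ by $O((M/\delta)^2)$ disks $D_k$ of radius $\delta r$ and bounds $\card\{T : 0_T \in D_k\}$ by an arc-counting argument on the fixed circle $\partial(2D_k)$: each $T$ with $0_T \in D_k$ has its three arms exit $2D_k$ at three points cutting $\partial(2D_k)$ into three arcs each of length $\geq \theta r$, and --- since the tripods are disjoint, so later tripods sit entirely in a single sector of earlier ones --- ordering the tripods by the length of their shortest such arc shows that each new tripod increases the total arc-length covered by at least $\theta r$. Since the circle has length $4\pi\delta r$, this caps the count at $4\pi\delta/\theta$. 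That counting device is the ingredient your proposal is missing; without it (or an equivalent), the wide-sector case leaves the argument open.
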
 

\begin{proposition} \label{Quasiinv} 
Let $\Omega \subset \hatc$ be an $H$-cospread domain and $\phi \colon \hatc \to \hatc$ an $\alpha$-quasi-M\"obius map. Then $\phi(\Omega)$ is $H'$-cospread, where $H'$ depends only on $H$ and $\alpha$. 
\end{proposition}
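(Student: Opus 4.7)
The plan is to exploit the classical fact that an $\alpha$-quasi-M\"obius self-homeomorphism of $\hatc$ becomes a genuine Euclidean $\alpha$-quasisymmetry on $\mathbb{C}$ as soon as it fixes $\infty$; this follows directly from the cross-ratio inequality applied with $\infty$ as the fourth point, yielding
\[
\frac{|\phi(a)-\phi(c)|}{|\phi(b)-\phi(c)|} = [\phi(a),\phi(b),\phi(c),\infty] \leq \alpha([a,b,c,\infty]) = \alpha\!\left(\frac{|a-c|}{|b-c|}\right) \qquad (a,b,c\in\mathbb{C}).
\]
Writing $\phi=\phi_0\circ M$, where $M$ is a M\"obius map sending $\infty$ to $\phi^{-1}(\infty)$ and $\phi_0$ fixes $\infty$, the problem reduces to two claims: (a) quasisymmetric self-maps of $\mathbb{C}$ preserve $H$-cospreadness with constants depending only on $H$ and the quasisymmetric function, and (b) M\"obius maps preserve $H$-cospreadness with constants depending only on $H$.

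For (a), fix $p\in\mathcal{C}_N(M(\Omega))$, $\tilde p=\phi_0(p)$, $w_0\in\tilde p\cap\mathbb{C}$, and $0<\rho<\diam(\tilde p\cap\mathbb{C})$; set $z_0=\phi_0^{-1}(w_0)\in p\cap\mathbb{C}$. Using the two-sided Euclidean $\alpha$-quasisymmetry of $\phi_0$, I would pick $z_1\in p\cap\mathbb{C}$ (exists by connectedness) so that $\phi_0(z_1)\in\mathbb{D}(w_0,\rho)$ with $|\phi_0(z_1)-w_0|$ comparable to $\rho$, and set $r=|z_0-z_1|$; the QS estimates in both directions yield $\phi_0(\mathbb{D}(z_0,r))\subset\mathbb{D}(w_0,\rho)$, $r\leq\diam(p\cap\mathbb{C})$, and $r\geq \rho/C(\alpha)$. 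The $H$-spread of $p$ at $(z_0,r)$ delivers an $H$-quasitripod $T\subset p\cap\mathbb{D}(z_0,r)$ with $\diam T\geq r/H$, and $\phi_0(T)$ is contained in $\tilde p\cap\mathbb{D}(w_0,\rho)$, has diameter at least $\rho/H'$ by the other side of the QS estimate, and is an $H'$-quasitripod because the composition of the weak-$H$-quasisymmetry $T_0\to T$ with the (strong) $\alpha$-quasisymmetry $\phi_0|_T$ is weakly $H'$-quasisymmetric for some $H'=H'(H,\alpha)$.

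The main obstacle is claim (b), the M\"obius-invariance. Although M\"obius maps are universally quasi-M\"obius with $\alpha_0(t)=t$, they do not preserve Euclidean disks globally; a target disk $\mathbb{D}(w_0,\rho)$ with $w_0$ close to $M(\infty)$ pulls back under $M$ to a generalized disk whose unbounded portion cannot be fed directly into the spread condition for $p$. I would handle this by an additional M\"obius conjugation sending $M(\infty)$ to $\infty$, which brings the configuration into the bounded regime where the quasisymmetric argument of (a) applies verbatim (since affine maps act trivially on the spread condition). The constants arising from this normalization are absorbed into $H'$ via the fact that cross-ratios are exactly preserved by M\"obius maps, so no $M$-dependence remains.
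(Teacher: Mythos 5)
Your decomposition $\phi = {}$(quasisymmetric map fixing $\infty$)${}\circ{}$(M\"obius map) is exactly the reduction the paper uses, and your part (a) matches the paper's quasisymmetric argument in substance; the small inaccuracies there (for instance, the asserted bound $r \geq \rho/C(\alpha)$, which is false for quasisymmetries that compress near $z_0$) are all subsumed by the paper's distortion Lemma \ref{qsslemma}, which delivers $\diam(\phi_0(T)) \geq \rho/H_2$ directly without comparing $r$ to $\rho$. The genuine gap is part (b). You correctly diagnose the obstruction -- a M\"obius map with a finite pole need not carry $\mathbb{D}(z_0,r)$ to a bounded set, so the spread hypothesis on the preimage component cannot be fed a Euclidean disk -- but the fix you propose does not close it. Conjugating by an auxiliary M\"obius map $\psi$ with $\psi(M(\infty)) = \infty$ does make $\psi \circ M$ affine, but then $M = \psi^{-1} \circ (\psi \circ M)$ and the difficulty is merely shifted to $\psi^{-1}$, which again has a finite pole; the argument is circular. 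Cross-ratio invariance does not rescue this: the spread condition is phrased in Euclidean disks and diameters, which are not cross-ratio quantities, and you would have to prove a M\"obius-invariant reformulation from scratch before it could help.

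The paper's resolution of (b) is the one idea your sketch is missing. It reduces to the inversion $\iota(z) = z^{-1}$ (every M\"obius map is a composition of affine maps and $\iota$, and affine maps obviously preserve spreadness), and then exploits Lemma \ref{kissa}: the restriction of $\iota$ to $\overline{\mathbb{D}}(w_0,s)$ is $\eta$-quasisymmetric with $\eta(t)=3t$ whenever $|w_0| \geq 2s$. Given $z_0 \in p \cap \mathbb{C}$ and $r \leq \diam(p)$, take $k_0 = z_0$ if $|z_0| \geq r/10$; otherwise use $\diam(p) \geq r$ to pick $k_0 \in p \cap \mathbb{S}(z_0,r/2)$, which forces $|k_0| \geq r/10$. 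In either case $\mathbb{D}(k_0, r/20) \subset \mathbb{D}(z_0,r)$, the inversion is quasisymmetric there, and the part-(a) argument applies. The takeaway is that the hypothesis $\diam(p) \geq r$ supplies a base point in $p$ far enough from the pole for a local-quasisymmetry argument to go through; no global M\"obius normalization can substitute for this step.
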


We recall the definition of quasi-M\"obius maps. The \emph{cross-ratio} of distinct points 
$z_1,z_2,z_3,z_4 \in \hatc$ is 
$\displaystyle
[z_1,z_2,z_3,z_4]:= \frac{q(z_1,z_2)q(z_3,z_4)}{q(z_1,z_3)q(z_2,z_4)}, 
$
where $q$ is the chordal distance defined by 
\begin{eqnarray*}
q(z,w)=\frac{|z-w|}{\sqrt{1+|z|^2}\sqrt{1+|w|^2}} \quad \text{and} \quad 
q(z,\infty)=\frac{1}{\sqrt{1+|z|^2}}, \quad \text{ $z,w \in \mathbb{C}$}.
\end{eqnarray*}
A homeomorphism $\phi \colon \hatc \to \hatc$ is \emph{quasi-Möbius} if there is a homeomorphism $\alpha\colon [0,\infty) \to [0,\infty)$ so that 
\begin{equation}\label{Hool}
[\phi(z_1),\phi(z_2),\phi(z_3),\phi(z_4)] \leq \alpha([z_1,z_2,z_3,z_4]) 
\end{equation} 
for all distinct $z_1,z_2,z_3,z_4 \in \hatc$. To emphasize the role of $\alpha$, we use the term \emph{$\alpha$-quasi-M\"obius}. Notice that Möbius transformations are quasi-Möbius maps with $\alpha(t)=t$. 

Recall that a homeomorphism $\phi \colon E \to F$ between subsets of $\mathbb{C}$ is (strongly) $\eta$-\emph{quasisymmetric} if there is a homeomorphism $\eta\colon [0,\infty) \to [0,\infty)$ so that for all $ z_1,z_2,z_3 \in E$ { satisfying } $|z_2-z_1| \leq t|z_3-z_1|$, $0<t<\infty$, we have 
$$
|\phi(z_2)-\phi(z_1)| \leq \eta(t)|\phi(z_3)-\phi(z_1)|.
$$ 
It follows from the definitions that compositions and inverses $\phi$ of quasi-M\"obius (resp., quasisymmetric) maps $\phi_1$ and/or $\phi_2$ are quasi-M\"obius (resp., quasisymmetric). Moreover, the control function $\alpha$ (resp., $\eta$) of $\phi$ depends only on the control functions of $\phi_1$ and/or $\phi_2$ (see \cite[Prop. 10.6]{Hei:01}). If $E \subset \mathbb{C}$ is connected, then by V\"ais\"al\"a's theorem (which was already applied in the proof of Proposition \ref{blockingprop}), weakly $H$-qua\-si\-sym\-met\-ric maps $\phi \colon E\to F$ are $\eta$-quasisymmetric with $\eta$ depending only on $H$. 
\subsection{Proof of the packing condition, Proposition \ref{pack1}} 

    We fix a point $z_0 \in \mathbb{C}$ and a radius $r>0$, and denote  
    $$
    \mathcal{P}:=\{ p \in \mathcal{C}_N(\Omega): \diam(p) \geq r, \, p \cap \mathbb{D}(z_0,r) \neq \emptyset \} \, .
    $$
    Given $p \in \mathcal{P}$, we choose a point $z_p \in p \cap \mathbb{D}(z_0,r)$. Since $r \leq \diam(p)$ and $p$ is $H$-spread, there is an $H$-quasitripod $T_p \subset p \cap \mathbb{D}(z_p,r)$ with $\diam(T_p) \geq r/H$. Clearly $T_p \subset \mathbb{D}(z_0,2r)$. Since the quasitripods $T_p$ are pairwise disjoint,  claim \eqref{eq1} is an immediate consequence of the next lemma.

\begin{lemma}
\label{th2}
     Let $M,H \geq 1$ and suppose that $\mathcal{T}$ is a collection of pairwise disjoint $H$-quasitripods $T \subset \mathbb{D}(z_0,Mr)$ satisfying $\diam(T) \geq r$. Then $\card \mathcal{T} \leq N$, where $N$ depends only on $M$ and $H$. 
\end{lemma}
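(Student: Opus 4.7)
The first step is to apply V\"ais\"al\"a's theorem to promote each weakly $H$-quasisymmetric parametrization $\phi_T\colon T_0\to T$ (for $T\in\mathcal{T}$) to an $\eta_H$-quasisymmetric one, with $\eta_H$ depending only on $H$. For each $T$ I will then record a ``skeleton'' consisting of the center $c_T := \phi_T(0)$, the three vertex images $p_T^j := \phi_T(e^{i \cdot 2j\pi/3})$, and the three sub-arcs $A_T^j := \phi_T([0, e^{i\cdot 2j\pi/3}])$. Combining $\diam(T)\ge r$ with the quasisymmetric bound, the four points of each skeleton will be pairwise separated by at least $r/C_H$ for some $C_H = C_H(H)$, and each $A_T^j$ will be a $C_H$-quasi-arc meeting the other two arms only at $c_T$.

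The plan is then to deduce Lemma~\ref{th2} by a planar-packing argument on the collection of skeletons. The union $\bigcup_{T\in\mathcal{T}}T$ is a disjoint union of $n = \card\mathcal{T}$ uniformly spread $Y$-shaped trees in $\mathbb{D}(z_0, Mr)$: each has three arms of diameter $\ge r/C_H$, and by the $\eta_H$-quasisymmetry the three arms at $c_T$ cut out three local ``sectors'' whose angular widths at $c_T$ are bounded below by a constant $\theta_H > 0$.

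The bound $n\le N(M,H)$ will then be obtained by analyzing the planar combinatorics of these disjoint $Y$-shapes. Two distinct $T,T' \in\mathcal{T}$ will be related by a ``nesting relation'': the center of one must lie in some local sector of the other. Iterating will yield a partial order on $\mathcal{T}$ by nesting depth. The uniform angular-width lower bound $\theta_H$ for each sector, together with the uniform arm-length lower bound $r/C_H$ and the confinement of all tripods to $\mathbb{D}(z_0, Mr)$, will restrict both the nesting depth and the branching factor at each level to quantities controlled by $M$ and $H$. Multiplying these bounds will give the desired $n \le N(M,H)$.

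The main obstacle will be making the nesting/branching estimate fully quantitative. This reduces to a careful planar-combinatorial analysis of how disjoint $Y$-shaped trees with uniformly spread arms can be embedded in a bounded disk: one must show that the angular-width spread $\theta_H$ of the sectors propagates to a genuinely nested structure with finite depth and bounded branching, both controlled purely by $M$ and $H$.
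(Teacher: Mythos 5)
Your overall strategy (Väisälä to upgrade to strong quasisymmetry, then a planar combinatorial argument on centers and arms of the $Y$-shapes) is in the same spirit as the paper's proof, but there is a concrete error and an unresolved gap.

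The error is the claim that ``the three arms at $c_T$ cut out three local sectors whose angular widths at $c_T$ are bounded below by a constant $\theta_H>0$.'' Quasisymmetry controls ratios of distances, not angles at a point: a quasisymmetric image of $T_0$ can have its three arms leave the center with arbitrarily small mutual angles (the arms can spiral or curl away). What quasisymmetry \emph{does} control is metric separation at any fixed relative scale away from the center, which is exactly what the paper exploits: it covers $\mathbb{D}(z_0,Mr)$ by disks $D_k$ of radius $\delta r$, groups tripods by which $D_k$ their center lies in, and for $T$ in such a group looks at where the three arms first cross the circle $\partial B_k$ of radius $2\delta r$. Quasisymmetry guarantees each of the three boundary arcs cut out on $\partial B_k$ has length $\geq\theta r$ with $\theta=\theta(H)$, and this is the quantitative input you were missing.

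The gap is the unresolved ``nesting depth and branching factor'' analysis, which you yourself flag as the main obstacle. The paper resolves it with a clean one-step packing argument rather than a depth-times-branching bound: since the tripods are pairwise disjoint, for any two tripods $T,T'$ in the same group, all three exit points of $T$ on $\partial B_k$ land inside a single boundary arc of $T'$. Ordering the tripods by their shortest boundary-arc length, each new tripod contributes at least $\theta r$ of \emph{fresh} arc length to $\partial B_k$, so the count is at most $\ell(\partial B_k)/(\theta r)=4\pi\delta/\theta$, which depends only on $H$. Multiplying by the number $\lesssim (M/\delta)^2$ of disks $D_k$ gives $N=N(M,H)$. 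So while your intuition of nesting is correct (the disjointness of the $Y$-shapes forces the exit-point triples to nest), you should replace the angular estimate at $c_T$ by the arc-length estimate on $\partial B_k$, and replace the depth-times-branching bookkeeping by the additive arc-length packing on that circle.
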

\begin{proof}
    
    Given $T \in \mathcal{T}$, recall that there is an $\eta$-quasisymmetric homeomorphism $\phi_T\colon T_0 \to T$; here $T_0$ is the standard tripod. We call $\phi_T(0)$ the \textit{center} $0_T$ of $T$ and the   components of $T \setminus 0_T$ \textit{the branches} of $T$. 

    We fix $0 < \delta < 1$ to be chosen later and cover $\mathbb{D}(z_0,Mr)$ with disks $D_1,\ldots, D_n$ of radius $\delta r$ so that $n \leq 100(M \delta^{-1})^2$. Given $1 \leq k \leq n$, we denote by $\mathcal{T}_k$ the collection of elements $T \in \mathcal{T}$ for which $0_T \in D_k$. 
    Since $\mathcal{T}=\bigcup_k \mathcal{T}_k$, the lemma follows if we can choose $\delta$ depending only on $H$ so that for some $N=N(H)$,  
    \begin{equation} \label{cardbound}
    \card \mathcal{T}_k \leq N \quad \text{for all } 1 \leq k \leq n.  
    \end{equation}
    
    Towards \eqref{cardbound}, a straightforward application of quasisymmetry shows that if $T \in \mathcal{T}_k$ and if $\delta$ is small enough, depending on $H$, then each of the branches $J_1(T),J_2(T),J_3(T)$ 
    of $T$ must leave $B_k=2D_k$, since $\diam(T) \geq r$. Here $2D_k$ is the disk with the center of $D_k$ and twice the radius. For $s \in \{1,2,3\}$, let $\alpha_s^T(t), 0\leq t \leq 1$, be a homeomorphic parameterization of $J_s(T)$ with $\alpha_s^T(0)=0_T$. We denote $a_s^T=\alpha_s^T(t_s)$, where 
$$
t_s:=\inf \{t: \ \alpha^T_s(t) \in \partial B_k\}. 
$$
The points $a^T_1,a^T_2,a^T_3$ partition $\partial B_k$ into subarcs $S_1(T),S_2(T),S_3(T)$. Another straightforward application of quasisymmetry shows that their lengths satisfy 
\begin{equation} \label{kaurism} 
\ell(S_s(T)) \geq \theta r \quad \text{for all } s \in \{1,2,3\}, 
\end{equation}
where $\theta >0$ depends only on $H$. 

We fix $S_T \in \{S_1(T),S_2(T),S_3(T)\}$ so that $\ell(S_T) \leq \ell(S_s(T))$ for $s \in \{1,2,3\}$. Fix a finite subcollection $\{T_1,T_2,\ldots,T_{L}\}$ of $\mathcal{T}_k$ so that 
$\ell(S_{T_1})\leq \ell(S_{T_2}) \leq \cdots \leq \ell(S_{T_L})$. We denote 
$S_{T_m}$ and $\ell(S_{T_m})$ by $S_m$ and $\ell_m$, respectively.
\begin{figure}[h]
\begin{center}
\begin{tikzpicture}[scale=.3]
\draw[dashed] (-10,0) circle (8cm);
\draw [very thick] plot [smooth ] coordinates {(-9,-2) (-15.7,-0.6) (-20,1)};
\draw [very thick] plot [smooth ] coordinates { (-9,-2) (-11, +2) (-15,+8)};
\draw [very thick] plot [smooth ] coordinates {(-9,-2) (-5,2) (-1,7)};
\draw [very thick] plot [smooth ] coordinates {(-10,-1) (-19,2) };
\draw [very thick] plot [smooth ] coordinates {(-10,-1) (-19, +4)};
\draw [very thick] plot [smooth ] coordinates {(-10,-1) (-18,8)};

\draw[dashed] (10,0) circle (8cm);
\draw [very thick] plot [smooth ] coordinates {(11,-2) (4.3,-0.6) (0,1)};
\draw [very thick] plot [smooth ] coordinates {(11,-2) (9, +2) (5,+8)};
\draw [very thick] plot [smooth ] coordinates {(11,-2) (15,2) (19,7)};

\draw [very thick] plot [smooth ] coordinates {(12,-2) (19,1)};
\draw [very thick] plot [smooth ] coordinates {(12,-2) (19,-2)};
\draw [very thick] plot [smooth ] coordinates {(12,-2) (15,-8)};

\filldraw node(p) at (-7,1.1) (above) {$T_2$};
\filldraw node(p) at (13,1.1) (above) {$T_2$};
\filldraw node(p) at (-15,2.5) (above) {$T_1$};
\filldraw node(p) at (15,-3) (below) {$T_1$};
\end{tikzpicture}
\end{center}
\caption{The possible relations between a pair of quasitripods with centers close to each other and large diameters.}
\label{packing-fig}
\end{figure}

Next, notice that there is an $s \in \{1,2,3\}$ so that $ a_{s'}^{T_1} \in S_s(T_2)$ for every $s'=1,2,3$. In particular, by our choice of the subarcs $S_T$ and the enumeration of the quasitripods $T_j$, either (Figure~\ref{packing-fig})
\begin{enumerate}
\item $S_1 \cap S_2 =\emptyset$ \quad (if $S_s(T_2)\neq S_2$), or 
\item $S_2$ contains $S_1$ and another subarc $S_{s'}(T_1)$ 
 \quad (if $S_s(T_2)= S_2$). 
\end{enumerate} 
Using \eqref{kaurism} we see that in both cases  $\ell(S_1 \cup S_2) \geq \theta r + \ell(S_1)$. An inductive argument shows that if $2 \leq m \leq L$ then there are $1 \leq m' \leq m$ and 
$s \in \{1,2,3\}$ so that 
\begin{equation} \label{kaurismm}
S_s(T_{m'}) \subset S_m \setminus \Big(\bigcup_{l=1}^{m-1} S_l \Big) 
\quad \text{and so} \quad 
\ell\Big(\bigcup_{l=1}^{m} S_l \Big) \geq \theta r + \ell\Big(\bigcup_{l=1}^{m-1} S_l \Big). 
\end{equation} 
Applying \eqref{kaurismm} and induction yields
\begin{equation}\label{frigu}
L \theta r \leq \ell \Big(\bigcup_{l=1}^{L} S_l \Big) \leq \ell(\partial B_k)=4 \delta \pi r.   
\end{equation}
Since \eqref{frigu} holds for all finite subcollections of $\mathcal{T}_k$ and $\theta$ depends only on $H$, the desired bound \eqref{cardbound} holds. The proof is complete. 
\end{proof}

\subsection{Proof of quasi-M\"obius invariance, Proposition \ref{Quasiinv}}
We will apply the following well-known estimate, see e.g. \cite[Proposition 10.8]{Hei:01}. 

\begin{lemma} \label{qsslemma}
Let $\nu\colon \overline{\mathbb{D}}(z_0,r) \to \nu(\overline{\mathbb{D}}(z_0,r))$ be $\eta$-quasisymmetric and let $A \subset \mathbb{D}(z_0,r)$ satisfy 
$$
\diam(\nu(A)) \geq \delta \min_{z \in \mathbb{S}(z_0,r)} |\nu(z)-\nu(z_0)|. 
$$ 
Then $\diam(A) \geq \delta'r$, where $\delta'$ depends only on $\delta$ and $\eta$. 
\end{lemma}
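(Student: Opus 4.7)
The plan is to prove the contrapositive: if $\diam(A)\leq t r$ for some small $t>0$ (to be chosen depending only on $\delta$ and $\eta$), then $\diam(\nu(A))<\delta\,m$, where $m:=\min_{z\in\mathbb{S}(z_0,r)}|\nu(z)-\nu(z_0)|$. Taking $t=\delta'$ small enough will then contradict the hypothesis $\diam(\nu(A))\geq \delta\,m$ and give the lemma.

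First I would fix points $a_1,a_2\in A$ with $|\nu(a_1)-\nu(a_2)|=\diam(\nu(A))$ and a point $z^*\in\mathbb{S}(z_0,r)$ realizing the minimum, so that $|\nu(z^*)-\nu(z_0)|=m$. A preliminary application of $\eta$-quasisymmetry with base point $z_0$, using the trivial inequality $|a_1-z_0|\leq r=|z^*-z_0|$, yields the useful estimate $|\nu(a_1)-\nu(z_0)|\leq \eta(1)\,m$.

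The main step is selecting a good comparison point for $a_1$. Since $|z^*-z_0|=r$, the triangle inequality forces $\max\{|a_1-z_0|,|a_1-z^*|\}\geq r/2$; let $p\in\{z_0,z^*\}$ be a maximizer. Under the contrapositive assumption $\diam(A)\leq t r$ we then have $|a_1-a_2|/|a_1-p|\leq 2t$, so $\eta$-quasisymmetry at $a_1$ produces
\[
|\nu(a_1)-\nu(a_2)|\leq \eta(2t)\,|\nu(a_1)-\nu(p)|.
\]
In either choice of $p$, the bound $|\nu(a_1)-\nu(p)|\leq (1+\eta(1))\,m$ follows from the preliminary estimate together with the triangle inequality (when $p=z^*$, write $|\nu(a_1)-\nu(z^*)|\leq |\nu(a_1)-\nu(z_0)|+m$). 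Hence $\diam(\nu(A))\leq \eta(2t)(1+\eta(1))\,m$.

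Choosing $t:=\tfrac{1}{2}\eta^{-1}\!\bigl(\tfrac{\delta}{2(1+\eta(1))}\bigr)$, which depends only on $\delta$ and $\eta$, makes the right-hand side strictly less than $\delta\,m$, contradicting the hypothesis and yielding the desired $\delta'=t$. There is no substantive obstacle here; the only care required is the case split forced by the fact that $a_1$ may a priori be close to either $z_0$ or $z^*$, which is what makes it necessary to keep both candidates $p\in\{z_0,z^*\}$ available as comparison points.
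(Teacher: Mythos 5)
Your proof is correct, and it is exactly the "straightforward application of quasisymmetry" that the paper invokes without writing out: the paper gives no argument for this lemma beyond that remark. The case split on whether $a_1$ is far from $z_0$ or from $z^*$ is indeed the one point requiring care, and you handle it properly.
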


Let $\Omega \subset \hatc$ be $H$-cospread and $\phi \colon \hatc \to \hatc$ an $\alpha$-quasi-M\"obius map. Let $\varphi \colon \hatc \to \hatc$ be a M\"obius transformation so that 
$g=\varphi\circ \phi$ fixes infinity. Testing quasi-M\"obius condition \eqref{Hool} with the quadruple $z_1,z_2,z_3,\infty$ shows that $g|_\mathbb{C}$ is $\alpha$-quasisymmetric. Therefore, since $\phi=\varphi^{-1}\circ g$ it suffices to prove the claim of Proposition~\ref{Quasiinv} for quasisymmetric maps and M\"obius transformations.

We fix $p \in \mathcal{C}_N(\phi(\Omega))$, $z_0 \in p \cap \mathbb{C}$, and $r\leq \diam(p)$. Our goal is to show that $p \cap \mathbb{D}(z_0,r)$ contains a quasitripod with diameter comparable to $r$, under the assumption that $\phi$ is a quasisymmetric map or a M\"obius transformation.

First, let $\phi$ be $\eta$-quasisymmetric and let $\ell=\min_{z \in \mathbb{S}(z_0,r)}|\nu(z)-\nu(z_0)|$, where $\nu=\phi^{-1}$. Since $\nu(p)$ is $H$-spread by assumption, there is an $H$-quasitripod $T \subset \mathbb{D}(\nu(z_0),\ell) \cap \nu(p)$ with 
$\diam(T) \geq \ell/H$. Then, since  compositions of quasisymmetric maps are quasisymmetric, $\phi(T) \subset p \cap \mathbb{D}(z_0,r)$ is an $H_1$-quasitripod, where $H_1$ depends only on $H$ and $\eta$. The inverse of a quasisymmetric map is a quasisymmetric map, and the control functions depend only on each other. Thus, Lemma \ref{qsslemma} shows that $\diam(\phi(T)) \geq r/H_2$, where $H_2$ depends only on $H$ and $\eta$. We conclude that $\phi(\Omega)$ is $(\max\{H_1,H_2\})$-cospread.   

We now show that $\phi(\Omega)$ is cospread when $\phi$ is a M\"obius transformation. If $\phi$ fixes infinity then the claim is obvious. It therefore suffices to prove the claim for the inversion $\phi(z)= z^{-1}$. The following lemma follows directly from the definition of quasisymmetry. 

\begin{lemma} \label{kissa}
Let $\phi(z)= z^{-1}$ and suppose that $s>0$ and $w_0 \in \mathbb{C}$ satisfy $|w_0| \geq 2s$. Then $\phi|_{\overline{\mathbb{D}}(w_0,s)}$ is $\eta$-quasisymmetric with $\eta(t)=3t$. 
\end{lemma}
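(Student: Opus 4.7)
The plan is to prove the lemma directly from the formula for the inversion. The key computation is that for any distinct $z, w \in \mathbb{C} \setminus \{0\}$,
\begin{equation*}
|\phi(z) - \phi(w)| = \left| \frac{1}{z} - \frac{1}{w} \right| = \frac{|z-w|}{|z||w|}.
\end{equation*}
Applying this to both $(z_1,z_2)$ and $(z_1,z_3)$ for any triple $z_1, z_2, z_3 \in \overline{\mathbb{D}}(w_0,s)$, the ratio we need to control is
\begin{equation*}
\frac{|\phi(z_2) - \phi(z_1)|}{|\phi(z_3) - \phi(z_1)|} = \frac{|z_2 - z_1|}{|z_3 - z_1|} \cdot \frac{|z_3|}{|z_2|}.
\end{equation*}

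The first factor is at most $t$ by hypothesis. The second factor is where the assumption $|w_0| \geq 2s$ enters: for any $z \in \overline{\mathbb{D}}(w_0,s)$ the triangle inequality gives
\begin{equation*}
\tfrac{1}{2}|w_0| \leq |w_0| - s \leq |z| \leq |w_0| + s \leq \tfrac{3}{2}|w_0|,
\end{equation*}
so $|z_3|/|z_2| \leq 3$. This is exactly the bound needed, since it is independent of the choice of $z_1$; in particular $0 \notin \overline{\mathbb{D}}(w_0,s)$ so $\phi$ is well-defined on the disk.

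Combining the two estimates yields $|\phi(z_2) - \phi(z_1)| \leq 3t\, |\phi(z_3) - \phi(z_1)|$, which is the desired $\eta$-quasisymmetry with $\eta(t) = 3t$. There is no real obstacle here: the lemma is essentially a bookkeeping exercise that records the fact that on a disk bounded away from the origin by a factor comparable to its radius, the inversion has bounded multiplicative distortion of absolute values, and the only numerical input beyond the identity for $|\phi(z)-\phi(w)|$ is the two-sided bound $|w_0|/2 \leq |z| \leq 3|w_0|/2$.
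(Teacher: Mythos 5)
Your proof is correct and is precisely the direct verification the paper leaves implicit (the paper states the lemma ``follows directly from the definition of quasisymmetry'' without writing out the computation). The identity $|\phi(z)-\phi(w)| = |z-w|/(|z||w|)$, the cancellation of $|z_1|$, and the two-sided bound $|w_0|/2 \leq |z| \leq 3|w_0|/2$ on the disk are exactly the ingredients needed, and they yield $\eta(t)=3t$ as claimed.
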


Now, if the point $z_0 \in p \cap \mathbb{C}$ above satisfies $|z_0| \geq r/10$ then $\phi^{-1}=\phi$ is quasisymmetric on $\mathbb{D}(z_0,r/20)$ by Lemma \ref{kissa}. On the other hand, if $|z_0| \leq r/10$ then we choose any $w_0 \in p \cap 
\mathbb{S}(z_0,r/2)$ (such a $w_0$ exists since $\diam(p) \geq r$) and notice that $|w_0| \geq r/10$. Lemma \ref{kissa} then shows that $h$ is quasisymmetric on $\mathbb{D}(w_0,r/20)\subset \mathbb{D}(z_0,r)$. 

Let $k_0=z_0$ if $|z_0| \geq r/10$ and $k_0=w_0$ otherwise. Since $\phi^{-1}(p)$ is spread by assumption, applying quasisymmetry and Lemma \ref{qsslemma} as above shows that  
$$
p \cap \mathbb{D}(k_0,r/20) \subset 
p \cap \mathbb{D}(z_0,r),
$$
and $p$ contains an $H'$-quasitripod with diameter bounded from below by $r/H'$, where $H'$ depends only on $H$. It follows that $p$ is $H'$-spread. The proofs of Propositions \ref{Quasiinv} and \ref{cospreadcon} are complete. 
\bibliographystyle{alpha}
\bibliography{Bibliography}
\end{document}